\pgfplotsset{compat=newest}
\newcommand{\TheTitle}{A DPG method for linear quadratic optimal control problems}
\newcommand{\ShortTitle}{DPG method for optimal control problems}
\newcommand{\TheAuthors}{T. F\"uhrer, F. Fuica}
\headers{\ShortTitle}{\TheAuthors}
\title{{\TheTitle}\thanks{TF is partially supported by ANID through FONDECYT Project 1210391. FF is supported by ANID through FONDECYT postdoctoral project 3230126.}}
\author{Thomas F\"uhrer\thanks{Facultad de Matem\'aticas, Pontificia Universidad Cat\'olica de Chile, Avenida Vicu\~{n}a Mackenna 4860, Santiago, Chile.
(\email{tofuhrer@mat.uc.cl}).}
\and
Francisco Fuica\thanks{Facultad de Matem\'aticas, Pontificia Universidad Cat\'olica de Chile, Avenida Vicu\~{n}a Mackenna 4860, Santiago, Chile.
(\email{francisco.fuica@mat.uc.cl}).}}
\date{Draft version of \today.}
\begin{document}

\maketitle

\begin{abstract}
  The DPG method with optimal test functions for solving linear quadratic optimal control problems with control constraints is studied. We prove existence of a unique optimal solution of the nonlinear discrete problem and characterize it through first order optimality conditions. Furthermore, we systematically develop a priori as well as a posteriori error estimates. Our proposed method can be applied to a wide range of constrained optimal control problems subject to, e.g., scalar second-order PDEs and the Stokes equations. Numerical experiments that illustrate our theoretical findings are presented.
\end{abstract}

\begin{keywords}
optimal control, discontinuous Petrov--Galerkin method, optimal test functions, finite elements, convergence, error estimates.
\end{keywords}

\begin{AMS}
49J20,   	   
49M25,		   
65N15,         
65N30.         
\end{AMS}


\section{Introduction}\label{sec:intro}

The analysis of formulations and approximation techniques for optimal control problems has been a matter of interest in the last decades. While problems without control constraints are fairly well established, the development of new approximation techniques for control problems with constraints is far from complete \cite{MR2516528}. The main source of difficulty of these control-constrained problems is its nonlinear feature inherited primarily by the considered restrictions. Various solution techniques have been proposed and analyzed in the literature. For an overview and an up-to-date discussion on this matter, we refer the reader to \cite{MR2516528,MR2583281,MR3308473} and references therein. 

The discontinuous Petrov-Galerkin method with optimal test-functions (DPG) is a class of minimal residual methods that approximates solutions of, e.g., PDEs, by minimizing residuals in dual norms of broken (Sobolev) spaces \cite{MR2630162,MR2743600,MR2837484,MR3521055}. One of the main goals for the development of the DPG method was to provide numerical methods that robustly control the error, i.e., independent of the perturbation parameter in singularly perturbed problems,~\cite{MR3095479,MR3279496}.  Another notable feature in the DPG setting is the possibility to analyze the use of different variational formulation where the most prominent example is the ultra-weak formulation. Due to the use of broken test spaces the DPG method provides a natural localized error estimator~\cite{MR3215064} that can be used to steer adaptive algorithms.
A posteriori error estimators are important tools in the analysis of finite element methods for optimal control problems, see, e.g.,~\cite{MR3212590} and references therein.

In contrast to the aforementioned advances and to the best of our knowledge, this is the first work that develops a framework for solving optimal control problems based on DPG methods.
Constraints on the control variable are included leading to nonlinear optimization problems.
Under the paradigm of \emph{first discretize, then optimize} \cite[chapter 3]{MR2516528} together with standard assumptions for the analysis of DPG methods, existence of continuous and discrete solutions is shown. A priori and a posteriori error estimates for the general framework are derived. We propose an a posteriori error estimator which, depending on the problem under consideration, requires to localize residuals similar as in~\cite{MR4090394} and~\cite{MR3980263}.
Our proposed method involves the discretization of a dual variable (adjoint state) and in context of minimal residual methods this may lead to reduced convergence rates as has been studied in, e.g.,~\cite{MR4334615}.
However, we prove optimal convergence under minimal regularity assumptions provided that the data is sufficiently smooth.
Here, as particular examples we consider optimal control problems subject to the Poisson equation and Stokes equations.
While we do not claim that our proposed method competes with existing finite element methods for these problems, we emphasize that the intention of the present article is to lay foundations for future work involving more challenging problems. 
In particular, we are interested in optimal control problems subject to singularly perturbed problems, see, e.g.~\cite{MR2647025,MR2495058}, or parabolic equations. Least-squares finite element methods for the latter problem have been recently considered in~\cite{LSQoptimal,OptControlHeatGS22}.

The outline of this work is as follows. We set notation and recall some preliminaries for the DPG method in section \ref{sec:not_and_prel}. In section \ref{sec:the_OCP}, we present the general linear quadratic optimal control problem \eqref{def:opt_cont_prob}. Moreover, we show existence of a unique optimal solution and optimality conditions. Section \ref{sec:discrete_approx} presents a finite element discretization based on the DPG method for problem  \eqref{def:opt_cont_prob}. We also prove a priori and a posteriori error estimates for the proposed discretization scheme. In section \ref{sec:examples}, we show how to apply the developed method for some particular problems. We end in section \ref{sec:num_examples}, where we provide some numerical examples that illustrates our theory.


\section{Preliminaries}\label{sec:not_and_prel}

In this section we introduce notation for spaces and operators used throughout this work.

\subsection{Notation for spaces}\label{sec:notation}

We shall use standard notation for Lebesgue and Sobolev spaces and their norms. Let $\mathfrak{X}$ and $\mathfrak{Y}$ be Hilbert spaces. We denote by $(\cdot,\cdot)_{\mathfrak{X}}$ and $\|\cdot\|_{\mathfrak{X}}$ the inner product and norm in $\mathfrak{X}$, respectively. We denote by $\mathcal{I}_{\mathfrak{X}}^{\mathfrak{X}'}$ the canonical isomorphism of $\mathfrak{X}$ onto $\mathfrak{X}'$.
Given a linear bounded operator $\mathfrak{B}\colon \mathfrak{X}\to\mathfrak{Y}$ we denote by $\mathfrak{B}^*\colon \mathfrak{Y}'\to \mathfrak{X}'$ its dual.
In slight abuse of notation, throughout this work, any bidual of a Hilbert space $\mathfrak{X}$ will be identified with $\mathfrak{X}$.


\subsection{Broken Sobolev spaces}\label{sec:broken_Sob_spaces}

In this section we introduce \emph{broken Sobolev spaces} and \emph{trace spaces}. To present these spaces, we consider  a bounded polytopal Lipschitz domain $\Omega\subset\mathbb{R}^{d}$ ($d\geq 2$) and a \emph{shape-regular} mesh $\mathscr{T}$ of open Lipschitz elements $T$ on $\Omega$ with boundaries $\partial T$, so that any two elements do not intersect and $\overline{\Omega} = \cup_{T\in\mathscr{T}}\overline{T}$. We thus introduce the infinite-dimensional (but mesh-dependent) spaces \cite[Section 2]{MR3521055}
\begin{equation*}
H^1(\mathscr{T}):=\prod_{T\in\mathscr{T}}H^1(T), \qquad H(\text{div},\mathscr{T}):=\prod_{T\in\mathscr{T}}H(\text{div},T),
\end{equation*}
with norms $\|\cdot\|_{H^1(\mathscr{T})}^2:=\sum_{T\in\mathscr{T}}\|\cdot\|_{H^1(T)}^2$ and $\|\cdot\|_{H(\text{div},\mathscr{T})}^2:=\sum_{T\in\mathscr{T}}\|\cdot\|_{H(\text{div},T)}^2$, respectively. 

For any $T\in\mathscr{T}$ we define two \emph{local trace operators}, namely, $\text{tr}_{T}^{\text{grad}}: H^1(T)\to H(\text{div},T)'$ and $\text{tr}_{T}^{\text{div}}:H(\text{div},T)\to H^1(T)'$ by
\begin{align*}
\langle \text{tr}_{T}^{\text{grad}}(z), \boldsymbol{\delta\!\tau}\rangle_{\partial T}:&=(z,\text{div }\boldsymbol{\delta\!\tau})_{L^2(T)} + (\nabla z, \boldsymbol{\delta\!\tau})_{L^2(T)} \quad \left(z\in H^1(T), ~ \boldsymbol{\delta\!\tau}\in H(\text{div},T)\right), \\
\langle \text{tr}_{T}^{\text{div}}(\boldsymbol{\tau}), \delta\!z\rangle_{\partial T}:&=(\boldsymbol{\tau},\nabla\delta\!z)_{L^2(T)} + (\text{div } \boldsymbol{\tau}, \delta\!z)_{L^2(T)} \quad \left(\boldsymbol{\tau}\in H(\text{div},T),~ \delta\!z\in H^1(T)\right).
\end{align*}
The corresponding (global) product versions of the previous trace operators are $\text{tr}^{\text{grad}}: H^1(\mathscr{T})\to H(\text{div},\mathscr{T})'$ and $\text{tr}^{\text{div}}:H(\text{div},\mathscr{T})\to H^1(\mathscr{T})'$, which are defined by
\begin{align*}
\langle \text{tr}^{\text{grad}}(z), \boldsymbol{\delta\!\tau}\rangle_{\partial\mathscr{T}}:&=\sum_{T\in\mathscr{T}}\langle \text{tr}_{T}^{\text{grad}}(z), \boldsymbol{\delta\!\tau}\rangle_{\partial T} \qquad \left(z\in H^1(\mathscr{T}),~\boldsymbol{\delta\!\tau}\in H(\text{div},\mathscr{T})\right), \\
\langle \text{tr}^{\text{div}}(\boldsymbol{\tau}), \delta\!z\rangle_{\partial \mathscr{T}}:&=\sum_{T\in\mathscr{T}}\langle \text{tr}_{T}^{\text{div}}(\boldsymbol{\tau}), \delta\!z\rangle_{\partial T} \qquad \left(\boldsymbol{\tau}\in H(\text{div},T)),~ \delta\!z\in H^1(T)\right).
\end{align*}
These operators give rise to the trace spaces
\begin{equation*}
H_{0}^{1/2}(\partial\mathscr{T}):=\text{tr}^{\text{grad}}(H_{0}^{1}(\Omega)), \quad H^{-1/2}(\partial\mathscr{T}):=\text{tr}^{\text{div}}(H(\text{div},\Omega)),
\end{equation*}
which are endowed, respectively, with the norms
\begin{align*}
\|\hat{z}\|_{1/2,\partial\mathscr{T}}:&=\inf \{\|z\|_{H^1(\Omega)}: z\in H^1(\Omega),~ \text{tr}^{\text{grad}}(z)=\hat{z}\},\\
\|\hat{\tau}\|_{-1/2,\partial\mathscr{T}}:&=\inf \{\|\boldsymbol\tau\|_{H(\text{div},\Omega)}: \boldsymbol\tau\in H(\text{div},\Omega),~ \text{tr}^{\text{div}}(\boldsymbol\tau)=\hat{\tau}\}.
\end{align*}

For further properties of the previous spaces, we refer the reader to \cite[Section 2]{MR3521055}.


\subsection{Approximation spaces}\label{sec:approximation_spaces}

Let $\Omega\subset\mathbb{R}^{d}$ ($d\geq 2$) and $\mathscr{T}$ be as in section \ref{sec:broken_Sob_spaces}. Here, we assume that each element $T\in\mathscr{T}$ is a simplex. Given $T\in \mathscr{T}$ and $k\in\mathbb{N}_{0}$, we introduce the space $\mathcal{P}^{k}(T):=\{z: T\to \mathbb{R} ; z \text{ is a polynomial of degree}\leq k\}$. Set
\begin{equation*}
\mathcal{P}^{k}(\mathscr{T}):=\{z\in L^2(\Omega) : z|_{T}\in \mathcal{P}^{k}(T) ~ \forall T\in\mathscr{T}\}.
\end{equation*}
Moreover, define the spaces
\begin{equation*}
\mathcal{P}_{\text{grad}}^{k+1}(\mathscr{T}):=\mathcal{P}^{k+1}(\mathscr{T})\cap H_0^1(\Omega), \qquad
\mathcal{P}_{\text{div}}^{k}(\mathscr{T}):=\mathbf{RT}^k(\mathscr{T})\cap H(\text{div},\Omega),
\end{equation*}
where $\mathbf{RT}^{k}(\mathscr{T})$ denotes the Raviart--Thomas finite element space of order $k$~\cite{MR0483555}. In addition, we define the spaces
\begin{equation*}
\mathcal{P}_{c,0}^{k+1}(\partial\mathscr{T}):=\text{tr}^{\text{grad}}(\mathcal{P}_{\text{grad}}^{k+1}(\mathscr{T})), \qquad \mathcal{P}^{k}(\partial\mathscr{T}):=\text{tr}^{\text{div}}(\mathcal{P}_{\text{div}}^{k}(\mathscr{T})).
\end{equation*}

We let $\mathcal{E}$ be the set of all mesh sides ($d$-faces) and let $\mathcal{E}_{\text{int}}$ be the set of all interior sides of $\mathscr{T}$. We denote the boundary elements of $T\in \mathscr{T}$ by $\mathcal{E}_{T}\subset \mathcal{E}$. Hence, for $\gamma\in\mathcal{E}_{T}$ with $T\in\mathscr{T}$, we define the spaces $\mathcal{P}^{k}(\gamma):=\{z: \gamma\to \mathbb{R} ; z \text{  is a polynomial of degree}\leq k\}$ and $\mathcal{P}^{k}(\mathcal{E}_{T}):=\{z\in L^2(\partial T) : z|_{\gamma}\in \mathcal{P}^{k}(\gamma) ~ \forall \gamma\in\mathcal{E}_{T}\}$.  

Given a discrete function $\boldsymbol\tau \in \mathcal{P}^{k}(\mathscr{T})^{d}$ with $k\in\mathbb{N}_{0}$, we define, for any internal side $\gamma \in \mathcal{E}_{\text{int}}$, the \emph{jump or interelement residual} 
\[
\llbracket \boldsymbol\tau\cdot \mathbf{n} \rrbracket:= \mathbf{n}^{+} \cdot \boldsymbol\tau|^{}_{T^{+}} + \mathbf{n}^{-} \cdot \boldsymbol\tau|^{}_{T^{-}},
\]
where $\mathbf{n} ^{+}, \mathbf{n} ^{-}$ denote the unit normals to $\gamma$ pointing outwards $T^{+}$, $T^{-} \in \mathscr{T}$, respectively; $T^{+}$, $T^{-} \in \mathscr{T}$ are such that $T^{+} \neq T^{-}$ and $\partial T^{+} \cap \partial T^{-} = \gamma$. If $\gamma\in\mathcal{E}\setminus\mathcal{E}_{\text{int}}$ is an exterior side on $\partial T$, for some $T\in\mathscr{T}$, then $\mathbf{n}$ corresponds to the outward unit normal on $\partial\Omega$ and we simply set $\llbracket \boldsymbol\tau\cdot \mathbf{n} \rrbracket:= \mathbf{n} \cdot \boldsymbol\tau|^{}_{T}$. Similarly, for any $z\in \mathcal{P}^{k}(\mathscr{T})$ that may be discontinuous across an interface $\gamma\in\mathcal{E}_{\text{int}}$, we define $\llbracket z\mathbf{n}\rrbracket := \mathbf{n}^{+} z|^{}_{T^{+}} + \mathbf{n}^{-}z|^{}_{T^{-}}$ and set $\llbracket z\mathbf{n}\rrbracket = z|_{T}\mathbf{n}$ on boundary sides $\gamma\in\mathcal{E}\setminus\mathcal{E}_{\text{int}}$.

Finally,  $\pi_{k}$ denotes the $L^2(\Omega)$-orthogonal projection onto $\mathcal{P}^k(\mathscr{T})$.


\section{The optimal control problem}\label{sec:the_OCP}

In this section we describe a general framework for linear quadratic optimal control problems with control constraints based on the DPG method.


\subsection{Problem formulation}

In order to present a general framework we introduce the following ingredients. Let $U,Y,V$, and $\mathcal{Y}$ be real Hilbert spaces. Let $U_{ad}$ be a non-empty, bounded, closed and convex subset of $U$. We consider a bounded bilinear form $b: Y\times V \to \mathbb{R}$ and, associated to it, the operator $B:Y\to V'$ defined by $\langle By,v\rangle_{V',V}=b(y,v)$. We also consider an \emph{observation} operator $O: Y \to \mathcal{Y}$ and a \emph{data} operator $D:U \to V'$; both operators being linear and bounded.

Given $\mathsf{y}_{d}\in \mathcal{Y}$ and a regularization parameter $\alpha>0$, we define the functional
\begin{equation*}
J(\mathsf{y},\mathsf{u}):=\frac{1}{2}\|O\mathsf{y} - \mathsf{y}_{d}\|_{\mathcal{Y}}^2 + \frac{\alpha}{2}\|\mathsf{u}\|_{U}^2.
\end{equation*}
The considered optimal control problem reads as follows:
\begin{equation}\label{def:opt_cont_prob}
\min\{J(\mathsf{y},\mathsf{u}) : (\mathsf{y},\mathsf{u})\in Y\times U_{ad}\} ~ \text{ subject to } ~ b(\mathsf{y},v) = \langle D\mathsf{u},v\rangle_{V',V} \quad \forall v\in V.
\end{equation}
We immediately note that the state equation appearing in \eqref{def:opt_cont_prob} can be equivalently rewritten in terms of operators as $B\mathsf{y} = D\mathsf{u}$ in $V'$.


\subsection{Existence of solution}

To study existence of an optimal solution for problem \eqref{def:opt_cont_prob}, we introduce the space $W=\{v\in V : b(y,v)=0  \text{ for all } y\in Y\}$. 
Suppose that the following $\inf$--$\sup$ conditions for bilinear form $b$ hold true: There exists a constant $C_{b}>0$ such that
\begin{equation}\label{eq:inf_sup_b}
  C_{b}\|y\|_{Y} \leq \sup_{v\in V}\frac{b(y,v)}{\|v\|_{V}} \quad \forall y\in Y \qquad\text{and}\qquad W=\{0\}.
\end{equation}
It is well known that, if \eqref{eq:inf_sup_b} holds, then the operator $B$ is an isomorphism \cite[Theorem 3.3]{MR163054}.
Thus, the control-to-state mapping $S:U\to Y$ given by $S\mathsf{u}:=B^{-1}D\mathsf{u}=\mathsf{y}$ is well defined.
In the next result we show existence of a unique optimal solution.

\begin{theorem}[existence of optimal solution]\label{thm:existence}
Assume that \eqref{eq:inf_sup_b} holds. Then, the optimal control problem \eqref{def:opt_cont_prob} admits a unique solution $(\bar{\mathsf{y}},\bar{\mathsf{u}})\in Y\times U_{ad}$.
\end{theorem}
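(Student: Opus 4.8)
The plan is to follow the classical reduced-functional approach. Since \eqref{eq:inf_sup_b} guarantees that $B\colon Y\to V'$ is an isomorphism, the control-to-state operator $S=B^{-1}D\colon U\to Y$ is a well-defined bounded linear map, and problem \eqref{def:opt_cont_prob} is equivalent to minimizing the \emph{reduced cost functional}
\begin{equation*}
j(\mathsf{u}):=J(S\mathsf{u},\mathsf{u})=\tfrac12\|OS\mathsf{u}-\mathsf{y}_{d}\|_{\mathcal{Y}}^2+\tfrac{\alpha}{2}\|\mathsf{u}\|_{U}^2
\end{equation*}
over $\mathsf{u}\in U_{ad}$; any minimizer $\bar{\mathsf{u}}$ then yields the pair $(\bar{\mathsf{y}},\bar{\mathsf{u}})$ with $\bar{\mathsf{y}}=S\bar{\mathsf{u}}$, and conversely.

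First I would record the structural properties of $j$. It is the sum of the square of an affine continuous map into $\mathcal{Y}$ and a positive multiple of $\|\cdot\|_{U}^2$; hence $j$ is continuous and convex on $U$, and in fact \emph{strictly} convex, because $\alpha>0$ renders $\mathsf{u}\mapsto\tfrac{\alpha}{2}\|\mathsf{u}\|_{U}^2$ strictly convex while the observation term is convex. A continuous convex functional on a Hilbert space is weakly lower semicontinuous, which is precisely the property needed for the direct method.

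Next I would invoke the direct method of the calculus of variations. The set $U_{ad}$ is nonempty, bounded, closed and convex, hence weakly sequentially compact (bounded in a Hilbert space gives weak relative compactness, and closed plus convex gives weak closedness via Mazur's lemma). Taking a minimizing sequence $(\mathsf{u}_{n})\subset U_{ad}$ for $j$, one extracts a subsequence $\mathsf{u}_{n_k}\rightharpoonup\bar{\mathsf{u}}\in U_{ad}$, and weak lower semicontinuity yields $j(\bar{\mathsf{u}})\le\liminf_{k}j(\mathsf{u}_{n_k})=\inf_{U_{ad}}j$, so $\bar{\mathsf{u}}$ is a minimizer. Uniqueness then follows from the strict convexity of $j$ together with the convexity of $U_{ad}$: if $\bar{\mathsf{u}}_1\ne\bar{\mathsf{u}}_2$ were two minimizers, the admissible midpoint $\tfrac12(\bar{\mathsf{u}}_1+\bar{\mathsf{u}}_2)$ would have strictly smaller cost, a contradiction.

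I do not anticipate a genuine obstacle here. The two points requiring care are the equivalence of \eqref{def:opt_cont_prob} with the reduced minimization problem — which rests entirely on the isomorphism property of $B$ already granted by \eqref{eq:inf_sup_b} — and the verification of weak lower semicontinuity of $j$; the latter is immediate once convexity and continuity are noted, and coercivity of $j$ on all of $U$ is not even needed because $U_{ad}$ is assumed bounded.
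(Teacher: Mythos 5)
Your proposal is correct and follows essentially the same route as the paper: reduce to the functional $j(\mathsf{u})=J(S\mathsf{u},\mathsf{u})$ using the isomorphism property of $B$ guaranteed by \eqref{eq:inf_sup_b}, obtain existence from convexity, continuity and the weak compactness of the nonempty, bounded, closed, convex set $U_{ad}$, and deduce uniqueness from the strict convexity supplied by $\alpha>0$. The only difference is that you spell out the direct-method details that the paper delegates to the cited references.
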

\begin{proof}
The reduced cost functional $j(\mathsf{u}):=J(S\mathsf{u},\mathsf{u})$ is convex and continuous. Since $U_{ad}$ is non-empty, bounded, closed and convex, we thus invoke standard arguments to conclude the existence of an optimal solution $(\bar{\mathsf{y}},\bar{\mathsf{u}})$; see, e.g., \cite[Chapter II, Theorem 1.2]{MR0271512} and \cite[Theorem 2.14]{MR2583281}. The uniqueness follows since $\alpha > 0$.
\end{proof}


\subsection{Optimality conditions}

We characterize the optimal control $\bar{\mathsf{u}}$ through the next first order optimality condition: $\bar{\mathsf{u}}\in U_{ad}$ uniquely solves problem \eqref{def:opt_cont_prob} if and only if $j'(\bar{\mathsf{u}})(\mathsf{u} - \bar{\mathsf{u}}) \geq 0$ for all $\mathsf{u}\in U_{ad}$ \cite[Lemma 2.21]{MR2583281}. Here, $j(\bar{\mathsf{u}})=J(S\bar{\mathsf{u}},\bar{\mathsf{u}})$ and $j'(\cdot)$ denotes the Gate\^aux derivative of $j(\cdot)$.

In order to rewrite the inequality $j'(\bar{\mathsf{u}})(\mathsf{u} - \bar{\mathsf{u}}) \geq 0$ in a more practical form, we introduce, given $\mathsf{u}\in U$, the \emph{adjoint state} $\mathsf{p}\in V$ as the unique solution to
\begin{equation}\label{eq:adjoint_eq}
B^{*}\mathsf{p} = O^{*}\mathcal{I}_{\mathcal{Y}}^{\mathcal{Y}'}(O\mathsf{y} - \mathsf{y}_{d}) ~\text{ in } ~ Y',
\end{equation}
where $\mathsf{y}=S\mathsf{u}$. The weak form of \eqref{eq:adjoint_eq} is: Find $(\lambda,\mathsf{p})\in Y\times V$ such that
\begin{equation}\label{eq:adj_eq_mixed}
(\mathsf{p},v)_{V} - b(\lambda,v) = 0, \quad 
b(w,\mathsf{p})  =  \langle O^{*}\mathcal{I}_{\mathcal{Y}}^{\mathcal{Y}'}(O\mathsf{y} - \mathsf{y}_{d}),w\rangle_{Y',Y} \quad \forall (v,w)\in V\times Y.
\end{equation}
We immediately note that the existence and uniqueness of $(\lambda,\mathsf{p})\in Y\times V$ also follow from the inf-sup condition \eqref{eq:inf_sup_b}. We stress that the second equation of \eqref{eq:adj_eq_mixed} is equivalent to \eqref{eq:adjoint_eq}, while the first equation is equivalent to $\mathcal{I}_{V}^{V'}\mathsf{p} = B\lambda$, i.e., $\lambda =  B^{-1}\mathcal{I}_{V}^{V'}\mathsf{p}$. The main reason to consider the weak form as in \eqref{eq:adj_eq_mixed} is that its discrete stability is guaranteed solely by a discrete inf-sup condition, which is relatively easy to obtain in practice through the construction of suitable Fortin operators \cite[Section 2.2]{MR4090394}.

We are now in position to provide first order optimality conditions.

\begin{theorem}[first order optimality conditions]\label{thm:1st_order_opt_cond}
Assume that \eqref{eq:inf_sup_b} holds. Let $(\bar{\mathsf{y}},\bar{\mathsf{u}})\in Y\times U_{ad}$ be the unique solution to problem \eqref{def:opt_cont_prob}. Then, the optimal control $\bar{\mathsf{u}}\in U_{ad}$ satisfies the inequality
\begin{equation}\label{eq:var_ineq}
(\mathcal{I}_{U'}^{U}D^{*}\bar{\mathsf{p}} + \alpha \bar{\mathsf{u}}, \mathsf{u} - \bar{\mathsf{u}})_{U} \geq 0 \quad \forall \mathsf{u}\in U_{ad},
\end{equation}
where $\bar{\mathsf{p}}\in V$ denotes the unique solution to \eqref{eq:adjoint_eq} \textnormal{(}equivalently \eqref{eq:adj_eq_mixed}\textnormal{)} with $\mathsf{y}=\bar{\mathsf{y}}=S\bar{\mathsf{u}}$.
\end{theorem}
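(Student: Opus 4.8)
The plan is to derive \eqref{eq:var_ineq} directly from the variational characterization of the minimizer of the reduced functional, combined with a chain-rule computation of its derivative. Under \eqref{eq:inf_sup_b} the control-to-state map $S$ is well defined, so the reduced cost $j(\mathsf{u}):=J(S\mathsf{u},\mathsf{u})=\tfrac12\|OS\mathsf{u}-\mathsf{y}_d\|_{\mathcal{Y}}^2+\tfrac\alpha2\|\mathsf{u}\|_U^2$ is a quadratic functional on $U$, hence convex and G\^ateaux differentiable. By the discussion preceding the theorem (see \cite[Lemma 2.21]{MR2583281}) together with Theorem \ref{thm:existence}, the minimizer $\bar{\mathsf{u}}$ of \eqref{def:opt_cont_prob} is characterized by $j'(\bar{\mathsf{u}})(\mathsf{u}-\bar{\mathsf{u}})\ge 0$ for all $\mathsf{u}\in U_{ad}$, so the whole task reduces to rewriting $j'(\bar{\mathsf{u}})$ in the asserted form.

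First I would differentiate: using linearity and boundedness of $S$ and $O$, a direct computation yields, for any $\mathsf{u},\mathsf{h}\in U$,
\[
  j'(\mathsf{u})\mathsf{h} = (OS\mathsf{u}-\mathsf{y}_d,\,OS\mathsf{h})_{\mathcal{Y}} + \alpha(\mathsf{u},\mathsf{h})_U .
\]
Then I would rewrite the observation term via the adjoint variable. With $\bar{\mathsf{y}}=S\bar{\mathsf{u}}$ and $\bar{\mathsf{p}}\in V$ the unique solution of \eqref{eq:adjoint_eq} (uniqueness from \eqref{eq:inf_sup_b}), the Riesz isomorphism and the adjoints of $O$ and $B$ give
\[
  (O\bar{\mathsf{y}}-\mathsf{y}_d,\,OS\mathsf{h})_{\mathcal{Y}} = \langle O^{*}\mathcal{I}_{\mathcal{Y}}^{\mathcal{Y}'}(O\bar{\mathsf{y}}-\mathsf{y}_d),\,S\mathsf{h}\rangle_{Y',Y} = \langle B^{*}\bar{\mathsf{p}},\,S\mathsf{h}\rangle_{Y',Y} = b(S\mathsf{h},\bar{\mathsf{p}}) .
\]
Since $S\mathsf{h}=B^{-1}D\mathsf{h}$, we have $b(S\mathsf{h},\bar{\mathsf{p}}) = \langle B S\mathsf{h},\bar{\mathsf{p}}\rangle_{V',V} = \langle D\mathsf{h},\bar{\mathsf{p}}\rangle_{V',V} = \langle D^{*}\bar{\mathsf{p}},\mathsf{h}\rangle_{U',U} = (\mathcal{I}_{U'}^{U}D^{*}\bar{\mathsf{p}},\mathsf{h})_U$. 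Substituting $\mathsf{h}=\mathsf{u}-\bar{\mathsf{u}}$ and collecting terms gives $j'(\bar{\mathsf{u}})(\mathsf{u}-\bar{\mathsf{u}}) = (\mathcal{I}_{U'}^{U}D^{*}\bar{\mathsf{p}} + \alpha\bar{\mathsf{u}},\,\mathsf{u}-\bar{\mathsf{u}})_U$, so that $j'(\bar{\mathsf{u}})(\mathsf{u}-\bar{\mathsf{u}})\ge 0$ is precisely \eqref{eq:var_ineq}.

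I do not anticipate a genuine obstacle; the argument is a short computation. The only point requiring care is the bookkeeping of the dual pairings and of the canonical (Riesz) identifications — in particular, the identification of biduals with the original spaces that lets $B^{*}$ be regarded as a map $V\to Y'$ and $D^{*}$ as a map $V\to U'$, and the consistency between \eqref{eq:adjoint_eq} and its mixed form \eqref{eq:adj_eq_mixed}. I would also remark explicitly that convexity of $j$ is what makes the variational inequality both necessary and sufficient for optimality (matching the "if and only if" invoked above), and that $\bar{\mathsf{p}}$ is uniquely determined because \eqref{eq:adjoint_eq}, equivalently \eqref{eq:adj_eq_mixed}, is uniquely solvable under \eqref{eq:inf_sup_b}.
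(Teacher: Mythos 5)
Your proposal is correct and follows essentially the same route as the paper: both reduce the claim to $j'(\bar{\mathsf{u}})(\mathsf{u}-\bar{\mathsf{u}})\ge 0$ and then rewrite the observation term through the identical chain $O^{*}\mathcal{I}_{\mathcal{Y}}^{\mathcal{Y}'}(O\bar{\mathsf{y}}-\mathsf{y}_d)=B^{*}\bar{\mathsf{p}}$, the state equation $B(\mathsf{y}-\bar{\mathsf{y}})=D(\mathsf{u}-\bar{\mathsf{u}})$, and the Riesz identification to land on $(\mathcal{I}_{U'}^{U}D^{*}\bar{\mathsf{p}},\mathsf{u}-\bar{\mathsf{u}})_U$. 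The only cosmetic difference is that you compute $j'$ explicitly at a general point before substituting $\mathsf{h}=\mathsf{u}-\bar{\mathsf{u}}$, whereas the paper states the rewritten inequality directly.
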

\begin{proof}
A simple computation shows that we can rewrite, for all $\mathsf{u}\in U_{ad}$, the inequality $j'(\bar{\mathsf{u}})(\mathsf{u} - \bar{\mathsf{u}}) \geq 0$ as follows
\begin{equation}\label{eq:var_ineq_I}
(O(\mathsf{y}-\bar{\mathsf{y}}),O\bar{\mathsf{y}}-\mathsf{y}_{d})_{\mathcal{Y}} + \alpha(\bar{\mathsf{u}},\mathsf{u} - \bar{\mathsf{u}})_{U} \geq 0.
\end{equation}
Let us concentrate on the first term on the left-hand side of \eqref{eq:var_ineq_I}. Invoke the adjoint equation \eqref{eq:adjoint_eq} and the fact that $B(\mathsf{y}-\bar{\mathsf{y}})=D(\mathsf{u}-\bar{\mathsf{u}})$ in $V'$, to obtain
\begin{multline*}
(O(\mathsf{y}-\bar{\mathsf{y}}),O\bar{\mathsf{y}}-\mathsf{y}_{d})_{\mathcal{Y}} 
=
\langle\mathsf{y}-\bar{\mathsf{y}},O^{*}\mathcal{I}_{\mathcal{Y}}^{\mathcal{Y}'}(O\bar{\mathsf{y}} - \mathsf{y}_{d})\rangle_{Y,Y'}
= 
\langle\mathsf{y}-\bar{\mathsf{y}},B^{*}\bar{\mathsf{p}}\rangle_{Y,Y'} \\
=
\langle B(\mathsf{y}-\bar{\mathsf{y}}),\bar{\mathsf{p}}\rangle_{V',V} 
=
\langle D(\mathsf{u}-\bar{\mathsf{u}}),\bar{\mathsf{p}}\rangle_{V',V} 
=
\langle \mathsf{u}-\bar{\mathsf{u}},D^{*}\bar{\mathsf{p}}\rangle_{U,U'} 
=
(\mathsf{u}-\bar{\mathsf{u}},\mathcal{I}_{U'}^{U}D^{*}\bar{\mathsf{p}})_{U}.
\end{multline*}
Using $(O(\mathsf{y}-\bar{\mathsf{y}}),O\bar{\mathsf{y}}-\mathsf{y}_{d})_{\mathcal{Y}}=
(\mathsf{u}-\bar{\mathsf{u}},\mathcal{I}_{U'}^{U}D^{*}\bar{\mathsf{p}})_{U}$ in \eqref{eq:var_ineq_I} concludes the proof.
\end{proof}


\section{Discrete approximation}\label{sec:discrete_approx}

In this section we analyze the practical DPG method for the optimal control problem \eqref{def:opt_cont_prob}. To accomplish this task we introduce the following ingredients. Given a finite dimensional subspace $V_{h} \subseteq V$, we define the discrete trial-to-test operator $\Theta_{h}: Y \to V_{h}$ by
\begin{equation*}
(\Theta_{h}\mathsf{y}, v_{h})_{V} = b(\mathsf{y}, v_{h}) \quad \forall v_{h}\in V_{h},~ \mathsf{y}\in Y.
\end{equation*}
Let $Y_{h}\subseteq Y$ be some finite dimensional subspace. We define the discrete optimal test space $V_{h}^{\Theta_{h}}:=\Theta_{h}(Y_{h})$. Additionally, let $U_{h}\subseteq U$ denote a -not necessarily discrete- subspace with $U_{ad,h}:=U_{h}\cap U_{ad} \neq \emptyset$. We immediately mention that, if the variational discretization approach \cite{MR2122182} is considered, then $U_{h}=U$.

Finally, we recall the definition of a Fortin operator.
\begin{definition}[Fortin operator]\label{def:fortin_op} 
Let $\Pi: V \to V_{h}$. We say that $\Pi$ is a \emph{Fortin} operator if there exists $C_{\Pi}> 0$ such that
\begin{equation*}
\|\Pi v\|_{V} \leq C_{\Pi}\|v\|_{V} \quad \textnormal{ and } \quad b(\mathsf{y}_{h}, v - \Pi v) = 0 \quad \forall \mathsf{y}_{h}\in Y_{h}, ~ \forall v\in V.
\end{equation*}
\end{definition}


\subsection{Discrete optimal control problem}\label{sec:discrete_ocp}

The fully discrete DPG method for optimal control problem \eqref{def:opt_cont_prob} reads as follows:
\begin{equation}\label{def:discrete_opt_cont_prob_i}
\min\{J(\mathsf{y}_{h},\mathsf{u}_{h}) : (\mathsf{y}_{h},\mathsf{u}_{h})\in Y_{h}\times U_{ad,h}\}
\end{equation}
subject to the discrete state equation
\begin{equation}\label{def:discrete_opt_cont_prob_ii}
b(\mathsf{y}_{h},v_{h}) = \langle D\mathsf{u}_{h},v_{h}\rangle_{V',V} \quad \forall v_{h}\in V_{h}^{\Theta_{h}}.
\end{equation}

\begin{remark}[equivalent formulation]
We notice that the discrete state equation \eqref{def:discrete_opt_cont_prob_ii} can be reformulated as follows: Find $(\mathsf{y}_{h},\varepsilon_{h})\in Y_{h}\times V_{h}$ such that
\begin{equation}\label{eq:equivalent_discrete_st_eq}
(\varepsilon_{h},v_{h})_{V} + b(\mathsf{y}_{h},v_{h}) = \langle D\mathsf{u}_{h},v_{h}\rangle_{V',V}, \quad b(w_{h},\varepsilon_{h})  =  0  \quad \forall (v_{h},w_{h})\in V_{h}\times Y_{h}.
\end{equation}
In what follows, we shall consider formulation \eqref{eq:equivalent_discrete_st_eq} for the discrete state equation.
\end{remark}

We now present the existence of a unique optimal discrete solution.

\begin{theorem}[existence of discrete solution]\label{thm:existence_discrete}
Assume that \eqref{eq:inf_sup_b} holds and that $\Pi: V \to V_{h}$ is a Fortin operator. Then, the discrete optimal control problem \eqref{def:discrete_opt_cont_prob_i}--\eqref{def:discrete_opt_cont_prob_ii} admits a unique solution $(\bar{\mathsf{y}}_{h},\bar{\mathsf{u}}_{h})\in Y_{h}\times U_{ad,h}$.
\end{theorem}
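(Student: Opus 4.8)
The plan is to mimic the continuous existence proof (Theorem~\ref{thm:existence}) at the discrete level, the only subtlety being that the discrete state equation is posed with the optimal test space $V_h^{\Theta_h}$ rather than with $V_h$ itself, so one must first establish that the discrete control-to-state map is well defined. First I would show that, under \eqref{eq:inf_sup_b} and the existence of a Fortin operator $\Pi$, the discrete inf--sup condition
\begin{equation*}
\widetilde{C}_b \|\mathsf{y}_h\|_Y \leq \sup_{v_h\in V_h}\frac{b(\mathsf{y}_h,v_h)}{\|v_h\|_V} \quad \forall\,\mathsf{y}_h\in Y_h
\end{equation*}
holds with $\widetilde{C}_b = C_b/C_\Pi$: indeed, for $\mathsf{y}_h\in Y_h$ pick $v\in V$ with $b(\mathsf{y}_h,v)\geq C_b\|\mathsf{y}_h\|_Y\|v\|_V$, and use $b(\mathsf{y}_h,\Pi v)=b(\mathsf{y}_h,v)$ together with $\|\Pi v\|_V\leq C_\Pi\|v\|_V$. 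This is the standard DPG stability argument and I would cite \cite[Section 2.2]{MR4090394} or \cite{MR3521055} for it.

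Next I would argue that, by definition of $\Theta_h$, for $v_h=\Theta_h\mathsf{z}_h$ with $\mathsf{z}_h\in Y_h$ one has $(\Theta_h\mathsf{y}_h,\Theta_h\mathsf{z}_h)_V = b(\mathsf{y}_h,\Theta_h\mathsf{z}_h)$, so testing \eqref{def:discrete_opt_cont_prob_ii} against the optimal test space is equivalent to requiring $(\Theta_h\mathsf{y}_h,v_h)_V=\langle D\mathsf{u}_h,v_h\rangle_{V',V}$ for all $v_h\in V_h^{\Theta_h}$; equivalently, $\sup_{v_h\in V_h}b(\mathsf{y}_h,v_h)/\|v_h\|_V=\|\Theta_h\mathsf{y}_h\|_V$, and the discrete inf--sup condition gives $\|\Theta_h\mathsf{y}_h\|_V\geq \widetilde{C}_b\|\mathsf{y}_h\|_Y$, i.e. $\Theta_h$ is injective on $Y_h$. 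Since $\dim V_h^{\Theta_h}=\dim Y_h$, for each $\mathsf{u}_h\in U_h$ the square linear system \eqref{def:discrete_opt_cont_prob_ii} (equivalently the mixed system \eqref{eq:equivalent_discrete_st_eq}) has a unique solution $\mathsf{y}_h=:S_h\mathsf{u}_h\in Y_h$, and $S_h:U_h\to Y_h$ is linear and bounded with $\|S_h\mathsf{u}_h\|_Y\leq \widetilde{C}_b^{-1}\|D\|\,\|\mathsf{u}_h\|_U$.

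With $S_h$ in hand, the discrete reduced functional $j_h(\mathsf{u}_h):=J(S_h\mathsf{u}_h,\mathsf{u}_h)$ is, just as in the continuous case, convex (a sum of squared norms composed with affine maps) and continuous on $U_h$. Since $U_{ad,h}=U_h\cap U_{ad}$ is nonempty by assumption, and it is closed and convex (intersection of the closed convex set $U_{ad}$ with the closed subspace $U_h$) and bounded (as a subset of the bounded set $U_{ad}$), the same standard result used for Theorem~\ref{thm:existence} — \cite[Chapter II, Theorem 1.2]{MR0271512} or \cite[Theorem 2.14]{MR2583281} — yields a minimizer $(\bar{\mathsf{y}}_h,\bar{\mathsf{u}}_h)\in Y_h\times U_{ad,h}$, and uniqueness follows from $\alpha>0$, which makes $j_h$ strictly convex in $\mathsf{u}_h$.

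I do not expect a genuine obstacle here; the one point that needs care rather than difficulty is justifying the equivalence between \eqref{def:discrete_opt_cont_prob_ii} and the mixed form \eqref{eq:equivalent_discrete_st_eq} and using it to see that the constraint set $\{(\mathsf{y}_h,\mathsf{u}_h):\eqref{def:discrete_opt_cont_prob_ii}\text{ holds}\}$ is exactly the graph of the linear operator $S_h$, so that the constrained problem reduces to minimizing $j_h$ over $U_{ad,h}$. Everything else is a verbatim repeat of the continuous argument with $C_b$ replaced by $\widetilde{C}_b=C_b/C_\Pi$.
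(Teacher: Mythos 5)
Your proposal is correct and follows essentially the same route as the paper: the paper simply cites the standard DPG well-posedness result (\cite[Theorem 2.1]{MR3143683}, \cite[Theorem 4.2]{MR3521055}) for the discrete state equation and then refers back to the argument of Theorem~\ref{thm:existence}, whereas you spell out the discrete inf--sup condition $\widetilde{C}_b=C_b/C_\Pi$ and the resulting stability of $S_h$ explicitly. The details you supply (injectivity of $\Theta_h$ on $Y_h$, the square system, convexity of $j_h$, uniqueness from $\alpha>0$) are exactly what those citations encapsulate, so there is no substantive difference.
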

\begin{proof}
The assumptions immediately yield that problem \eqref{def:discrete_opt_cont_prob_ii}, equivalently \eqref{eq:equivalent_discrete_st_eq}, is well posed \cite[Theorem 2.1]{MR3143683}; see also \cite[Theorem 4.2]{MR3521055}. We can thus follow the arguments elaborated in the proof of Theorem \ref{thm:existence}. For brevity, we skip details.
\end{proof} 

As in the continuous case, we characterize the discrete optimal control $\bar{\mathsf{u}}_{h}$ through a first order optimality condition. To present it, we introduce the operator $S_{h}:U_{h} \to Y_{h}$ defined by $S_{h}\mathsf{u}_{h}=\mathsf{y}_{h}$, with $\mathsf{y}_{h}$ solution to \eqref{def:discrete_opt_cont_prob_ii}. The optimality condition reads: $\bar{\mathsf{u}}_{h}\in U_{ad,h}$ is optimal for \eqref{def:discrete_opt_cont_prob_i}--\eqref{def:discrete_opt_cont_prob_ii} if and only if it satisfies $j_{h}^{\prime}(\bar{\mathsf{u}}_{h})(\mathsf{u}_{h} - \bar{\mathsf{u}}_{h}) \geq 0$ for all $\mathsf{u}_{h}\in U_{ad,h}$, with $j_{h}(\bar{\mathsf{u}}_{h}):=J(S_{h}\bar{\mathsf{u}}_{h},\bar{\mathsf{u}}_{h})$. In order to rewrite this inequality, we introduce the discrete adjoint variable $(\lambda_{h},\mathsf{p}_{h})\in Y_{h}\times V_{h}$ as the unique solution to 
\begin{equation}\label{eq:discrete_adj_eq}
(\mathsf{p}_{h},v_{h})_{V} - b(\lambda_{h},v_{h}) = 0, \quad 
b(w_{h},\mathsf{p}_{h})  =  \langle O^{*}\mathcal{I}_{\mathcal{Y}}^{\mathcal{Y}'}(O\mathsf{y}_{h} - \mathsf{y}_{d}),w_{h}\rangle_{Y',Y} 
\end{equation}
for all $(v_{h},w_{h})\in V_{h}\times Y_{h}$.

We are now in position to provide discrete first order optimality conditions.

\begin{theorem}[discrete optimality conditions]\label{thm:discrete_1st_order_opt_cond}
Assume that \eqref{eq:inf_sup_b} holds and that $\Pi: V \to V_{h}$ is a Fortin operator. Let $(\bar{\mathsf{y}}_{h},\bar{\mathsf{u}}_{h})\in Y_{h}\times U_{ad,h}$ be the unique solution to problem \eqref{def:discrete_opt_cont_prob_i}--\eqref{def:discrete_opt_cont_prob_ii}. Then, the optimal control $\bar{\mathsf{u}}_{h}\in U_{ad,h}$ satisfies the inequality
\begin{equation}\label{eq:discrete_var_ineq}
(\mathcal{I}_{U'}^{U}D^{*}\bar{\mathsf{p}}_{h} + \alpha \bar{\mathsf{u}}_{h}, \mathsf{u}_{h} - \bar{\mathsf{u}}_{h})_{U} \geq 0 \quad \forall \mathsf{u}_{h}\in U_{ad,h},
\end{equation}
where $\bar{\mathsf{p}}_{h}\in V_{h}$ denotes the unique solution to \eqref{eq:discrete_adj_eq} with $\mathsf{y}_{h}=\bar{\mathsf{y}}_{h}$.
\end{theorem}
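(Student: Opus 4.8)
The plan is to mirror the proof of the continuous first order optimality conditions (Theorem~\ref{thm:1st_order_opt_cond}), carefully tracking which identities survive in the discrete setting. The starting point is the characterizing variational inequality $j_h'(\bar{\mathsf{u}}_h)(\mathsf{u}_h - \bar{\mathsf{u}}_h) \geq 0$ for all $\mathsf{u}_h \in U_{ad,h}$, which holds by convexity of $j_h$ (inherited from the fact that $S_h$ is affine linear and $J$ is convex) together with the standard argument for minimizing a Gate\^aux-differentiable convex functional over a closed convex set. A direct computation of the Gate\^aux derivative, exactly as in the continuous case, rewrites this as
\begin{equation*}
(O(\mathsf{y}_h-\bar{\mathsf{y}}_h),O\bar{\mathsf{y}}_h-\mathsf{y}_{d})_{\mathcal{Y}} + \alpha(\bar{\mathsf{u}}_h,\mathsf{u}_h - \bar{\mathsf{u}}_h)_{U} \geq 0,
\end{equation*}
where $\mathsf{y}_h := S_h \mathsf{u}_h$ and $\bar{\mathsf{y}}_h := S_h \bar{\mathsf{u}}_h$; here I use that $S_h$ is well defined, which follows from the well-posedness of \eqref{def:discrete_opt_cont_prob_ii} guaranteed by the Fortin operator hypothesis (as in Theorem~\ref{thm:existence_discrete}).

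The heart of the matter is to convert the observation term $(O(\mathsf{y}_h-\bar{\mathsf{y}}_h),O\bar{\mathsf{y}}_h-\mathsf{y}_{d})_{\mathcal{Y}}$ into $(\mathsf{u}_h-\bar{\mathsf{u}}_h,\mathcal{I}_{U'}^{U}D^{*}\bar{\mathsf{p}}_h)_{U}$. First rewrite it as $\langle \mathsf{y}_h-\bar{\mathsf{y}}_h, O^*\mathcal{I}_{\mathcal{Y}}^{\mathcal{Y}'}(O\bar{\mathsf{y}}_h - \mathsf{y}_d)\rangle_{Y_h,Y_h'}$; since $\mathsf{y}_h - \bar{\mathsf{y}}_h \in Y_h$, the right-hand side of the second equation in \eqref{eq:discrete_adj_eq} (with $\mathsf{y}_h=\bar{\mathsf{y}}_h$) yields $\langle \mathsf{y}_h-\bar{\mathsf{y}}_h, O^*\mathcal{I}_{\mathcal{Y}}^{\mathcal{Y}'}(O\bar{\mathsf{y}}_h - \mathsf{y}_d)\rangle = b(\mathsf{y}_h - \bar{\mathsf{y}}_h, \bar{\mathsf{p}}_h)$. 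The next step is the crucial one: I need $b(\mathsf{y}_h - \bar{\mathsf{y}}_h, \bar{\mathsf{p}}_h) = \langle D(\mathsf{u}_h - \bar{\mathsf{u}}_h), \bar{\mathsf{p}}_h\rangle_{V',V}$. This is where discreteness bites, because the discrete state equation \eqref{def:discrete_opt_cont_prob_ii} only tests against $V_h^{\Theta_h} = \Theta_h(Y_h)$, not all of $V_h$, and $\bar{\mathsf{p}}_h \in V_h$ need not lie in $V_h^{\Theta_h}$. The resolution is to work with the equivalent mixed formulation \eqref{eq:equivalent_discrete_st_eq}: subtracting the first equations for $(\mathsf{y}_h,\varepsilon_h)$ and $(\bar{\mathsf{y}}_h,\bar{\varepsilon}_h)$ and testing with $v_h = \bar{\mathsf{p}}_h \in V_h$ gives $(\varepsilon_h - \bar{\varepsilon}_h, \bar{\mathsf{p}}_h)_V + b(\mathsf{y}_h - \bar{\mathsf{y}}_h, \bar{\mathsf{p}}_h) = \langle D(\mathsf{u}_h - \bar{\mathsf{u}}_h), \bar{\mathsf{p}}_h\rangle_{V',V}$. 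It remains to kill the term $(\varepsilon_h - \bar{\varepsilon}_h, \bar{\mathsf{p}}_h)_V$: use the first equation of the discrete adjoint system \eqref{eq:discrete_adj_eq}, namely $(\bar{\mathsf{p}}_h, v_h)_V = b(\lambda_h, v_h)$ for all $v_h \in V_h$, with $v_h = \varepsilon_h - \bar{\varepsilon}_h \in V_h$, to get $(\varepsilon_h - \bar{\varepsilon}_h, \bar{\mathsf{p}}_h)_V = b(\lambda_h, \varepsilon_h - \bar{\varepsilon}_h)$, and then the second equation of \eqref{eq:equivalent_discrete_st_eq} (which says $b(w_h, \varepsilon_h) = 0 = b(w_h, \bar{\varepsilon}_h)$ for all $w_h \in Y_h$, and $\lambda_h \in Y_h$) shows this vanishes. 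Hence $b(\mathsf{y}_h - \bar{\mathsf{y}}_h, \bar{\mathsf{p}}_h) = \langle D(\mathsf{u}_h - \bar{\mathsf{u}}_h), \bar{\mathsf{p}}_h\rangle_{V',V}$.

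From there the remaining manipulations are the same dualities as in the continuous proof: $\langle D(\mathsf{u}_h-\bar{\mathsf{u}}_h),\bar{\mathsf{p}}_h\rangle_{V',V} = \langle \mathsf{u}_h-\bar{\mathsf{u}}_h, D^*\bar{\mathsf{p}}_h\rangle_{U,U'} = (\mathsf{u}_h-\bar{\mathsf{u}}_h, \mathcal{I}_{U'}^U D^*\bar{\mathsf{p}}_h)_U$. Substituting back into the variational inequality yields \eqref{eq:discrete_var_ineq}. I expect the only genuinely delicate point to be the one just described — showing that testing the mixed state equation against the discrete adjoint $\bar{\mathsf{p}}_h$ reproduces the expected $b(\cdot,\cdot)$ pairing despite $\bar{\mathsf{p}}_h \notin V_h^{\Theta_h}$ in general — and the key is that the auxiliary variable $\varepsilon_h$ is $b$-orthogonal to all of $Y_h$ while $\lambda_h \in Y_h$, so the cross term drops. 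Everything else is bookkeeping with the canonical isomorphisms and adjoints, identical in form to Theorem~\ref{thm:1st_order_opt_cond}.
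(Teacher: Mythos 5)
Your proposal is correct and follows essentially the same route as the paper: rewrite $j_h'(\bar{\mathsf{u}}_h)(\mathsf{u}_h-\bar{\mathsf{u}}_h)\geq 0$, pass the observation term through the discrete adjoint system \eqref{eq:discrete_adj_eq} and the mixed form \eqref{eq:equivalent_discrete_st_eq} of the state equation, and cancel the cross term $(\varepsilon_h-\bar{\varepsilon}_h,\bar{\mathsf{p}}_h)_V$ using the first adjoint equation together with $b(w_h,\varepsilon_h-\bar{\varepsilon}_h)=0$ for $w_h\in Y_h$. If anything, you spell out more carefully than the paper why that cross term vanishes, which the paper only asserts as a fact.
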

\begin{proof}
Observe that $j_{h}^{\prime}(\bar{\mathsf{u}}_{h})(\mathsf{u}_{h} - \bar{\mathsf{u}}_{h}) \geq 0$ for all $\mathsf{u}_{h}\in U_{ad,h}$ can be rewritten as
\begin{equation*}
(O(\mathsf{y}_{h}-\bar{\mathsf{y}}_{h}),O\bar{\mathsf{y}}_{h}-\mathsf{y}_{d})_{\mathcal{Y}} + \alpha(\bar{\mathsf{u}}_{h},\mathsf{u}_{h} - \bar{\mathsf{u}}_{h})_{U} \geq 0.
\end{equation*}
Let us concentrate on $(O(\mathsf{y}_{h}-\bar{\mathsf{y}}_{h}),O\bar{\mathsf{y}}_{h}-\mathsf{y}_{d})_{\mathcal{Y}}$. We notice that $(\mathsf{y}_{h} - \bar{\mathsf{y}}_{h},\varepsilon_{h} - \bar{\varepsilon}_{h})\in Y_{h}\times V_{h}$ solves the discrete problem
\begin{equation}\label{eq:aux_y-bary}
(\varepsilon_{h} - \bar{\varepsilon}_{h},v_{h})_{V} + b(\mathsf{y}_{h} - \bar{\mathsf{y}}_{h},v_{h}) = \langle D(\mathsf{u}_{h} - \bar{\mathsf{u}}_{h}),v_{h}\rangle_{V',V}, \quad b(w_{h},\varepsilon_{h} - \bar{\varepsilon}_{h})  =  0
\end{equation}
for all $(v_{h},w_{h})\in V_{h}\times Y_{h}$. We can thus use $(v_{h},w_{h})=(\bar{\mathsf{p}}_{h},\bar{\lambda}_{h})$ in \eqref{eq:aux_y-bary},  $(v_{h},w_{h})=(\varepsilon_{h} - \bar{\varepsilon}_{h},\mathsf{y}_{h} - \bar{\mathsf{y}}_{h})$ in \eqref{eq:discrete_adj_eq}, and the fact that $(\varepsilon_{h}-\bar{\varepsilon}_{h},\bar{\mathsf{p}}_{h})_{V}=0$. These arguments yield $(O(\mathsf{y}_{h}-\bar{\mathsf{y}}_{h}),O\bar{\mathsf{y}}_{h}-\mathsf{y}_{d})_{\mathcal{Y}} = (\mathsf{u}_{h}-\bar{\mathsf{u}}_{h},\mathcal{I}_{U'}^{U}D^{*}\bar{\mathsf{p}}_{h})_{U}$, which concludes the proof.
\end{proof}


\subsection{A priori error estimates}
We now present an approximation result for the solution of the optimal control problem \eqref{def:opt_cont_prob}.

\begin{theorem}[error estimates]\label{thm:a_priori_estimates}
Assume that \eqref{eq:inf_sup_b} holds and that $\Pi: V \to V_{h}$ is a Fortin operator. Let $(\bar{\mathsf{y}},\bar{\mathsf{u}})\in Y\times U_{ad}$ be the unique solution to problem \eqref{def:opt_cont_prob} and $(\bar{\mathsf{y}}_{h},\bar{\mathsf{u}}_{h})\in Y_{h}\times U_{ad,h}$ be the unique solution to problem \eqref{def:discrete_opt_cont_prob_i}--\eqref{def:discrete_opt_cont_prob_ii}. Then,
\begin{align}\label{eq:estimate_control_apriori}
\alpha^2\|\bar{\mathsf{u}} - \bar{\mathsf{u}}_{h}\|_{U}^2
\lesssim &
~\alpha\inf_{\mathsf{u}_{h}\in U_{ad,h}}(\mathcal{I}_{U'}^{U}D^{*}\bar{\mathsf{p}}_{h} + \alpha \bar{\mathsf{u}}_{h}, \mathsf{u}_{h} - \bar{\mathsf{u}})_{U}\\
&+ \inf_{w_{h}\in Y_{h}}\|\bar{\mathsf{y}} - w_{h}\|_{Y}^2 + \inf_{v_{h}\in V_{h}}\|\bar{\mathsf{p}} - v_{h}\|_{V}^2 + \inf_{w_{h}\in Y_{h}}\|\bar{\lambda} - w_{h}\|_{Y}^2, \nonumber \\
\|\bar{\mathsf{y}} - \bar{\mathsf{y}}_{h}\|_Y^2
\lesssim &
\inf_{w_{h}\in Y_{h}}\|\bar{\mathsf{y}} - w_{h}\|_{Y}^2 + \|\bar{\mathsf{u}} - \bar{\mathsf{u}}_{h}\|_{U}^2, \nonumber
\end{align}
and
\begin{equation*}
\|\bar{\mathsf{p}} - \bar{\mathsf{p}}_{h}\|_{V}^2 \lesssim \inf_{v_{h}\in V_{h}}\|\bar{\mathsf{p}} - v_{h}\|_{V}^2 + \inf_{w_{h}\in Y_{h}}\|\bar{\lambda} - w_{h}\|_{Y}^2 + \inf_{w_{h}\in Y_{h}}\|\bar{\mathsf{y}} - w_{h}\|_{Y}^2 + \|\bar{\mathsf{u}} - \bar{\mathsf{u}}_{h}\|_{U}^2,
\end{equation*}
where $(\bar{\lambda},\bar{\mathsf{p}})\in Y\times V$ and $(\bar{\lambda}_{h},\bar{\mathsf{p}}_{h})\in Y_{h}\times V_{h}$ denote the optimal adjoint state and its corresponding discrete approximation, respectively.
\end{theorem}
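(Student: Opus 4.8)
The plan is to prove the three estimates in the order stated: the control estimate is self‑contained, the state estimate then invokes it, and the adjoint estimate uses both. Abbreviate $e_{\mathsf{u}}:=\bar{\mathsf{u}}-\bar{\mathsf{u}}_h$, $e_{\mathsf{y}}:=\bar{\mathsf{y}}-\bar{\mathsf{y}}_h$, $e_{\mathsf{p}}:=\bar{\mathsf{p}}-\bar{\mathsf{p}}_h$. I shall use two auxiliary objects: the discrete state $\hat{\mathsf{y}}_h:=S_h\bar{\mathsf{u}}\in Y_h$ driven by the \emph{continuous} optimal control, with associated discrete adjoint $(\hat{\lambda}_h,\hat{\mathsf{p}}_h)\in Y_h\times V_h$ (the solution of \eqref{eq:discrete_adj_eq} with $\mathsf{y}_h=\hat{\mathsf{y}}_h$), and the Galerkin approximation $(\breve{\lambda}_h,\breve{\mathsf{p}}_h)\in Y_h\times V_h$ of the continuous adjoint system \eqref{eq:adj_eq_mixed} (the solution of \eqref{eq:discrete_adj_eq} with the \emph{continuous} datum $O\bar{\mathsf{y}}-\mathsf{y}_d$). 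Since a Fortin operator is available, two standard DPG facts will be used throughout: \emph{(i) quasi‑optimality} of the practical DPG discretizations of the state equation and of the mixed system \eqref{eq:adj_eq_mixed}; because the exact residual/error‑representation variable of the state system vanishes, this gives $\|\bar{\mathsf{y}}-\hat{\mathsf{y}}_h\|_Y\lesssim\inf_{w_h\in Y_h}\|\bar{\mathsf{y}}-w_h\|_Y$ and $\|\bar{\lambda}-\breve{\lambda}_h\|_Y+\|\bar{\mathsf{p}}-\breve{\mathsf{p}}_h\|_V\lesssim\inf_{w_h\in Y_h}\|\bar{\lambda}-w_h\|_Y+\inf_{v_h\in V_h}\|\bar{\mathsf{p}}-v_h\|_V$; and \emph{(ii) uniform discrete stability}, i.e.\ $\|S_h\mathsf{z}\|_Y\lesssim\|\mathsf{z}\|_U$ and the discrete adjoint depends on its datum Lipschitz‑continuously, uniformly in $h$.

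\emph{Control estimate.} Because $U_{ad,h}\subseteq U_{ad}$, we may test \eqref{eq:var_ineq} with $\mathsf{u}=\bar{\mathsf{u}}_h$ and \eqref{eq:discrete_var_ineq} with an arbitrary $\mathsf{u}_h\in U_{ad,h}$; adding the two and rearranging (splitting $\mathsf{u}_h-\bar{\mathsf{u}}_h=(\mathsf{u}_h-\bar{\mathsf{u}})+(\bar{\mathsf{u}}-\bar{\mathsf{u}}_h)$ in the $\alpha$‑terms and in the adjoint‑terms, and using $\langle De_{\mathsf{u}},v\rangle_{V',V}=(\mathcal{I}_{U'}^{U}D^{*}v,e_{\mathsf{u}})_U$) gives, for every $\mathsf{u}_h\in U_{ad,h}$,
\begin{equation*}
\alpha\|e_{\mathsf{u}}\|_U^2\leq(\mathcal{I}_{U'}^{U}D^{*}\bar{\mathsf{p}}_h+\alpha\bar{\mathsf{u}}_h,\mathsf{u}_h-\bar{\mathsf{u}})_U+\langle De_{\mathsf{u}},\bar{\mathsf{p}}_h-\bar{\mathsf{p}}\rangle_{V',V}.
\end{equation*}
I then split $\bar{\mathsf{p}}_h-\bar{\mathsf{p}}=(\bar{\mathsf{p}}_h-\hat{\mathsf{p}}_h)+(\hat{\mathsf{p}}_h-\bar{\mathsf{p}})$. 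For the first part, $(\bar{\lambda}_h-\hat{\lambda}_h,\bar{\mathsf{p}}_h-\hat{\mathsf{p}}_h)$ solves \eqref{eq:discrete_adj_eq} with datum $O^{*}\mathcal{I}_{\mathcal{Y}}^{\mathcal{Y}'}O(\bar{\mathsf{y}}_h-\hat{\mathsf{y}}_h)$ and $\bar{\mathsf{y}}_h-\hat{\mathsf{y}}_h=-S_he_{\mathsf{u}}$, while $S_he_{\mathsf{u}}=\hat{\mathsf{y}}_h-\bar{\mathsf{y}}_h$ together with its residual $\eta_h\in V_h$ solves the discrete state system \eqref{eq:equivalent_discrete_st_eq} with datum $De_{\mathsf{u}}$; testing the state system against $\bar{\mathsf{p}}_h-\hat{\mathsf{p}}_h$, the adjoint system against $S_he_{\mathsf{u}}$ and against $\eta_h$, and using the orthogonalities $b(w_h,\eta_h)=0$ for $w_h\in Y_h$ and $(\eta_h,\bar{\mathsf{p}}_h-\hat{\mathsf{p}}_h)_V=0$, one obtains $\langle De_{\mathsf{u}},\bar{\mathsf{p}}_h-\hat{\mathsf{p}}_h\rangle_{V',V}=-\|O S_he_{\mathsf{u}}\|_{\mathcal{Y}}^2\leq0$, so this term is discarded. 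For the second part, $\hat{\mathsf{p}}_h$ and $\bar{\mathsf{p}}$ are the discrete and the continuous adjoint states \emph{for the same control} $\bar{\mathsf{u}}$; inserting $\breve{\mathsf{p}}_h$ and combining quasi‑optimality, discrete stability and $\|\bar{\mathsf{y}}-\hat{\mathsf{y}}_h\|_Y\lesssim\inf_{w_h}\|\bar{\mathsf{y}}-w_h\|_Y$ yields $\|\hat{\mathsf{p}}_h-\bar{\mathsf{p}}\|_V\lesssim\inf_{w_h\in Y_h}\|\bar{\mathsf{y}}-w_h\|_Y+\inf_{w_h\in Y_h}\|\bar{\lambda}-w_h\|_Y+\inf_{v_h\in V_h}\|\bar{\mathsf{p}}-v_h\|_V=:\rho$. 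Hence $|\langle De_{\mathsf{u}},\hat{\mathsf{p}}_h-\bar{\mathsf{p}}\rangle_{V',V}|\lesssim\|e_{\mathsf{u}}\|_U\,\rho$, and Young's inequality (absorbing $\tfrac{\alpha}{2}\|e_{\mathsf{u}}\|_U^2$), multiplication by $2\alpha$, the bound $\rho^2\lesssim\inf_{w_h}\|\bar{\mathsf{y}}-w_h\|_Y^2+\inf_{w_h}\|\bar{\lambda}-w_h\|_Y^2+\inf_{v_h}\|\bar{\mathsf{p}}-v_h\|_V^2$, and taking the infimum over $\mathsf{u}_h\in U_{ad,h}$ give the first estimate.

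\emph{State and adjoint estimates.} For the state, write $e_{\mathsf{y}}=(\bar{\mathsf{y}}-\hat{\mathsf{y}}_h)+(\hat{\mathsf{y}}_h-\bar{\mathsf{y}}_h)$: quasi‑optimality controls the first summand by $\inf_{w_h\in Y_h}\|\bar{\mathsf{y}}-w_h\|_Y$, and $\hat{\mathsf{y}}_h-\bar{\mathsf{y}}_h=S_he_{\mathsf{u}}$ with discrete stability controls the second by $\|e_{\mathsf{u}}\|_U$; squaring gives the second estimate. For the adjoint, write $e_{\mathsf{p}}=(\bar{\mathsf{p}}-\breve{\mathsf{p}}_h)+(\breve{\mathsf{p}}_h-\bar{\mathsf{p}}_h)$: quasi‑optimality of the discretization of \eqref{eq:adj_eq_mixed} bounds the first summand by $\inf_{w_h}\|\bar{\lambda}-w_h\|_Y+\inf_{v_h}\|\bar{\mathsf{p}}-v_h\|_V$, while $\breve{\mathsf{p}}_h-\bar{\mathsf{p}}_h$ is the difference of two discrete adjoints whose data differ by $O^{*}\mathcal{I}_{\mathcal{Y}}^{\mathcal{Y}'}O(\bar{\mathsf{y}}-\bar{\mathsf{y}}_h)$, so discrete stability gives $\|\breve{\mathsf{p}}_h-\bar{\mathsf{p}}_h\|_V\lesssim\|e_{\mathsf{y}}\|_Y$; inserting the state estimate and squaring yields the third estimate.

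\emph{Main obstacle.} The only genuinely delicate point is the cross term $\langle De_{\mathsf{u}},\bar{\mathsf{p}}_h-\bar{\mathsf{p}}\rangle_{V',V}$ in the control estimate: bounding it directly by $\|e_{\mathsf{u}}\|_U\|e_{\mathsf{p}}\|_V$ is circular, since $\|e_{\mathsf{p}}\|_V$ is itself only controlled in terms of $\|e_{\mathsf{u}}\|_U$. The device that breaks the circle is the insertion of $\hat{\mathsf{p}}_h$: the purely discrete difference $\bar{\mathsf{p}}_h-\hat{\mathsf{p}}_h$ can be paired exactly against $De_{\mathsf{u}}$ by testing the discrete state and adjoint systems against each other — and it carries the favourable sign, so it is simply dropped — whereas the remaining difference $\hat{\mathsf{p}}_h-\bar{\mathsf{p}}$ is an adjoint discretization error at fixed control, of best‑approximation order, so the resulting contribution is only \emph{linear} in $\|e_{\mathsf{u}}\|_U$ and can be absorbed. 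A secondary technical point is that the quasi‑optimality invoked for the mixed adjoint system must be the version bounding the error in $(\lambda,\mathsf{p})$ jointly by the best approximation of \emph{both} components, which is precisely why the term $\inf_{w_h}\|\bar{\lambda}-w_h\|_Y$ enters the right‑hand sides.
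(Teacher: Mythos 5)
Your proposal is correct and follows essentially the same route as the paper's proof: the same two auxiliary discrete solutions (your $\hat{\mathsf{y}}_h,\hat{\mathsf{p}}_h$ and $\breve{\mathsf{p}}_h$ are the paper's $\hat{\mathsf{y}}_h,\hat{\mathsf{p}}_h$ and $\tilde{\mathsf{p}}_h$), the same splitting of the cross term into a sign-definite purely discrete part equal to $-\|O(\hat{\mathsf{y}}_h-\bar{\mathsf{y}}_h)\|_{\mathcal{Y}}^2$ and a fixed-control discretization error handled by quasi-optimality, discrete stability, and Young's inequality. The state and adjoint estimates are likewise obtained exactly as in the paper.
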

\begin{proof}
We begin by estimating the term $\|\bar{\mathsf{u}} - \bar{\mathsf{u}}_{h}\|_{U}$. Setting $\mathsf{u} = \bar{\mathsf{u}}_{h}$ in \eqref{eq:var_ineq}, adding and subtracting $\bar{\mathsf{u}}$ in \eqref{eq:discrete_var_ineq}, adding these two inequalities and reordering terms result in
\begin{equation}\label{eq:u_h-u_ineq}
\alpha\|\bar{\mathsf{u}} - \bar{\mathsf{u}}_{h}\|_{U}^2
\leq 
\inf_{\mathsf{u}_{h}\in U_{ad,h}}(\mathcal{I}_{U'}^{U}D^{*}\bar{\mathsf{p}}_{h} + \alpha \bar{\mathsf{u}}_{h}, \mathsf{u}_{h} - \bar{\mathsf{u}})_{U} + (\mathcal{I}_{U'}^{U}D^{*}[\bar{\mathsf{p}} - \bar{\mathsf{p}}_{h}], \bar{\mathsf{u}}_{h} - \bar{\mathsf{u}})_{U}.
\end{equation}
We now estimate $(\mathcal{I}_{U'}^{U}D^{*}[\bar{\mathsf{p}} - \bar{\mathsf{p}}_{h}], \bar{\mathsf{u}}_{h} - \bar{\mathsf{u}})_{U}$. To bound this term, we introduce the auxiliary variables $(\hat{\mathsf{y}}_{h},\hat{\varepsilon}_{h})\in Y_{h}\times V_{h}$ and $(\hat{\lambda}_{h},\hat{\mathsf{p}}_{h})\in Y_{h}\times V_{h}$ as the solutions to
\begin{equation}\label{eq:hat_y}
(\hat{\varepsilon}_{h},v_{h})_{V} + b(\hat{\mathsf{y}}_{h},v_{h})  =  \langle D\bar{\mathsf{u}},v_{h}\rangle_{V',V}, \quad
b(w_{h},\hat{\varepsilon}_{h}) =  0  \quad \forall (v_{h},w_{h})\in V_{h}\times Y_{h},
\end{equation}
and
\begin{equation}\label{eq:hat_p}
(\hat{\mathsf{p}}_{h},v_{h})_{V} - b(\hat{\lambda}_{h},v_{h})  =  0, \quad
b(w_{h},\hat{\mathsf{p}}_{h})  =  \langle O^{*}\mathcal{I}_{\mathcal{Y}}^{\mathcal{Y}'}(O\hat{\mathsf{y}}_{h} - \mathsf{y}_{d}),w_{h}\rangle_{Y',Y}  
\end{equation}
for all $(v_{h},w_{h})\in V_{h}\times Y_{h}$, respectively. Invoke the auxiliary solution $\hat{\mathsf{p}}_{h}$ and write
\begin{align}\label{eq:terms_I_II}
(\mathcal{I}_{U'}^{U}D^{*}[\bar{\mathsf{p}} - \bar{\mathsf{p}}_{h}], \bar{\mathsf{u}}_{h} - \bar{\mathsf{u}})_{U} 
=  & ~
(\mathcal{I}_{U'}^{U}D^{*}[\hat{\mathsf{p}}_{h} - \bar{\mathsf{p}}_{h}], \bar{\mathsf{u}}_{h} - \bar{\mathsf{u}})_{U} \\ & + (\mathcal{I}_{U'}^{U}D^{*}[\bar{\mathsf{p}} - \hat{\mathsf{p}}_{h}], \bar{\mathsf{u}}_{h} - \bar{\mathsf{u}})_{U}
 =: \mathsf{I} + \mathsf{II}. \nonumber
\end{align}
Let us bound the term $\mathsf{I}$. Note that $(\hat{\lambda}_{h} - \bar{\lambda}_{h},\hat{\mathsf{p}}_{h} - \bar{\mathsf{p}}_{h})\in Y_{h}\times V_{h}$ solves 
\begin{align}\label{eq:hat_p-bar_p}
(\hat{\mathsf{p}}_{h} - \bar{\mathsf{p}}_{h},v_{h})_{V} - b(\hat{\lambda}_{h}-\bar{\lambda}_{h},v_{h})  &=  0, \\
b(w_{h},\hat{\mathsf{p}}_{h} - \bar{\mathsf{p}}_{h})  &=  \langle O^{*}\mathcal{I}_{\mathcal{Y}}^{\mathcal{Y}'}O(\hat{\mathsf{y}}_{h} - \bar{\mathsf{y}}_{h}),w_{h}\rangle_{Y',Y} \nonumber
\end{align}
for all $(v_{h},w_{h})\in V_{h}\times Y_{h}$. On the other hand, we note that $(\bar{\mathsf{y}}_{h} - \hat{\mathsf{y}}_{h},\bar{\varepsilon}_{h} - \hat{\varepsilon}_{h})\in Y_{h}\times V_{h}$ solves, for all $(v_{h},w_{h})\in V_{h}\times Y_{h}$, the discrete system 
\begin{equation}\label{eq:hat_y-bar_y}
(\bar{\varepsilon}_{h} - \hat{\varepsilon}_{h},v_{h})_{V} + b(\bar{\mathsf{y}}_{h} - \hat{\mathsf{y}}_{h},v_{h})  =  \langle D(\bar{\mathsf{u}}_{h} - \bar{\mathsf{u}}),v_{h}\rangle_{V',V}, \quad
b(w_{h},\bar{\varepsilon}_{h} - \hat{\varepsilon}_{h}) =  0.
\end{equation}
Hence, we conclude, replacing $(v_{h},w_{h})=(\bar{\varepsilon}_{h} - \hat{\varepsilon}_{h},\bar{\mathsf{y}}_{h} - \hat{\mathsf{y}}_{h})$ in \eqref{eq:hat_p-bar_p} and $(v_{h},w_{h})=(\hat{\mathsf{p}}_{h} - \bar{\mathsf{p}}_{h},\hat{\lambda}_{h} - \bar{\lambda}_{h})$ in \eqref{eq:hat_y-bar_y}, that $\mathsf{I} = -\|O(\hat{\mathsf{y}}_{h} - \bar{\mathsf{y}}_{h})\|_{\mathcal{Y}}^2 \leq 0$.

To control the term $\mathsf{II}$, we invoke Young's inequality to arrive at
\begin{equation}\label{eq:estimate_II_Young}
\mathsf{II} 
\leq 
\frac{1}{2\alpha}\|\mathcal{I}_{U'}^{U}D^{*}[\bar{\mathsf{p}} - \hat{\mathsf{p}}_{h}]\|_{U}^2 + \frac{\alpha}{2}\|\bar{\mathsf{u}}_{h} - \bar{\mathsf{u}}\|_{U}^2.
\end{equation}
To estimate the term $\|\mathcal{I}_{U'}^{U}D^{*}[\bar{\mathsf{p}} - \hat{\mathsf{p}}_{h}]\|_{U}^2$, we introduce the auxiliary variable $(\tilde{\lambda}_{h},\tilde{\mathsf{p}}_{h})\in Y_{h}\times V_{h}$ as the unique solution to
\begin{equation*}
(\tilde{\mathsf{p}}_{h},v_{h})_{V} - b(\tilde{\lambda}_{h},v_{h})  =  0, \quad
b(w_{h},\tilde{\mathsf{p}}_{h})  =  \langle O^{*}\mathcal{I}_{\mathcal{Y}}^{\mathcal{Y}'}(O\bar{\mathsf{y}} - \mathsf{y}_{d}),w_{h}\rangle_{Y',Y}  
\end{equation*}
for all $(v_{h},w_{h})\in V_{h}\times Y_{h}$. The triangle inequality yields
\begin{equation*}
\|\mathcal{I}_{U'}^{U}D^{*}[\bar{\mathsf{p}} - \hat{\mathsf{p}}_{h}]\|_{U}^2
\leq
2\|\mathcal{I}_{U'}^{U}D^{*}[\bar{\mathsf{p}} - \tilde{\mathsf{p}}_{h}]\|_{U}^2 + 2\|\mathcal{I}_{U}^{U}D^{*}[\tilde{\mathsf{p}}_{h} - \hat{\mathsf{p}}_{h}]\|_{U}^2 =: \mathsf{II}_{1} + \mathsf{II}_{2}.
\end{equation*}
By \cite[Theorem 3.1]{MR4090394} $(\tilde{\lambda}_{h},\tilde{\mathsf{p}}_{h})$ is a quasi-best approximation of $(\bar{\lambda},\bar{\mathsf{p}})$ on $Y_{h}\times V_{h}$ and we thus arrive at $\mathsf{II}_{1} \lesssim \|\bar{\mathsf{p}} - \tilde{\mathsf{p}}_{h}\|_{V}^2 \lesssim \inf_{v_{h}\in V_{h}}\|\bar{\mathsf{p}} - v_{h}\|_{V}^2 + \inf_{w_{h}\in Y_{h}}\|\bar{\lambda} - w_{h}\|_{Y}^2$. Moreover, a stability estimate of the discrete system
\begin{equation*}
 (\tilde{\mathsf{p}}_{h} - \hat{\mathsf{p}}_{h},v_{h})_{V} - b(\tilde{\lambda}_{h} - \hat{\lambda}_{h},v_{h})  =  0, \quad
b(w_{h},\tilde{\mathsf{p}}_{h} - \hat{\mathsf{p}}_{h})  =  \langle O^{*}\mathcal{I}_{\mathcal{Y}}^{\mathcal{Y}'}O(\bar{\mathsf{y}} - \hat{\mathsf{y}}_{h}),w_{h}\rangle_{Y',Y}  
\end{equation*}
for all $(v_{h},w_{h})\in V_{h}\times Y_{h}$, implies that 
\begin{equation*}
\mathsf{II}_{2} \lesssim \|\tilde{\mathsf{p}}_{h} - \hat{\mathsf{p}}_{h}\|_{V}^2 \lesssim \|O^{*}\mathcal{I}_{\mathcal{Y}}^{\mathcal{Y}'}O(\bar{\mathsf{y}} - \hat{\mathsf{y}}_{h})\|_{Y'}^2 \lesssim \|\bar{\mathsf{y}} - \hat{\mathsf{y}}_{h}\|_{Y}^2 \lesssim \inf_{w_{h}\in Y_{h}}\|\bar{\mathsf{y}} - w_{h}\|_{Y}^2,
\end{equation*}
where, in the last inequality, we have used that $\hat{\mathsf{y}}_{h}$ corresponds to a quasi-best approximation of $\bar{\mathsf{y}}$ on $Y_{h}$ \cite[Theorem 4.2]{MR3521055}. Therefore, in view of the estimates obtained for $\mathsf{II}_{1}$ and $\mathsf{II}_{2}$, it follows that
\begin{equation}\label{eq:estimate_bar_p-hat_p}
\|\mathcal{I}_{U'}^{U}D^{*}[\bar{\mathsf{p}} - \hat{\mathsf{p}}_{h}]\|_{U}^2
\lesssim
\inf_{w_{h}\in Y_{h}}\|\bar{\mathsf{y}} - w_{h}\|_{Y}^2
+ \inf_{v_{h}\in V_{h}}\|\bar{\mathsf{p}} - v_{h}\|_{V}^2 + \inf_{w_{h}\in Y_{h}}\|\bar{\lambda} - w_{h}\|_{Y}^2.
\end{equation}
Using \eqref{eq:estimate_bar_p-hat_p} in \eqref{eq:estimate_II_Young}, and combining with \eqref{eq:terms_I_II} we conclude that
\begin{multline*}
(\mathcal{I}_{U'}^{U}D^{*}[\bar{\mathsf{p}} - \bar{\mathsf{p}}_{h}], \bar{\mathsf{u}}_{h} - \bar{\mathsf{u}})_{U}\\
\leq \frac{C}{\alpha}\left(\inf_{w_{h}\in Y_{h}}\|\bar{\mathsf{y}} - w_{h}\|_{Y}^2 + \inf_{v_{h}\in V_{h}}\|\bar{\mathsf{p}} - v_{h}\|_{V}^2 + \inf_{w_{h}\in Y_{h}}\|\bar{\lambda} - w_{h}\|_{Y}^2\right) + \frac{\alpha}{2}\|\bar{\mathsf{u}}_{h} - \bar{\mathsf{u}}\|_{U}^2,
\end{multline*}
with $C>0$, upon using the fact that $\mathsf{I}\leq 0$. We obtain the desired estimate for $\|\bar{\mathsf{u}} - \bar{\mathsf{u}}_{h}\|_{U}$ by using the latter bound in \eqref{eq:u_h-u_ineq}.

Let us now estimate the error $\|\bar{\mathsf{y}} - \bar{\mathsf{y}}_{h}\|_{Y}$. To accomplish this task, we invoke the auxiliary term $\hat{\mathsf{y}}_{h}$, defined in \eqref{eq:hat_y}, and the triangle inequality to arrive at 
\begin{equation*}
\|\bar{\mathsf{y}} - \bar{\mathsf{y}}_{h}\|_{Y} \leq
\|\bar{\mathsf{y}} - \hat{\mathsf{y}}_{h}\|_{Y} + \|\hat{\mathsf{y}}_{h} - \bar{\mathsf{y}}_{h}\|_{Y}.
\end{equation*}
The stability estimate $\|\hat{\mathsf{y}}_{h} - \bar{\mathsf{y}}_{h}\|_{Y}\lesssim \|\bar{\mathsf{u}} - \bar{\mathsf{u}}_{h}\|_{U}$ and the fact that $\hat{\mathsf{y}}_{h}$ corresponds to a quasi-best approximation of $\bar{\mathsf{y}}$ on $Y_{h}$ \cite[Theorem 4.2]{MR3521055}, prove
\begin{equation}\label{eq:y-y_h}
\|\bar{\mathsf{y}} - \bar{\mathsf{y}}_{h}\|_{Y}^2 \lesssim
\inf_{w_{h}\in Y_{h}}\|\bar{\mathsf{y}} - w_{h}\|_{Y}^2 + \|\bar{\mathsf{u}} - \bar{\mathsf{u}}_{h}\|_{U}^2.
\end{equation}

Bounding the term $\|\bar{\mathsf{p}} - \bar{\mathsf{p}}_{h}\|_{V}$ follows similar arguments as the ones that led to \eqref{eq:y-y_h}. For brevity, we skip those details.
\end{proof}

\begin{remark}[variational discretization]
When the variational discretization approach \cite{MR2122182} is considered, we have that $\inf_{\mathsf{u}_{h}\in U_{ad,h}}(\mathcal{I}_{U'}^{U}D^{*}\bar{\mathsf{p}}_{h} + \alpha \bar{\mathsf{u}}_{h}, \mathsf{u}_{h} - \bar{\mathsf{u}})_{U}\leq 0$. 
\end{remark}

\begin{remark}[estimation of $\|\bar{\mathsf{u}} - \bar{\mathsf{u}}_{h}\|_{U}^2$]\label{remark:estimation_eu}
If $\pi_{U_{h}}\bar{\mathsf{u}}\in U_{ad,h}$, with $\pi_{U_{h}}: U \to U_{h}$ being the orthogonal projection operator, we can obtain the estimate
\begin{equation*}
\inf_{\mathsf{u}_{h}\in U_{ad,h}}(\mathcal{I}_{U'}^{U}D^{*}\bar{\mathsf{p}}_{h} + \alpha \bar{\mathsf{u}}_{h}, \mathsf{u}_{h} - \bar{\mathsf{u}})_{U}  \lesssim \|(1-\pi_{U_{h}})\mathcal{I}_{U'}^{U}D^{*}\bar{\mathsf{p}}_{h}\|_{U}^{2} + \|(1-\pi_{U_{h}})\bar{\mathsf{u}}\|_{U}^2,
\end{equation*}
which yields, in view of the estimate \eqref{eq:estimate_control_apriori}, the a priori error bound
\begin{multline}\label{eq:sharp_estimate_control}
\|\bar{\mathsf{u}} - \bar{\mathsf{u}}_{h}\|_{U}^2
\lesssim 
\inf_{w_{h}\in Y_{h}}\|\bar{\mathsf{y}} - w_{h}\|_{Y}^2 + \inf_{v_{h}\in V_{h}}\|\bar{\mathsf{p}} - v_{h}\|_{V}^2  + \inf_{w_{h}\in Y_{h}}\|\bar{\lambda} - w_{h}\|_{Y}^2 \\ + \|(1-\pi_{U_{h}})\mathcal{I}_{U'}^{U}D^{*}\bar{\mathsf{p}}_{h}\|_{U}^{2} + \|(1-\pi_{U_{h}})\bar{\mathsf{u}}\|_{U}^2.
\end{multline}
\end{remark}


\subsection{A posteriori error estimates}
In this section, we present residual-type a posteriori error estimators associated to the optimal control problem \eqref{def:opt_cont_prob} and provide reliability and efficiency estimates.

Let us assume that there exists a computable control $\tilde{\mathsf{u}}\in U_{ad}$ such that 
\begin{equation}\label{eq:ineq_tilde_u}
(\mathcal{I}_{U'}^{U}D^{*}\bar{\mathsf{p}}_{h} + \alpha \tilde{\mathsf{u}}, \mathsf{u} - \tilde{\mathsf{u}})_{U} \geq 0\quad \forall\mathsf{u}\in U_{ad}.
\end{equation}
With $\tilde{\mathsf{u}}$ at hand, we introduce the variables $\tilde{\mathsf{y}}\in Y$ and $\tilde{\mathsf{p}}\in V$ as the solutions to
\begin{equation}\label{eq:tilde_y_and_tilde_p}
B\tilde{\mathsf{y}} = D\tilde{\mathsf{u}}\text{ in } V' \quad \text{ and }\quad B^{*}\tilde{\mathsf{p}} = O^{*}\mathcal{I}_{\mathcal{Y}}^{\mathcal{Y}'} (O\tilde{\mathsf{y}} - \mathsf{y}_{d}) ~\text{ in } ~ Y',
\end{equation}
respectively.

The next result establishes reliability estimates for the optimal control problem.

\begin{theorem}[reliability estimates]\label{thm:reliability_estimates}
Assume that \eqref{eq:inf_sup_b} holds and that $\Pi: V \to V_{h}$ is a Fortin operator. Let $(\bar{\mathsf{y}},\bar{\mathsf{u}})\in Y\times U_{ad}$ be the unique solution to problem \eqref{def:opt_cont_prob} and $(\bar{\mathsf{y}}_{h},\bar{\mathsf{u}}_{h})\in Y_{h}\times U_{ad,h}$ be the unique solution to problem \eqref{def:discrete_opt_cont_prob_i}--\eqref{def:discrete_opt_cont_prob_ii}. Then, with $\tilde{\mathsf{u}}\in U_{ad}$ from \eqref{eq:ineq_tilde_u}, we have that
\begin{align*}
\alpha^2\|\bar{\mathsf{u}} - \bar{\mathsf{u}}_{h}\|_{U}^2
&\!\lesssim \!
(1 \!+\!\alpha^2)\|\tilde{\mathsf{u}} - \bar{\mathsf{u}}_{h}\|_{U}^2 + \| D\bar{\mathsf{u}}_{h} - B\bar{\mathsf{y}}_{h}\|_{V'}^2 + \|O^{*}\mathcal{I}_{\mathcal{Y}}^{\mathcal{Y}'}(O\bar{\mathsf{y}}_{h} - \mathsf{y}_{d}) - B^{*}\bar{\mathsf{p}}_{h}\|_{Y'}^2, \\
\|\bar{\mathsf{y}} - \bar{\mathsf{y}}_{h}\|_Y^2
& \!\lesssim \!
\| D\bar{\mathsf{u}}_{h} - B\bar{\mathsf{y}}_{h}\|_{V'}^2 + \|\bar{\mathsf{u}} - \bar{\mathsf{u}}_{h}\|_{U}^2, 
\end{align*}
and
\begin{equation*}
\|\bar{\mathsf{p}} - \bar{\mathsf{p}}_{h}\|_{V}^2 \lesssim \| D\bar{\mathsf{u}}_{h} - B\bar{\mathsf{y}}_{h}\|_{V'}^2 + \|O^{*}\mathcal{I}_{\mathcal{Y}}^{\mathcal{Y}'}(O\bar{\mathsf{y}}_{h} - \mathsf{y}_{d}) - B^{*}\bar{\mathsf{p}}_{h}\|_{Y'}^2 + \|\bar{\mathsf{u}} - \bar{\mathsf{u}}_{h}\|_{U}^2,
\end{equation*}
where $(\bar{\lambda},\bar{\mathsf{p}})\in Y\times V$ and $(\bar{\lambda}_{h},\bar{\mathsf{p}}_{h})\in Y_{h}\times V_{h}$ denote the optimal adjoint state and its corresponding discrete approximation, respectively.
\end{theorem}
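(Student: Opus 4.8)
The plan is to mimic the structure of the a priori estimates proof (Theorem~\ref{thm:a_priori_estimates}), but replacing every \emph{best-approximation} term by a \emph{residual} term that measures how far the discrete quantities $(\bar{\mathsf{y}}_h,\bar{\mathsf{p}}_h)$ are from solving the exact equations. The computable control $\tilde{\mathsf u}$ and its associated exact states $\tilde{\mathsf y}$, $\tilde{\mathsf p}$ from \eqref{eq:ineq_tilde_u}--\eqref{eq:tilde_y_and_tilde_p} play the role that $(\bar{\mathsf u}_h,\hat{\mathsf y}_h,\hat{\mathsf p}_h)$ played there: they satisfy an exact variational inequality, so one can test \eqref{eq:var_ineq} with $\mathsf u=\tilde{\mathsf u}$ and \eqref{eq:ineq_tilde_u} with $\mathsf u=\bar{\mathsf u}$, add, and obtain
\begin{equation*}
\alpha\|\bar{\mathsf u}-\tilde{\mathsf u}\|_U^2 \le (\mathcal{I}_{U'}^{U}D^*[\bar{\mathsf p}-\tilde{\mathsf p}],\tilde{\mathsf u}-\bar{\mathsf u})_U
= -\|O(\bar{\mathsf y}-\tilde{\mathsf y})\|_{\mathcal Y}^2 \le 0,
\end{equation*}
where the middle equality comes from the same duality computation as in Theorem~\ref{thm:1st_order_opt_cond}, using $B(\bar{\mathsf y}-\tilde{\mathsf y})=D(\bar{\mathsf u}-\tilde{\mathsf u})$ and the two adjoint equations. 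Hence $\bar{\mathsf u}=\tilde{\mathsf u}$, $\bar{\mathsf y}=\tilde{\mathsf y}$, $\bar{\mathsf p}=\tilde{\mathsf p}$: the control problem with $\bar{\mathsf p}_h$ frozen in the inequality reproduces the true optimizer. Therefore the triangle inequality gives $\|\bar{\mathsf u}-\bar{\mathsf u}_h\|_U\le\|\bar{\mathsf u}-\tilde{\mathsf u}\|_U+\|\tilde{\mathsf u}-\bar{\mathsf u}_h\|_U=\|\tilde{\mathsf u}-\bar{\mathsf u}_h\|_U$, which already accounts for the $(1+\alpha^2)\|\tilde{\mathsf u}-\bar{\mathsf u}_h\|_U^2$ term — but this alone is too crude, since the asserted bound also contains the PDE residuals, meaning the honest argument must not collapse $\bar{\mathsf p}$ and $\tilde{\mathsf p}$ prematurely.

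So the better route is: keep $\bar{\mathsf p}$ (the true adjoint at the true optimum) and estimate $\|\bar{\mathsf u}-\bar{\mathsf u}_h\|_U$ directly as in \eqref{eq:u_h-u_ineq}, testing \eqref{eq:var_ineq} with $\mathsf u=\bar{\mathsf u}_h$ and \eqref{eq:discrete_var_ineq} with a suitable $\mathsf u_h$; but now, since we are after an a posteriori bound, replace $\inf_{\mathsf u_h\in U_{ad,h}}(\cdots,\mathsf u_h-\bar{\mathsf u})_U$ by using \eqref{eq:ineq_tilde_u} with $\mathsf u=\bar{\mathsf u}_h\in U_{ad}$, which yields $(\mathcal{I}_{U'}^{U}D^*\bar{\mathsf p}_h+\alpha\tilde{\mathsf u},\bar{\mathsf u}_h-\tilde{\mathsf u})_U\ge0$. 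Combining this with \eqref{eq:discrete_var_ineq}-type information and \eqref{eq:var_ineq}, after adding and reorganizing, the error $\alpha\|\bar{\mathsf u}-\bar{\mathsf u}_h\|_U^2$ is controlled by a term involving $\|\tilde{\mathsf u}-\bar{\mathsf u}_h\|_U$ and by $(\mathcal{I}_{U'}^{U}D^*[\bar{\mathsf p}-\bar{\mathsf p}_h],\bar{\mathsf u}_h-\bar{\mathsf u})_U$, and the latter is handled by $\|D^*[\bar{\mathsf p}-\bar{\mathsf p}_h]\|_{U'}\lesssim\|\bar{\mathsf p}-\bar{\mathsf p}_h\|_V$ plus Young's inequality to absorb $\tfrac{\alpha}{2}\|\bar{\mathsf u}-\bar{\mathsf u}_h\|_U^2$. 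It then remains to bound $\|\bar{\mathsf p}-\bar{\mathsf p}_h\|_V$ (and $\|\bar{\mathsf y}-\bar{\mathsf y}_h\|_Y$) a posteriori.

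For the state and adjoint bounds the key tool is the \emph{a posteriori estimate for the DPG approximation of a single mixed problem}: from \cite{MR3521055} (and \cite{MR4090394}), if $(\bar{\mathsf y}_h,\bar{\varepsilon}_h)$ solves \eqref{eq:equivalent_discrete_st_eq} with data $D\bar{\mathsf u}_h$, then $\|\mathsf y(\bar{\mathsf u}_h)-\bar{\mathsf y}_h\|_Y\lesssim\|D\bar{\mathsf u}_h-B\bar{\mathsf y}_h\|_{V'}$ where $\mathsf y(\bar{\mathsf u}_h)=S\bar{\mathsf u}_h$ is the exact state with the discrete control as data (this residual is the quantity $\|\bar{\varepsilon}_h\|_V$ up to constants, hence computable). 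Writing $\bar{\mathsf y}-\bar{\mathsf y}_h=(\bar{\mathsf y}-S\bar{\mathsf u}_h)+(S\bar{\mathsf u}_h-\bar{\mathsf y}_h)$ and using that $S$ is bounded on $U$ (so $\|\bar{\mathsf y}-S\bar{\mathsf u}_h\|_Y=\|S(\bar{\mathsf u}-\bar{\mathsf u}_h)\|_Y\lesssim\|\bar{\mathsf u}-\bar{\mathsf u}_h\|_U$) gives the second displayed inequality. The adjoint bound is analogous but two-layered: split $\bar{\mathsf p}-\bar{\mathsf p}_h$ through the exact adjoint $\mathsf p(\bar{\mathsf y}_h)$ solving $B^*\mathsf p=O^*\mathcal{I}_{\mathcal Y}^{\mathcal Y'}(O\bar{\mathsf y}_h-\mathsf y_d)$; the DPG a posteriori estimate gives $\|\mathsf p(\bar{\mathsf y}_h)-\bar{\mathsf p}_h\|_V\lesssim\|O^*\mathcal{I}_{\mathcal Y}^{\mathcal Y'}(O\bar{\mathsf y}_h-\mathsf y_d)-B^*\bar{\mathsf p}_h\|_{Y'}$, while $\|\bar{\mathsf p}-\mathsf p(\bar{\mathsf y}_h)\|_V\lesssim\|O(\bar{\mathsf y}-\bar{\mathsf y}_h)\|_{\mathcal Y}\lesssim\|\bar{\mathsf y}-\bar{\mathsf y}_h\|_Y$ by stability of $B^{-*}$; then insert the already-established $\bar{\mathsf y}$-bound. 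Feeding $\|\bar{\mathsf p}-\bar{\mathsf p}_h\|_V$ back into the control estimate, using $\|\bar{\mathsf y}-\bar{\mathsf y}_h\|_Y^2\lesssim\|D\bar{\mathsf u}_h-B\bar{\mathsf y}_h\|_{V'}^2+\|\bar{\mathsf u}-\bar{\mathsf u}_h\|_U^2$ and the crude $\|\bar{\mathsf u}-\bar{\mathsf u}_h\|_U\le\|\tilde{\mathsf u}-\bar{\mathsf u}_h\|_U+\|\bar{\mathsf u}-\tilde{\mathsf u}\|_U$ where $\|\bar{\mathsf u}-\tilde{\mathsf u}\|_U$ is itself $\lesssim\|\bar{\mathsf p}-\bar{\mathsf p}_h\|_{V}\cdot\alpha^{-1}$-type (from the $\tilde{\mathsf u}$ vs.\ $\bar{\mathsf u}$ inequalities), one closes the system of inequalities after absorbing, arriving at the stated bound with the $(1+\alpha^2)$ factor tracking where Young's inequality was used.

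The main obstacle I anticipate is the careful bookkeeping when coupling the three errors: $\|\bar{\mathsf u}-\bar{\mathsf u}_h\|_U$ depends on $\|\bar{\mathsf p}-\bar{\mathsf p}_h\|_V$, which depends on $\|\bar{\mathsf y}-\bar{\mathsf y}_h\|_Y$, which in turn depends on $\|\bar{\mathsf u}-\bar{\mathsf u}_h\|_U$ — so one must verify that all the couplings that feed back into $\|\bar{\mathsf u}-\bar{\mathsf u}_h\|_U^2$ come with a small enough constant (via Young) to be absorbed into the left-hand side, and that the $\alpha$-powers work out so that dividing by $\alpha$ at the end produces exactly $\alpha^2\|\bar{\mathsf u}-\bar{\mathsf u}_h\|_U^2$ on the left and $(1+\alpha^2)$ in front of $\|\tilde{\mathsf u}-\bar{\mathsf u}_h\|_U^2$. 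The other delicate point is confirming that the residual norms $\|D\bar{\mathsf u}_h-B\bar{\mathsf y}_h\|_{V'}$ and $\|O^*\mathcal{I}_{\mathcal Y}^{\mathcal Y'}(O\bar{\mathsf y}_h-\mathsf y_d)-B^*\bar{\mathsf p}_h\|_{Y'}$ are the correct surrogates for the DPG error estimators $\|\bar{\varepsilon}_h\|_V$ and $\|\bar{\mathsf p}_h-\text{(its optimal test lift)}\|$ — this is exactly the content of the cited DPG a posteriori theory, so I would state it as a lemma (or cite \cite[Theorem 4.2]{MR3521055} together with the built-in error representation) and reuse it twice.
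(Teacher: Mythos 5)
There is a genuine gap in the control estimate; the state and adjoint bounds (your last paragraph before the ``obstacles'') are fine and coincide with the paper's Steps~2 and~3.

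First, your opening computation is incorrect: \eqref{eq:ineq_tilde_u} is the variational inequality for $\tilde{\mathsf{u}}$ with the \emph{discrete} adjoint $\bar{\mathsf{p}}_{h}$ frozen inside, not with $\tilde{\mathsf{p}}$. Testing \eqref{eq:var_ineq} with $\mathsf{u}=\tilde{\mathsf{u}}$ and \eqref{eq:ineq_tilde_u} with $\mathsf{u}=\bar{\mathsf{u}}$ and adding gives
$\alpha\|\bar{\mathsf{u}}-\tilde{\mathsf{u}}\|_{U}^{2}\le(\mathcal{I}_{U'}^{U}D^{*}[\bar{\mathsf{p}}-\bar{\mathsf{p}}_{h}],\tilde{\mathsf{u}}-\bar{\mathsf{u}})_{U}$,
with $\bar{\mathsf{p}}_{h}$ on the right, not $\tilde{\mathsf{p}}$; the conclusion $\bar{\mathsf{u}}=\tilde{\mathsf{u}}$ is false (if it were true, $\tilde{\mathsf{u}}$ would be exactly the optimal control and the whole a posteriori analysis would be vacuous). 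You abandon this route for the wrong reason. Second, the route you then commit to cannot be closed. You bound $(\mathcal{I}_{U'}^{U}D^{*}[\bar{\mathsf{p}}-\bar{\mathsf{p}}_{h}],\bar{\mathsf{u}}_{h}-\bar{\mathsf{u}})_{U}$ by $\frac{1}{2\alpha}\|\bar{\mathsf{p}}-\bar{\mathsf{p}}_{h}\|_{V}^{2}+\frac{\alpha}{2}\|\bar{\mathsf{u}}-\bar{\mathsf{u}}_{h}\|_{U}^{2}$ and then substitute $\|\bar{\mathsf{p}}-\bar{\mathsf{p}}_{h}\|_{V}^{2}\lesssim(\text{residuals})+C\|\bar{\mathsf{u}}-\bar{\mathsf{u}}_{h}\|_{U}^{2}$, where $C$ collects the stability constants of the state and adjoint problems. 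This feeds $\frac{C}{2\alpha}\|\bar{\mathsf{u}}-\bar{\mathsf{u}}_{h}\|_{U}^{2}$ back onto the right-hand side, and absorbing it into $\alpha\|\bar{\mathsf{u}}-\bar{\mathsf{u}}_{h}\|_{U}^{2}$ would require $C<\alpha^{2}$ (up to factors of $2$), which fails for small $\alpha$ and in general. You correctly identify this as ``the main obstacle,'' but it is not a bookkeeping issue one can verify away --- the absorption genuinely does not go through.

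The missing idea, which is the paper's Step~1, is to combine the two halves of your attempt correctly: first split $\|\bar{\mathsf{u}}-\bar{\mathsf{u}}_{h}\|_{U}\le\|\bar{\mathsf{u}}-\tilde{\mathsf{u}}\|_{U}+\|\tilde{\mathsf{u}}-\bar{\mathsf{u}}_{h}\|_{U}$ (the second term is computable), then estimate $\|\bar{\mathsf{u}}-\tilde{\mathsf{u}}\|_{U}$ from the (corrected) inequality above by writing $\bar{\mathsf{p}}-\bar{\mathsf{p}}_{h}=(\bar{\mathsf{p}}-\tilde{\mathsf{p}})+(\tilde{\mathsf{p}}-\bar{\mathsf{p}}_{h})$. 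The first piece is exactly the term you computed in your opening paragraph: $(\mathcal{I}_{U'}^{U}D^{*}[\bar{\mathsf{p}}-\tilde{\mathsf{p}}],\tilde{\mathsf{u}}-\bar{\mathsf{u}})_{U}=-\|O(\tilde{\mathsf{y}}-\bar{\mathsf{y}})\|_{\mathcal{Y}}^{2}\le0$, so it is simply dropped --- this is what eliminates the dangerous coupling. The second piece $\|\mathcal{I}_{U'}^{U}D^{*}[\tilde{\mathsf{p}}-\bar{\mathsf{p}}_{h}]\|_{U}$ involves only $\tilde{\mathsf{u}}$, $\bar{\mathsf{u}}_{h}$, $\bar{\mathsf{y}}_{h}$, $\bar{\mathsf{p}}_{h}$ and is bounded by the computable residuals via the intermediate exact adjoint $\hat{\mathsf{p}}$ with data $O\bar{\mathsf{y}}_{h}-\mathsf{y}_{d}$ (the DPG a posteriori estimate you cite) plus the stability bound $\|\tilde{\mathsf{p}}-\hat{\mathsf{p}}\|_{V}\lesssim\|\tilde{\mathsf{y}}-\bar{\mathsf{y}}_{h}\|_{Y}\lesssim\|\tilde{\mathsf{u}}-\bar{\mathsf{u}}_{h}\|_{U}+\|D\bar{\mathsf{u}}_{h}-B\bar{\mathsf{y}}_{h}\|_{V'}$. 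No exact-solution error ever appears on the right of the control bound, so nothing needs to be absorbed and the $\alpha$-powers come out as stated.
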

\begin{proof}
The proof is divided into three steps.

\emph{Step 1.} (estimate of $\|\bar{\mathsf{u}} - \bar{\mathsf{u}}_{h}\|_{U}$). Invoke the auxiliary variable $\tilde{\mathsf{u}}\in U_{ad}$ and use the triangle inequality to see that
\begin{equation*}
\alpha^2\|\bar{\mathsf{u}} - \bar{\mathsf{u}}_{h}\|_{U}^2 
\leq
2\alpha^2\|\bar{\mathsf{u}} - \tilde{\mathsf{u}}\|_{U}^2  + 2\alpha^2\|\tilde{\mathsf{u}} - \bar{\mathsf{u}}_{h}\|_{U}^2.
\end{equation*}
We first concentrate on $\|\bar{\mathsf{u}} - \tilde{\mathsf{u}}\|_{U}^2$. Set $\mathsf{u}=\tilde{\mathsf{u}}$ in \eqref{eq:var_ineq}, $\mathsf{u}=\bar{\mathsf{u}}$ in \eqref{eq:ineq_tilde_u}, add the obtained inequalities, and use the auxiliary variable $\tilde{\mathsf{p}}\in V$ (defined in \eqref{eq:tilde_y_and_tilde_p}) to arrive at
\begin{align}\label{eq:u_h-u_ineq_a_posteriori}
\alpha\|\bar{\mathsf{u}} - \tilde{\mathsf{u}}\|_{U}^2
& \leq 
(\mathcal{I}_{U'}^{U}D^{*}[\bar{\mathsf{p}} - \bar{\mathsf{p}}_{h}], \tilde{\mathsf{u}} - \bar{\mathsf{u}})_{U} \\
&= (\mathcal{I}_{U'}^{U}D^{*}[\bar{\mathsf{p}} - \tilde{\mathsf{p}}], \tilde{\mathsf{u}} - \bar{\mathsf{u}})_{U} + (\mathcal{I}_{U'}^{U}D^{*}[\tilde{\mathsf{p}} - \bar{\mathsf{p}}_{h}], \tilde{\mathsf{u}} - \bar{\mathsf{u}})_{U}. \nonumber
\end{align}
To estimate the term $(\mathcal{I}_{U'}^{U}D^{*}[\bar{\mathsf{p}} - \tilde{\mathsf{p}}], \tilde{\mathsf{u}} - \bar{\mathsf{u}})_{U}$ in \eqref{eq:u_h-u_ineq_a_posteriori}, we use the fact that $(\mathcal{I}_{U'}^{U}D^{*}[\bar{\mathsf{p}} - \tilde{\mathsf{p}}], \tilde{\mathsf{u}} - \bar{\mathsf{u}})_{U} = - \|O(\tilde{\mathsf{y}} - \bar{\mathsf{y}})\|_{\mathcal{Y}}^{2}\leq 0$. This and Young's inequality yield
\begin{equation*}
\alpha\|\bar{\mathsf{u}} - \tilde{\mathsf{u}}\|_{U}^2 
\leq 
(\mathcal{I}_{U'}^{U}D^{*}[\tilde{\mathsf{p}} - \bar{\mathsf{p}}_{h}], \tilde{\mathsf{u}} - \bar{\mathsf{u}})_{U} 
\leq 
\frac{1}{2\alpha}\|\mathcal{I}_{U'}^{U}D^{*}[\tilde{\mathsf{p}} - \bar{\mathsf{p}}_{h}]\|_{U}^2 + \frac{\alpha}{2}\|\bar{\mathsf{u}} - \tilde{\mathsf{u}}\|_{U}^2.
\end{equation*}
Consequently, we have that $\alpha^2\|\bar{\mathsf{u}} - \tilde{\mathsf{u}}\|_{U}^2 \leq \|\mathcal{I}_{U'}^{U}D^{*}[\tilde{\mathsf{p}} - \bar{\mathsf{p}}_{h}]\|_{U}^2$. To bound the term $\|\mathcal{I}_{U'}^{U}D^{*}[\tilde{\mathsf{p}} - \bar{\mathsf{p}}_{h}]\|_{U}^2$, we introduce the variable $\hat{\mathsf{p}}\in V$ as the unique solution to
\begin{equation}\label{eq:hat_p_a_posteriori}
B^{*}\hat{\mathsf{p}} = O^{*}\mathcal{I}_{\mathcal{Y}}^{\mathcal{Y}'} (O\bar{\mathsf{y}}_{h} - \mathsf{y}_{d}) ~\text{ in } ~ Y'.
\end{equation}
With this variable at hand, we use the triangle inequality to obtain that
\begin{equation}\label{eq:estimate_I_II_a_posteriori}
\|\mathcal{I}_{U'}^{U}D^{*}[\tilde{\mathsf{p}} - \bar{\mathsf{p}}_{h}]\|_{U}^2
\leq 
2\|\mathcal{I}_{U'}^{U}D^{*}[\hat{\mathsf{p}} - \bar{\mathsf{p}}_{h}]\|_{U}^2 + 2\|\mathcal{I}_{U'}^{U}D^{*}[\tilde{\mathsf{p}} - \hat{\mathsf{p}}]\|_{U}^2 =: \mathsf{I} + \mathsf{II}.
\end{equation}
To bound $\mathsf{I}$, we note that $\bar{\mathsf{p}}_{h}$ corresponds to the discrete approximation of $\hat{\mathsf{p}}$ in $V_{h}$. Therefore, \cite[Theorem 4.1]{MR4090394} immediately yields that
$\mathsf{I} \lesssim \|O^{*}\mathcal{I}_{\mathcal{Y}}^{\mathcal{Y}'}(O\bar{\mathsf{y}}_{h} - \mathsf{y}_{d}) - B^{*}\bar{\mathsf{p}}_{h}\|_{Y'}^2$. To control the term $\mathsf{II}$, we invoke a stability estimate for $\tilde{\mathsf{p}} - \hat{\mathsf{p}}\in V$. This gives the bound $\mathsf{II} \lesssim \|\tilde{\mathsf{p}} - \hat{\mathsf{p}}\|_{V}^2 \lesssim \|\tilde{\mathsf{y}} - \bar{\mathsf{y}}_{h}\|_{Y}^2$. To control the term $\|\tilde{\mathsf{y}} - \bar{\mathsf{y}}_{h}\|_{Y}^2$, we introduce $\hat{\mathsf{y}}\in Y$ as the unique solution to $B\hat{\mathsf{y}} = D\bar{\mathsf{u}}_{h}$ in $V'$, and use the triangle inequality to obtain
\begin{equation*}
\|\tilde{\mathsf{y}} - \bar{\mathsf{y}}_{h}\|_{Y}^2 
\lesssim
\|\tilde{\mathsf{y}} - \hat{\mathsf{y}}\|_{Y}^2 + \|\hat{\mathsf{y}} - \bar{\mathsf{y}}_{h}\|_{Y}^2 
\lesssim
\|\tilde{\mathsf{y}} - \hat{\mathsf{y}}\|_{Y}^2 + \| D\bar{\mathsf{u}}_{h} - B\bar{\mathsf{y}}_{h}\|_{V'}^2,
\end{equation*}
upon using the fact that $\bar{\mathsf{y}}_{h}\in Y_{h}$ corresponds to a quasi-best approximation of $\hat{y}$ and thus $\|\hat{\mathsf{y}} - \bar{\mathsf{y}}_{h}\|_{Y} \eqsim \|B\hat{\mathsf{y}} - B\bar{\mathsf{y}}_{h}\|_{V'} = \| D\bar{\mathsf{u}}_{h} - B\bar{\mathsf{y}}_{h}\|_{V'}$. Therefore, we invoke the stability estimate of the problem that $\tilde{\mathsf{y}} - \hat{\mathsf{y}}\in V$ solves to arrive at
\begin{equation*}
\mathsf{II} 
\lesssim \|\tilde{\mathsf{p}} - \hat{\mathsf{p}}\|_{V}^2 
\lesssim \|\tilde{\mathsf{y}} - \bar{\mathsf{y}}_{h}\|_{Y}^2
\lesssim \|\tilde{\mathsf{u}} - \bar{\mathsf{u}}_{h}\|_{U}^2 +  \| D\bar{\mathsf{u}}_{h} - B\bar{\mathsf{y}}_{h}\|_{V'}^2.
\end{equation*}
We conclude by using the estimates obtained for $\mathsf{I}$ and $\mathsf{II}$ in \eqref{eq:estimate_I_II_a_posteriori}, and using the resulting estimation in $\alpha^2\|\bar{\mathsf{u}} - \tilde{\mathsf{u}}\|_{U}^2 \leq \|\mathcal{I}_{U'}^{U}D^{*}[\tilde{\mathsf{p}} - \bar{\mathsf{p}}_{h}]\|_{U}^2$. This shows that
\begin{multline*}
\alpha^2\|\bar{\mathsf{u}} - \bar{\mathsf{u}}_{h}\|_{U}^2 
\leq
2\alpha^2\|\bar{\mathsf{u}} - \tilde{\mathsf{u}}\|_{U}^2  + 2\alpha^2\|\tilde{\mathsf{u}} - \bar{\mathsf{u}}_{h}\|_{U}^2\\
\lesssim
(1 + \alpha^2)\|\tilde{\mathsf{u}} - \bar{\mathsf{u}}_{h}\|_{U}^2 + \| D\bar{\mathsf{u}}_{h} - B\bar{\mathsf{y}}_{h}\|_{V'}^2 + \|O^{*}\mathcal{I}_{\mathcal{Y}}^{\mathcal{Y}'}(O\bar{\mathsf{y}}_{h} - \mathsf{y}_{d}) - B^{*}\bar{\mathsf{p}}_{h}\|_{Y'}^2.
\end{multline*}

\emph{Step 2.} (estimate of $\|\bar{\mathsf{y}} - \bar{\mathsf{y}}_{h}\|_{Y}$). We invoke the auxiliary term $\hat{\mathsf{y}}$, solution to $B\hat{\mathsf{y}} = D\bar{\mathsf{u}}_{h}$ in $V'$, in combination with the triangle inequality, to obtain
\begin{equation*}
\|\bar{\mathsf{y}} - \bar{\mathsf{y}}_{h}\|_{Y} 
\leq
\|\bar{\mathsf{y}} - \hat{\mathsf{y}}\|_{Y} + \|\hat{\mathsf{y}} - \bar{\mathsf{y}}_{h}\|_{Y}.
\end{equation*}
Hence, using the stability estimate $\|\bar{\mathsf{y}} - \hat{\mathsf{y}}\|_{Y}\lesssim \|\bar{\mathsf{u}} - \bar{\mathsf{u}}_{h}\|_{U}$ and that $\|\hat{\mathsf{y}} - \bar{\mathsf{y}}_{h}\|_{Y} \eqsim \| D\bar{\mathsf{u}}_{h} - B\bar{\mathsf{y}}_{h}\|_{V'}$ we conclude 
\begin{equation}\label{eq:y-y_h_aposteriori}
\|\bar{\mathsf{y}} - \bar{\mathsf{y}}_{h}\|_{Y}^2 \lesssim
\|D\bar{\mathsf{u}}_{h} - B\bar{\mathsf{y}}_{h}\|_{V'}^2 + \|\bar{\mathsf{u}} - \bar{\mathsf{u}}_{h}\|_{U}^2.
\end{equation}

\emph{Step 3.} (estimate of $\|\bar{\mathsf{p}} - \bar{\mathsf{p}}_{h}\|_{V}$). We follow similar arguments to the ones that led to \eqref{eq:y-y_h_aposteriori}. For brevity, we skip details.
\end{proof}

\begin{remark}[on the existence of $\tilde{\mathsf{u}}$]
Depending on the particular structure of the admissible set $U_{ad}$, it is possible to compute the auxiliary solution $\tilde{\mathsf{u}}$ by means of a projection operator and the discrete adjoint state $\bar{\mathsf{p}}_{h}$; see, e.g., section \ref{sec:a_posteriori_Poisson}.
\end{remark}

\begin{remark}[variational discretization]
When the variational discretization approach \cite{MR2122182} is considered, we have that $\tilde{\mathsf{u}}=\bar{\mathsf{u}}_{h}$ and thus $\|\tilde{\mathsf{u}} - \bar{\mathsf{u}}_{h}\|_{U} = 0$. 
\end{remark}

We now provide efficiency estimates.

\begin{theorem}[efficiency estimates]\label{thm:efficiency_estimates}
In the framework of Theorem \ref{thm:reliability_estimates}, we have 
\begin{equation*}
\|\tilde{\mathsf{u}} - \bar{\mathsf{u}}_{h}\|_{U}
\leq
\|\tilde{\mathsf{u}} - \bar{\mathsf{u}}\|_{U} + \|\bar{\mathsf{u}} - \bar{\mathsf{u}}_{h}\|_{U}, \qquad
\|D\bar{\mathsf{u}}_{h} - B\bar{\mathsf{y}}_{h}\|_{V'}
\lesssim 
\|\bar{\mathsf{y}} - \bar{\mathsf{y}}_{h}\|_{Y} + \|\bar{\mathsf{u}} - \bar{\mathsf{u}}_{h}\|_{U}, 
\end{equation*}
and
\begin{equation*}
\|O^{*}\mathcal{I}_{\mathcal{Y}}^{\mathcal{Y}'}(O\bar{\mathsf{y}}_{h} - \mathsf{y}_{d}) - B^{*}\bar{\mathsf{p}}_{h}\|_{Y'} 
\lesssim
\|\bar{\mathsf{p}} - \bar{\mathsf{p}}_{h}\|_{V} + \|\bar{\mathsf{y}} - \bar{\mathsf{y}}_{h}\|_{Y}.
\end{equation*}
\end{theorem}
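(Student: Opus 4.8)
\textbf{Proof plan for the efficiency estimates (Theorem \ref{thm:efficiency_estimates}).}

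The plan is to establish each of the three estimates separately, in each case by inserting the exact continuous quantity and using the continuous equations together with stability of the relevant operator. The first bound is immediate: it is nothing but the triangle inequality applied to $\tilde{\mathsf{u}} - \bar{\mathsf{u}}_{h} = (\tilde{\mathsf{u}} - \bar{\mathsf{u}}) + (\bar{\mathsf{u}} - \bar{\mathsf{u}}_{h})$, so there is no real work there.

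For the second bound I would use the state equation $B\bar{\mathsf{y}} = D\bar{\mathsf{u}}$ in $V'$ (equivalently $b(\bar{\mathsf{y}},v) = \langle D\bar{\mathsf{u}},v\rangle_{V',V}$ for all $v\in V$). Writing $D\bar{\mathsf{u}}_{h} - B\bar{\mathsf{y}}_{h} = D(\bar{\mathsf{u}}_{h} - \bar{\mathsf{u}}) + B(\bar{\mathsf{y}} - \bar{\mathsf{y}}_{h})$ and taking the $V'$-norm, the triangle inequality gives $\|D\bar{\mathsf{u}}_{h} - B\bar{\mathsf{y}}_{h}\|_{V'} \leq \|D\|\,\|\bar{\mathsf{u}} - \bar{\mathsf{u}}_{h}\|_{U} + \|B\|\,\|\bar{\mathsf{y}} - \bar{\mathsf{y}}_{h}\|_{Y}$, which is the claimed bound with hidden constants $\|D\|$ and $\|B\|$ (both finite since $D$ and $B$ are bounded). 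The third bound is completely analogous: using the continuous adjoint equation \eqref{eq:adjoint_eq}, i.e. $B^{*}\bar{\mathsf{p}} = O^{*}\mathcal{I}_{\mathcal{Y}}^{\mathcal{Y}'}(O\bar{\mathsf{y}} - \mathsf{y}_{d})$ in $Y'$, I would write
\begin{equation*}
O^{*}\mathcal{I}_{\mathcal{Y}}^{\mathcal{Y}'}(O\bar{\mathsf{y}}_{h} - \mathsf{y}_{d}) - B^{*}\bar{\mathsf{p}}_{h}
= O^{*}\mathcal{I}_{\mathcal{Y}}^{\mathcal{Y}'}O(\bar{\mathsf{y}}_{h} - \bar{\mathsf{y}}) + B^{*}(\bar{\mathsf{p}} - \bar{\mathsf{p}}_{h}),
\end{equation*}
and then bound the $Y'$-norm by $\|O\|^{2}\|\bar{\mathsf{y}} - \bar{\mathsf{y}}_{h}\|_{Y} + \|B\|\,\|\bar{\mathsf{p}} - \bar{\mathsf{p}}_{h}\|_{V}$, using that $\mathcal{I}_{\mathcal{Y}}^{\mathcal{Y}'}$ is an isometry and $O$, $B^{*}$ are bounded.

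I do not expect any genuine obstacle here: all three estimates are essentially triangle inequalities combined with the boundedness of the fixed operators $B$, $D$, $O$ and the continuous state/adjoint equations, which are available from the hypotheses of Theorem \ref{thm:reliability_estimates}. The only point requiring a little care is to make sure the correct continuous equation is inserted in each case (state equation for the $V'$-residual, adjoint equation for the $Y'$-residual) so that the fluctuating data terms cancel and only norms of already-estimated error quantities remain; once that bookkeeping is done the result is immediate. Since these are routine manipulations the authors are likely to state them briefly, which matches the terse style of the surrounding a posteriori section.
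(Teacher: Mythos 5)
Your proposal is correct, and all three bounds go through as you describe; however, your route for the second and third estimates is genuinely different from the paper's. You decompose the residual directly, e.g.\ $D\bar{\mathsf{u}}_{h} - B\bar{\mathsf{y}}_{h} = D(\bar{\mathsf{u}}_{h}-\bar{\mathsf{u}}) + B(\bar{\mathsf{y}}-\bar{\mathsf{y}}_{h})$ and $O^{*}\mathcal{I}_{\mathcal{Y}}^{\mathcal{Y}'}(O\bar{\mathsf{y}}_{h}-\mathsf{y}_{d}) - B^{*}\bar{\mathsf{p}}_{h} = O^{*}\mathcal{I}_{\mathcal{Y}}^{\mathcal{Y}'}O(\bar{\mathsf{y}}_{h}-\bar{\mathsf{y}}) + B^{*}(\bar{\mathsf{p}}-\bar{\mathsf{p}}_{h})$, and then only invoke boundedness of $B$, $D$, $O$. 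The paper instead routes each residual through an auxiliary exact solution ($\hat{\mathsf{y}}$ with $B\hat{\mathsf{y}}=D\bar{\mathsf{u}}_{h}$, respectively $\hat{\mathsf{p}}$ with $B^{*}\hat{\mathsf{p}}=O^{*}\mathcal{I}_{\mathcal{Y}}^{\mathcal{Y}'}(O\bar{\mathsf{y}}_{h}-\mathsf{y}_{d})$), uses the equivalence of the dual-norm residual with $\|\hat{\mathsf{y}}-\bar{\mathsf{y}}_{h}\|_{Y}$ (resp.\ the bound $\lesssim\|\hat{\mathsf{p}}-\bar{\mathsf{p}}_{h}\|_{V}$ from the cited adjoint-error result), and then applies the triangle inequality together with a stability estimate such as $\|\hat{\mathsf{y}}-\bar{\mathsf{y}}\|_{Y}\lesssim\|\bar{\mathsf{u}}-\bar{\mathsf{u}}_{h}\|_{U}$. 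Your argument is more elementary: it needs only the continuity of the operators and the exact state/adjoint equations, not the inf-sup condition or the external reference, and it avoids introducing any auxiliary solutions. What the paper's approach buys is consistency with the reliability proof (the same auxiliary variables $\hat{\mathsf{y}}$, $\hat{\mathsf{p}}$ and the same residual-error equivalences are reused there), but for the efficiency direction alone your direct decomposition is sufficient and arguably cleaner.
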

\begin{proof}  
The estimate for $\|\tilde{\mathsf{u}} - \bar{\mathsf{u}}_{h}\|_{U}$ follows from the use of the triangle inequality. 

To estimate $\|D\bar{\mathsf{u}}_{h} - B\bar{\mathsf{y}}_{h}\|_{V'}$, we use the auxiliary variable $\hat{\mathsf{y}}\in Y$, defined as the unique solution to $B\hat{\mathsf{y}} = D\bar{\mathsf{u}}_{h}$ in $V'$, that $\|\hat{\mathsf{y}} - \bar{\mathsf{y}}_{h}\|_{Y} \eqsim \| D\bar{\mathsf{u}}_{h} - B\bar{\mathsf{y}}_{h}\|_{V'}$, and the stability estimate associated to the problem that $\hat{\mathsf{y}} - \bar{\mathsf{y}}\in Y$ solves. These ingredients yield
\begin{equation*}
\|D\bar{\mathsf{u}}_{h} - B\bar{\mathsf{y}}_{h}\|_{V'}
\lesssim 
\|\hat{\mathsf{y}} - \bar{\mathsf{y}}_{h}\|_{Y}
\leq 
\|\hat{\mathsf{y}} - \bar{\mathsf{y}}\|_{Y} + \|\bar{\mathsf{y}} - \bar{\mathsf{y}}_{h}\|_{Y}
\lesssim 
\|\bar{\mathsf{u}} - \bar{\mathsf{u}}_{h} \|_{U} + \|\bar{\mathsf{y}} - \bar{\mathsf{y}}_{h}\|_{Y}.
\end{equation*}
To control the last term, we invoke the auxiliary variable $\hat{\mathsf{p}}\in V$ (solution to \eqref{eq:hat_p_a_posteriori}), \cite[Theorem 4.1]{MR4090394}, and the triangle inequality to arrive at
\begin{equation*}
\|O^{*}\mathcal{I}_{\mathcal{Y}}^{\mathcal{Y}'}(O\bar{\mathsf{y}}_{h} - \mathsf{y}_{d}) - B^{*}\bar{\mathsf{p}}_{h}\|_{Y'} 
\lesssim
\|\hat{\mathsf{p}} - \bar{\mathsf{p}}_{h}\|_{V}
\leq
\|\hat{\mathsf{p}} - \bar{\mathsf{p}}\|_{V} + \|\bar{\mathsf{p}} - \bar{\mathsf{p}}_{h}\|_{V}.
\end{equation*}
This, in view of the stability estimate $\|\hat{\mathsf{p}} - \bar{\mathsf{p}}\|_{V}\lesssim \|\bar{\mathsf{y}}_{h} - \bar{\mathsf{y}}\|_{Y}$, allows us to conclude.
\end{proof}

\begin{remark}[computability of the error estimators]\label{remark:error_estimators}
We note that the terms $\|D\bar{\mathsf{u}}_{h} - B\bar{\mathsf{y}}_{h}\|_{V'}$ and $\|O^{*}\mathcal{I}_{\mathcal{Y}}^{\mathcal{Y}'}(O\bar{\mathsf{y}}_{h} - \mathsf{y}_{d}) - B^{*}\bar{\mathsf{p}}_{h}\|_{Y'}$ are not feasibly computable. For practical reasons it is thus necessary to provide equivalent error estimators that are easy to evaluate. In the examples below (section \ref{sec:examples}) these terms can be localized following ideas from \cite{MR4090394}.
\end{remark}


\section{Examples}\label{sec:examples}

In this section we provide some model problems where the framework developed in sections \ref{sec:the_OCP} and \ref{sec:discrete_approx} can be applied. Remarkable examples that are not treated here, but that involve similar functional settings for the state equation include convection-dominated diffusion, linear elasticity, and acoustics; see \cite[Section 4]{MR4090394}. Other examples that also fit our framework are the ones that involve \emph{finite-dimensional controls}: controls that are represented by finitely many real values. These controls are used in various real applications of optimal control theory, where it can be difficult to practically implement control functions that vary arbitrarily in space \cite{MR2440722}.

In what follows, we let $\Omega\subset \mathbb{R}^{d}$, with $d\geq 2$, and  $\mathscr{T}$ be given as in section \ref{sec:broken_Sob_spaces}. We mention that the approximation spaces will be piecewise polynomial with respect to the mesh $\mathscr{T}$ (see section \ref{sec:approximation_spaces}); convergence thus hinges on a sequence of discrete spaces and meshes. Consequently, the examples involve a mesh $\mathscr{T}$ which is not ﬁxed. 


\subsection{Poisson problem}\label{sec:Poisson_problem}
We study the control problem \eqref{def:opt_cont_prob} subject to the Poisson problem which is written as the first-order system,
\begin{equation*}
\text{div } \boldsymbol\sigma = \mathsf{u} \text{ in } \Omega, \quad \boldsymbol\sigma + \nabla\mathsf{y}_{o} = 0  \text{ in } \Omega, \quad \mathsf{y}_{o} = 0 \text{ on }\partial\Omega.
\end{equation*}

We consider the following spaces for the optimal control problem:
\[
U = \mathcal{Y} = L^2(\Omega), 
\quad Y = L^2(\Omega) \times L^2(\Omega)^d \times H_{0}^{1/2}(\partial\mathscr{T}) \times H^{-1/2}(\partial\mathscr{T}), 
\]
and $V = H^1(\mathscr{T}) \times H(\text{div},\mathscr{T})$; see section \ref{sec:broken_Sob_spaces}. We equip  $Y$ and $V$ with the norms
\begin{align*}
\|\mathsf{y}\|_{Y}^2 :&= \|\mathsf{y}_{o}\|_{L^2(\Omega)}^2 + \|\boldsymbol\sigma\|_{L^2(\Omega)^d}^2 + \|\hat{\mathsf{y}}_{o}\|_{1/2,\partial\mathscr{T}}^2 + \|\hat{\sigma}\|_{-1/2,\partial\mathscr{T}}^2, \quad \forall \mathsf{y}=\!(\mathsf{y}_{o},\boldsymbol\sigma,\hat{\mathsf{y}}_o,\hat{\sigma})\in Y, \\
\|v\|_{V}^2 :&= \|\mathsf{v}\|_{H^1(\mathscr{T})}^2 + \|\mathbf{v}\|_{H(\text{div},\mathscr{T})}^2, \quad  \forall v=\!(\mathsf{v},\mathbf{v})\in V.
\end{align*}
The admissible set $U_{ad}$ is defined by $U_{ad}:=\{u \in L^2(\Omega):  \texttt{a} \leq u(x) \leq \texttt{b} \text{ a.e. in } \Omega \}$, where the bounds $\texttt{a},\texttt{b} \in \mathbb{R}$ are such that $\texttt{a} < \texttt{b}$.

We define the bilinear form $b:Y\times V\to \mathbb{R}$ on the basis of an \emph{ultraweak formulation} of the Poisson equation \cite[Section 3.1]{MR2837484}:
\begin{equation}\label{def:bilinear_Poisson}
b(\mathsf{y},v) := (\boldsymbol\sigma, \mathbf{v} - \nabla \mathsf{v})_{\mathscr{T}} - (\mathsf{y}_{o}, \text{div }\mathbf{v})_{\mathscr{T}} +  \langle \hat{\mathsf{y}}_{o}, \mathbf{v}\cdot \mathbf{n}\rangle_{\partial\mathscr{T}} + \langle \mathsf{v}, \hat{\sigma}\rangle_{\partial\mathscr{T}}.
\end{equation}
Here, $\mathsf{y}=(\mathsf{y}_{o},\boldsymbol\sigma,\hat{\mathsf{y}}_o,\hat{\sigma})\in Y$, $v=(\mathsf{v},\mathbf{v})\in V$, $(\cdot,\cdot)_{\mathscr{T}}:=\sum_{T\in\mathscr{T}}(\cdot,\cdot)_{L^2(T)}$, and $\langle \cdot,\cdot\rangle_{\partial\mathscr{T}}:=\sum_{T\in\mathscr{T}}\langle \cdot,\cdot\rangle_{\partial T}$.

The observation operator $O$ is defined by $O\mathsf{y}=\mathsf{y}_{o}$, i.e., the mapping $O$ yields the first component of the vector $\mathsf{y}$. Finally, the mapping $D$ is defined as the continuous embedding $L^2(\Omega)\hookrightarrow L^2(\Omega)\times \{0\} \hookrightarrow V'$. Hence, $\langle D\mathsf{u},v\rangle_{V',V} = (\mathsf{u},\mathsf{v})_{L^2(\Omega)}$ and we have as state equation in \eqref{def:opt_cont_prob} the problem: $b(\mathsf{y},v) = (\mathsf{u},\mathsf{v})_{L^2(\Omega)}$ for all $v=(\mathsf{v},\mathbf{v})\in V$.

On the basis of the previous ingredients it follows, in view of \cite[Section 3]{MR3521055}, that the bilinear form \eqref{def:bilinear_Poisson} satisfies the inf-sup condition \eqref{eq:inf_sup_b}. Theorems \ref{thm:existence} and \ref{thm:1st_order_opt_cond} thus yield the existence and uniqueness of an optimal solution pair $(\bar{\mathsf{y}},\bar{\mathsf{u}})\in Y\times U_{ad}$ for problem \eqref{def:opt_cont_prob} satisfying, with $\bar{\lambda} := (\bar{\zeta},\bar{\boldsymbol\mu},\overline{\hat{\zeta}},\overline{\hat{\mu}})\in Y$ and $\bar{\mathsf{p}} :=(\bar{\mathsf{p}}_{o},\bar{\mathbf{p}}) \in V$, the first order optimality condition \eqref{eq:var_ineq}; the latter being written now as follows: $(\bar{\mathsf{p}}_{o} + \alpha\bar{\mathsf{u}}, \mathsf{u} - \bar{\mathsf{u}})_{L^2(\Omega)} \geq 0$ for all $\mathsf{u}\in U_{ad}$. In addition, this variational inequality yields a characterization of the optimal control $\bar{\mathsf{u}}$ \cite[Theorem 2.28]{MR2583281}:
\begin{equation*}
\bar{\mathsf{u}}(x) = \Pi_{[\texttt{a},\texttt{b}]}(-\alpha^{-1}\bar{\mathsf{p}}_{o}(x)):=\min\{\texttt{b},\max\{\texttt{a},-\alpha^{-1}\bar{\mathsf{p}}_{o}(x)\}\} \quad \text{for a.e. } x\in\Omega.
\end{equation*}
Here, $\Pi_{[\texttt{a},\texttt{b}]}: L^2(\Omega) \to U_{ad}$. We immediately notice that, since $\bar{\mathsf{p}}_{o}\in H^{1}(\mathscr{T})$, $\bar{\mathsf{u}} \in H^{1}(\mathscr{T})$; see, for instance, \cite[Theorem A.1]{MR1786735}.


\subsubsection{Discrete approximation}\label{sec:discrete_Poisson}
 Assume that each element $T\in\mathscr{T}$ is a simplex. We consider the following conforming discrete spaces (see section \ref{sec:approximation_spaces})
\begin{equation*}
Y_{h}:=\mathcal{P}^{k}(\mathscr{T})\times \mathcal{P}^{k}(\mathscr{T})^{d}\times \mathcal{P}^{k+1}_{c,0}(\partial\mathscr{T})\times \mathcal{P}^{k}(\partial\mathscr{T}), \quad V_{h}:=\mathcal{P}^{k_{1}}(\mathscr{T})\times\mathcal{P}^{k_{2}}(\mathscr{T})^{d},
\end{equation*}
$\mathcal{Y}_{h}:=\mathcal{P}^{k}(\mathscr{T})$, and $U_{h}:=\mathcal{P}^{k}(\mathscr{T})$, with $k\in\mathbb{N}_{0}$ and $k_{1},k_{2}\in\mathbb{N}$ satisfying $k_{1} \geq k + d +1$ and $k_{2} \geq k + 2$. We immediately mention that dim($V_{h}$) $\geq$ dim($Y_{h}$). A Fortin operator $\Pi\colon V\to V_h$ (see Definition \ref{def:fortin_op}) for the previous chosen discrete spaces exists.
For $v = (\mathsf{v},\mathbf{v})\in V$ we set $\Pi v = (\Pi^\nabla \mathsf{v},\Pi^{\mathrm{div}}\mathbf{v})$, where $\Pi^\nabla$ is the Fortin operator designed in~\cite[Section 3.1]{2023arXiv230113021F} and $\Pi^{\mathrm{div}}$ is the operator from~\cite[Lemma~3.3]{MR3143683}. 
Besides the statements of Definition \ref{def:fortin_op}, the operator $\Pi$ also satisfies
\begin{equation}\label{eq:Fortin_op_control}
\langle D\bar{\mathsf{u}}_{h},\Pi v \rangle_{V',V}= (\bar{\mathsf{u}}_{h},\Pi^{\nabla} \mathsf{v})_{L^2(\Omega)} = (\bar{\mathsf{u}}_{h},\mathsf{v})_{L^2(\Omega)}= \langle D\bar{\mathsf{u}}_{h}, v \rangle_{V',V} \quad \forall v\in V,
\end{equation}
which directly follows from~\cite[Eq. (7c)]{2023arXiv230113021F}.
Therefore, Theorem \ref{thm:existence_discrete} guarantees the existence of a unique discrete optimal solution $(\bar{\mathsf{y}}_{h},\bar{\mathsf{u}}_{h})\in Y_{h}\times U_{ad,h}$ of \eqref{def:discrete_opt_cont_prob_i}--\eqref{def:discrete_opt_cont_prob_ii}. Here, $\bar{\mathsf{y}}_{h}=(\bar{\mathsf{y}}_{o,h},\bar{\boldsymbol\sigma}_{h},\overline{\hat{\mathsf{y}}}_{o,h},\overline{\hat{\sigma}}_{h})$. Moreover, the discrete optimal control $\bar{\mathsf{u}}_{h}$ satisfies the discrete variational inequality $(\bar{\mathsf{p}}_{o,h} + \alpha\bar{\mathsf{u}}_{h}, \mathsf{u}_{h} - \bar{\mathsf{u}}_{h})_{L^2(\Omega)} \geq 0$ for all $\mathsf{u}_{h}\in U_{ad,h}$, where
$\bar{\lambda}_{h} := (\bar{\zeta}_{h},\bar{\boldsymbol\mu}_{h},\overline{\hat{\zeta}}_{h},\overline{\hat{\mu}}_{h})\in Y_{h}$ and $\bar{\mathsf{p}}_{h} :=(\bar{\mathsf{p}}_{o,h},\bar{\mathbf{p}}_{h}) \in V_{h}$ solve the discrete adjoint equation \eqref{eq:discrete_adj_eq} with $\mathsf{y}_{h}=\bar{\mathsf{y}}_{h}$.


\subsubsection{A priori error estimates}\label{sec:a_priori_Poisson}
In this section we assume that $\Omega\subset\mathbb{R}^{d}$ ($d=2,3)$ is a convex polygonal/polyhedral domain and that $\mathsf{y}_{d} \in H^1(\mathscr{T})$. In what follows we study regularity properties of the optimal solution to obtain an order of convergence for the error in terms of the discretization parameter $h$, associated to $\mathscr{T}$.

Let us start by estimating the term  $\inf_{w_{h}\in Y_{h}}\|\bar{\mathsf{y}} - w_{h}\|_{Y}^2$ in \eqref{eq:sharp_estimate_control}; cf. estimate \eqref{eq:estimate_control_apriori}. Since $\bar{\mathsf{y}}$ solves the Poisson equation with $\bar{\mathsf{u}}\in L^2(\Omega)$ as source term, we can invoke regularity results from \cite[Theorem 3.2.1.2]{MR775683} when $d=2$ and \cite[Section 4.3.1]{MR2641539} when $d=3$, to conclude that $\bar{\mathsf{y}}_{o}\in H_0^1(\Omega)\cap H^2(\Omega)$ and $\bar{\boldsymbol\sigma}\in H^1(\Omega)$. Hence, a direct application of \cite[Theorem 6]{MR3977484} yields that $\inf_{w_{h}\in Y_{h}}\|\bar{\mathsf{y}} - w_{h}\|_{Y} \lesssim h$.

We now estimate $\inf_{v_{h}\in V_{h}}\|\bar{\mathsf{p}} - v_{h}\|_{V}^2$ by proving  regularity properties for $\bar{\mathsf{p}} = (\bar{\mathsf{p}}_{o},\bar{\mathbf{p}}) \in V$. Since $\bar{\mathsf{p}}$ solves the Poisson equation with $\bar{\mathsf{y}}_{o} - \mathsf{y}_{d} \in L^2(\Omega)$ as source term, the convexity of $\Omega$ implies that $\bar{\mathsf{p}}_{o}\in H_0^1(\Omega)\cap H^2(\Omega)$ and $\bar{\mathbf{p}}\in H^1(\Omega)$; see \cite[Theorem 3.2.1.2]{MR775683} and \cite[Section 4.3.1]{MR2641539}. In addition, we have that div $\bar{\mathbf{p}}=\bar{\mathsf{y}}-\mathsf{y}_{d}\in H^{1}(\mathscr{T})$, since $\mathsf{y}_{d}\in H^1(\mathscr{T})$. Consequently, standard estimates for projection operators (see, e.g., \cite[Section 3.1 and eq. (3.1)]{MR3977484}) allow us to conclude that $\inf_{v_{h}\in V_{h}}\|\bar{\mathsf{p}} - v_{h}\|_{V}\lesssim h$.

Let us bound now  $\inf_{w_{h}\in Y_{h}}\|\bar{\lambda} - w_{h}\|_{Y}^2$ in \eqref{eq:sharp_estimate_control}. To study the regularity of $\bar{\lambda} = (\bar{\zeta},\bar{\boldsymbol\mu},\overline{\hat{\zeta}},\overline{\hat{\mu}})$, we first note that  \cite[Proposition 3.3]{MR4090394}
\begin{equation*}
\bar{\zeta} =  \bar{\mathsf{y}}_{o} - \mathsf{y}_{d} + \mathsf{e}, 
\qquad \bar{\boldsymbol\mu} = \bar{\mathbf{p}} + \mathbf{e}, 
\qquad \bar{\hat{\zeta}} = \mathsf{e}, 
\qquad \bar{\hat{\mu}} =   2\bar{\mathbf{p}}\cdot\mathbf{n} + \mathbf{e}\cdot\mathbf{n},
\end{equation*}  
where $\mathsf{e}\in H_0^1(\Omega)$ satisfies $-\Delta \mathsf{e} = \bar{\mathsf{p}}_o + 2(\bar{\mathsf{y}}_{o} - \mathsf{y}_{d})$ and $\mathbf{e} = -\nabla \mathsf{e}$. Moreover, since $\Omega$ is convex, we have that $\mathsf{e}\in H_0^1(\Omega)\cap H^2(\Omega)$ and that $\bar{\boldsymbol\mu}\in H^1(\Omega)^{d}$. Additionally, the fact that $\mathsf{y}_{d}\in H^1(\mathscr{T})$ implies that $\bar{\zeta}\in H^1(\mathscr{T})$. These regularities and the fact that $\bar{\mathsf{p}}_{o},\mathsf{e}\in H^2(\Omega)$ would seem enough to provide, at least, a convergence rate of order $h$ for $\inf_{w_{h}\in Y_{h}}\|\bar{\lambda} - w_{h}\|_{Y}$. Unfortunately, this is not what the authors show in \cite[Corollary 3.4]{MR4090394}: they require that $\bar{\mathsf{p}}_{o},\mathsf{e}\in H^3(\Omega)$ to obtain the previously mentioned rate of convergence $h$ (see \cite[estimate (63)]{MR4090394}). This in mainly due to the estimation of the error approximation associated to the term $\bar{\hat{\mu}}\in H^{-1/2}(\partial\mathscr{T})$. However, using the previous regularities and the arguments provided in \cite[Theorem 6]{MR3977484} to bound the norm $\|\cdot\|_{-1/2,\partial\mathscr{T}}$, we obtain the estimate  $\inf_{w_{h}\in Y_{h}}\|\bar{\lambda} - w_{h}\|_{Y}\lesssim h$.

To bound the terms $\|(1-\pi_{U_{h}})\bar{\mathsf{p}}_{o,h}\|_{L^2(\Omega)}^{2}$ and  $\|(1-\pi_{U_{h}})\bar{\mathsf{u}}\|_{L^2(\Omega)}^2$ in \eqref{eq:sharp_estimate_control}, we use that $\bar{\mathsf{p}}_{o,h},\bar{\mathsf{u}} \in H^{1}(\mathscr{T})$, and approximation properties of the projection operator $\pi_{U_{h}}$. These result in $\|(1-\pi_{U_{h}})\bar{\mathsf{p}}_{o,h}\|_{L^2(\Omega)} + \|(1-\pi_{U_{h}})\bar{\mathsf{u}}\|_{L^2(\Omega)}\lesssim h(\|\bar{\mathsf{p}}_{o,h}\|_{H^1(\mathscr{T})}+\|\nabla \bar{\mathsf{u}}\|_{L^2(\Omega)})$. We immediately note that the term $\|\bar{\mathsf{p}}_{o,h}\|_{H^1(\mathscr{T})}$ is uniformly bounded since $\|\bar{\mathsf{p}}_{o,h}\|_{H^1(\mathscr{T})} \leq \|\bar{\mathsf{p}}_{h}\|_{V} \lesssim \|\bar{\mathsf{y}}_{h}\|_{Y} + \|y_{d}\|_{L^2(\Omega)}\lesssim \max\{|\texttt{a}|,|\texttt{b}|\}|\Omega| + \|y_{d}\|_{L^2(\Omega)}$, where $|\Omega|$ denotes the Lebesgue measure of $\Omega$.

In view of the previous error estimates we have proved that $\|\bar{\mathsf{u}} - \bar{\mathsf{u}}_{h}\|_{L^2(\Omega)} \lesssim h$  which, in light of Theorem \ref{thm:a_priori_estimates}, yields $ \|\bar{\mathsf{u}} - \bar{\mathsf{u}}_{h}\|_{L^2(\Omega)} + \|\bar{\mathsf{y}} - \bar{\mathsf{y}}_{h}\|_{Y} + \|\bar{\mathsf{p}} - \bar{\mathsf{p}}_{h}\|_{V} \lesssim h$.

Finally, we note that the assumption $\mathsf{y}_{d}\in H^1(\mathscr{T})$ is quite restrictive and unusual when providing a priori error estimates for an optimal control problem subject to the Poisson equation. This fact motivates, under the framework of the DPG method, the design of a posteriori error estimators for this particular problem.


\subsubsection{A posteriori error estimates}\label{sec:a_posteriori_Poisson}

Let us introduce, on the basis of the optimal discrete adjoint state $\bar{\mathsf{p}}_{h} = (\bar{\mathsf{p}}_{o,h},\bar{\mathbf{p}}_{h})\in V_{h}$, $\tilde{\mathsf{u}}:= \Pi_{[\texttt{a},\texttt{b}]}(-\alpha^{-1}\bar{\mathsf{p}}_{o,h})\in U_{ad}$. It follows, from \cite[Lemma 2.26]{MR2583281}, that $\tilde{\mathsf{u}}$ solves the variational inequality \eqref{eq:ineq_tilde_u}. Consequently, the results of Theorems \ref{thm:reliability_estimates} and \ref{thm:efficiency_estimates} hold. 

We now give computable estimators that are equivalent to the residual terms present in Theorems \ref{thm:reliability_estimates} and \ref{thm:efficiency_estimates} (see Remark \ref{remark:error_estimators}). For the discretization of the control variable, we define the estimator $\eta_{ct}:=\|\tilde{\mathsf{u}} - \bar{\mathsf{u}}_{h}\|_{L^2(\Omega)}$. For the estimator associated to the discrete state equation \eqref{eq:equivalent_discrete_st_eq} we state the following lemma, whose proof follows along similar arguments to the ones developed in \cite[Theorem 2.1]{MR3215064}. We note that for the following result to hold, property \eqref{eq:Fortin_op_control} for the Fortin operator $\Pi$ is essential.

\begin{lemma}[equivalent representation]\label{lemma:equiv_rep}
We have that $\| D\bar{\mathsf{u}}_{h} - B\bar{\mathsf{y}}_{h}\|_{V'} \eqsim \| D\bar{\mathsf{u}}_{h} - B\bar{\mathsf{y}}_{h}\|_{V_{h}^{\prime}}$. Moreover, the latter term is equal to $\|\bar{\varepsilon}_{h}\|_{V}$, where $(\bar{\varepsilon}_{h},\bar{\mathsf{y}}_{h})\in V_{h}\times Y_{h}$ solves the discrete state equation \eqref{eq:equivalent_discrete_st_eq} with $\mathsf{u}_{h}$ replaced by $\bar{\mathsf{u}}_{h}$.
\end{lemma}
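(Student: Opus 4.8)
The plan is to prove the norm equivalence by a duality argument that uses the Fortin operator together with property~\eqref{eq:Fortin_op_control}, and then to identify the discrete dual norm with $\|\bar{\varepsilon}_{h}\|_{V}$ via Riesz representation in $V_{h}$. Throughout, abbreviate $F := D\bar{\mathsf{u}}_{h} - B\bar{\mathsf{y}}_{h}\in V'$, so that $\langle F,v\rangle_{V',V} = \langle D\bar{\mathsf{u}}_{h},v\rangle_{V',V} - b(\bar{\mathsf{y}}_{h},v)$ for all $v\in V$, and recall that $\|F\|_{V'} = \sup_{v\in V\setminus\{0\}}\langle F,v\rangle_{V',V}/\|v\|_{V}$, with the analogous formula for $\|F\|_{V_{h}^{\prime}}$ where the supremum runs over $V_{h}\setminus\{0\}$.

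First I would record the trivial bound $\|F\|_{V_{h}^{\prime}}\le\|F\|_{V'}$, which holds merely because $V_{h}\subseteq V$. For the converse, I fix $v\in V$ and apply $\Pi$: since $\bar{\mathsf{y}}_{h}\in Y_{h}$, Definition~\ref{def:fortin_op} gives $b(\bar{\mathsf{y}}_{h},v) = b(\bar{\mathsf{y}}_{h},\Pi v)$, while property~\eqref{eq:Fortin_op_control} gives $\langle D\bar{\mathsf{u}}_{h},v\rangle_{V',V} = \langle D\bar{\mathsf{u}}_{h},\Pi v\rangle_{V',V}$. Subtracting the first identity from the second yields $\langle F,v\rangle_{V',V} = \langle F,\Pi v\rangle_{V',V}$, and hence
\[
\langle F,v\rangle_{V',V} = \langle F,\Pi v\rangle_{V',V} \le \|F\|_{V_{h}^{\prime}}\,\|\Pi v\|_{V} \le C_{\Pi}\,\|F\|_{V_{h}^{\prime}}\,\|v\|_{V}.
\]
Taking the supremum over $v\in V\setminus\{0\}$ gives $\|F\|_{V'}\le C_{\Pi}\|F\|_{V_{h}^{\prime}}$, so that $\|F\|_{V'}\eqsim\|F\|_{V_{h}^{\prime}}$. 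This is exactly where~\eqref{eq:Fortin_op_control} enters: the bare Fortin property handles only the term $b(\bar{\mathsf{y}}_{h},v-\Pi v)$ and would leave $\langle D\bar{\mathsf{u}}_{h},v-\Pi v\rangle_{V',V}$ uncontrolled.

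It remains to identify $\|F\|_{V_{h}^{\prime}}$ with $\|\bar{\varepsilon}_{h}\|_{V}$. Taking $\mathsf{u}_{h}=\bar{\mathsf{u}}_{h}$ in the first equation of~\eqref{eq:equivalent_discrete_st_eq} gives $(\bar{\varepsilon}_{h},v_{h})_{V} = \langle D\bar{\mathsf{u}}_{h},v_{h}\rangle_{V',V} - b(\bar{\mathsf{y}}_{h},v_{h}) = \langle F,v_{h}\rangle_{V',V}$ for all $v_{h}\in V_{h}$, i.e.\ $\bar{\varepsilon}_{h}$ is the Riesz representative in $\bigl(V_{h},(\cdot,\cdot)_{V}\bigr)$ of the restricted functional $F|_{V_{h}}$. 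Testing with $v_{h}=\bar{\varepsilon}_{h}$ gives $\|\bar{\varepsilon}_{h}\|_{V}^{2} = \langle F,\bar{\varepsilon}_{h}\rangle_{V',V}\le\|F\|_{V_{h}^{\prime}}\|\bar{\varepsilon}_{h}\|_{V}$, hence $\|\bar{\varepsilon}_{h}\|_{V}\le\|F\|_{V_{h}^{\prime}}$; conversely, for any $v_{h}\in V_{h}$ one has $\langle F,v_{h}\rangle_{V',V} = (\bar{\varepsilon}_{h},v_{h})_{V}\le\|\bar{\varepsilon}_{h}\|_{V}\|v_{h}\|_{V}$, so $\|F\|_{V_{h}^{\prime}}\le\|\bar{\varepsilon}_{h}\|_{V}$. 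Combining the two bounds yields $\|F\|_{V_{h}^{\prime}} = \|\bar{\varepsilon}_{h}\|_{V}$, which finishes the argument.

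I do not expect a genuine obstacle here: the argument is a streamlined instance of the DPG a posteriori identity of~\cite[Theorem 2.1]{MR3215064}, the only non-routine ingredient being the consistent treatment of the data term $D\bar{\mathsf{u}}_{h}$ under $\Pi$ afforded by~\eqref{eq:Fortin_op_control}. Avoiding that property would force one to estimate $\langle D\bar{\mathsf{u}}_{h},v-\Pi v\rangle_{V',V}$ directly, which is not available in general, so the verification of~\eqref{eq:Fortin_op_control} for the chosen Fortin operator (done in section~\ref{sec:discrete_Poisson}) is the substantive step underpinning this lemma.
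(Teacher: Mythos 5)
Your proof is correct and follows essentially the same route as the paper: the trivial inequality from $V_h\subseteq V$, the reverse bound via $\langle F,v\rangle_{V',V}=\langle F,\Pi v\rangle_{V',V}$ using the Fortin property together with \eqref{eq:Fortin_op_control}, and the identification of $\|F\|_{V_h'}$ with $\|\bar{\varepsilon}_h\|_V$ through the Riesz representation in $V_h$. The only difference is presentational — you spell out the Riesz argument that the paper dismisses as following "directly" from \eqref{eq:equivalent_discrete_st_eq}.
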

\begin{proof}
The last claim follows directly from~\eqref{eq:equivalent_discrete_st_eq}. It only remains to show that $\| D\bar{\mathsf{u}}_{h} - B\bar{\mathsf{y}}_{h}\|_{V'} \eqsim \| D\bar{\mathsf{u}}_{h} - B\bar{\mathsf{y}}_{h}\|_{V_{h}^{\prime}}$.
  We note that the following lines of proof only require the existence of a Fortin operator $\Pi$ (Definition~\ref{def:fortin_op}) that additionally satisfies \eqref{eq:Fortin_op_control}. Observe that
  \begin{align*}
    \| D\bar{\mathsf{u}}_{h} - B\bar{\mathsf{y}}_{h}\|_{V_h'} \leq \| D\bar{\mathsf{u}}_{h} - B\bar{\mathsf{y}}_{h}\|_{V'}
  \end{align*}
  by definition of the dual norm and $V_h \subseteq V$. The other direction can be seen from
  \begin{align*}
    \| D\bar{\mathsf{u}}_{h} - B\bar{\mathsf{y}}_{h}\|_{V'} 
    &= \sup_{0\neq v\in V} \frac{\langle D\bar{\mathsf{u}}_{h} - B\bar{\mathsf{y}}_{h},v\rangle_{V', V}}{\|v\|_V} 
    = \sup_{0\neq v\in V} \frac{\langle D\bar{\mathsf{u}}_{h} - B\bar{\mathsf{y}}_{h},\Pi v\rangle_{V', V}}{\|v\|_V} \\
    &\lesssim \sup_{0\neq v\in V} \frac{\langle D\bar{\mathsf{u}}_{h} - B\bar{\mathsf{y}}_{h},\Pi v\rangle_{V', V}}{\|\Pi v\|_V}
    \\ 
    &\leq \sup_{0\neq v\in V_h} \frac{\langle D\bar{\mathsf{u}}_{h} - B\bar{\mathsf{y}}_{h},v\rangle_{V', V}}{\|v\|_V} = \| D\bar{\mathsf{u}}_{h} - B\bar{\mathsf{y}}_{h}\|_{V_h'},
  \end{align*}
  where we have used the properties of $\Pi$. This concludes the proof.
\end{proof}

In light of the previous lemma, we define $\eta_{st}:=\|\bar{\varepsilon}_{h}\|_{V}$ as the error estimator associated to the discrete state equation.

Associated to the discrete adjoint equation, we define the error estimator
\begin{multline}\label{def:estimator_adj}
\eta_{adj}^2:=\|\text{div }\bar{\mathbf{p}}_{h} - \bar{\mathsf{y}}_{o,h} + \mathsf{y}_{d}\|_{L^2(\mathscr{T})}^2 + \|\bar{\mathbf{p}}_{h} + \nabla\bar{\mathsf{p}}_{o,h}\|_{L^{2}(\mathscr{T})^{d}}^{2} \\ +\sum_{\gamma\in \mathcal{E}_{\text{int}}}h_{\gamma}\|\llbracket \bar{\mathbf{p}}_{h}\cdot \mathbf{n}\rrbracket\|_{L^2(\gamma)}^2 + \sum_{\gamma\in \mathcal{E}}h_{\gamma}^{-1}\|\llbracket \bar{\mathsf{p}}_{o,h}\mathbf{n}\rrbracket\|_{L^2(\gamma)}^2.
\end{multline}
The use of \cite[Theorem 4.1]{MR4090394} yields that $\|O^{*}\mathcal{I}_{\mathcal{Y}}^{\mathcal{Y}'}(\bar{\mathsf{y}}_{o,h} - \mathsf{y}_{d}) - B^{*}\bar{\mathsf{p}}_{h}\|_{Y'}^{2} = \|\text{div }\bar{\mathbf{p}}_{h} - \bar{\mathsf{y}}_{o,h} + \mathsf{y}_{d}\|_{L^2(\Omega)}^{2} + \|\bar{\mathbf{p}}_{h} + \nabla\bar{\mathsf{p}}_{o,h}\|_{L^{2}(\Omega)^{d}}^{2} + \eta(\mathsf{p}_{h})^{2}$, with $\eta(\mathsf{p}_{h})$ defined as in \cite[Eq. (69)]{MR4090394}. Moreover, the arguments provided in the proof of \cite[Theorem 4.2]{MR4090394} reveal  
\begin{equation*}
\eta(\mathsf{p}_{h})^{2} \eqsim \sum_{\gamma\in \mathcal{E}_{\text{int}}}h_{\gamma}\|\llbracket \bar{\mathbf{p}}_{h}\cdot \mathbf{n}\rrbracket\|_{L^2(\gamma)}^2 + \sum_{\gamma\in \mathcal{E}}h_{\gamma}^{-1}\|\llbracket \bar{\mathsf{p}}_{o,h}\mathbf{n}\rrbracket\|_{L^2(\gamma)}^2.
\end{equation*}
We have thus concluded that $\eta_{adj} \eqsim \|O^{*}\mathcal{I}_{\mathcal{Y}}^{\mathcal{Y}'}(\bar{\mathsf{y}}_{o,h} - \mathsf{y}_{d}) - B^{*}\bar{\mathsf{p}}_{h}\|_{Y'}$.

Finally, we notice that the Lipschitz property of $\Pi_{[\texttt{a},\texttt{b}]}$ implies that $\|\bar{\mathsf{u}} - \tilde{\mathsf{u}}\|_{L^2(\Omega)} \leq \alpha^{-1}\|\bar{\mathsf{p}}_{o} - \bar{\mathsf{p}}_{o,h}\|_{L^2(\Omega)} \lesssim \| \bar{\mathsf{p}} - \bar{\mathsf{p}}_{h}\|_{V}$.

Summarizing the above observations, we have proved, in view of Theorems \ref{thm:reliability_estimates} and \ref{thm:efficiency_estimates}, the following result.

\begin{corollary}[a posteriori error estimation]\label{coro:a_posteriori_estimation}
Let $\eta^2:= \eta_{ct}^2 + \eta_{st}^2 + \eta_{adj}^2$. Then, we have that $\eta \eqsim  \|\bar{\mathsf{u}} - \bar{\mathsf{u}}_{h}\|_{L^2(\Omega)} +  \|\bar{\mathsf{y}} - \bar{\mathsf{y}}_{h}\|_{Y} + \|\bar{\mathsf{p}} - \bar{\mathsf{p}}_{h}\|_{V}$.
\end{corollary}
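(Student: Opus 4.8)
The plan is to assemble the corollary from pieces that are already in place: the three constituent estimators have been identified (up to $\eqsim$) with the abstract residual quantities appearing in Theorems~\ref{thm:reliability_estimates} and~\ref{thm:efficiency_estimates}, so the proof amounts to substituting these identifications and tracking the resulting constants. Concretely, I would first record the three equivalences: $\eta_{ct} = \|\tilde{\mathsf{u}} - \bar{\mathsf{u}}_{h}\|_{L^2(\Omega)}$ by definition; $\eta_{st} = \|\bar{\varepsilon}_{h}\|_{V} \eqsim \| D\bar{\mathsf{u}}_{h} - B\bar{\mathsf{y}}_{h}\|_{V'}$ by Lemma~\ref{lemma:equiv_rep}; and $\eta_{adj} \eqsim \|O^{*}\mathcal{I}_{\mathcal{Y}}^{\mathcal{Y}'}(\bar{\mathsf{y}}_{o,h} - \mathsf{y}_{d}) - B^{*}\bar{\mathsf{p}}_{h}\|_{Y'}$ by the computation preceding the statement, which rests on \cite[Theorems~4.1 and 4.2]{MR4090394}. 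I would also recall the Lipschitz bound $\|\bar{\mathsf{u}} - \tilde{\mathsf{u}}\|_{L^2(\Omega)} \lesssim \|\bar{\mathsf{p}} - \bar{\mathsf{p}}_{h}\|_{V}$ established at the end of section~\ref{sec:a_posteriori_Poisson}.

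For the reliability direction, $\|\bar{\mathsf{u}} - \bar{\mathsf{u}}_{h}\|_{L^2(\Omega)} + \|\bar{\mathsf{y}} - \bar{\mathsf{y}}_{h}\|_{Y} + \|\bar{\mathsf{p}} - \bar{\mathsf{p}}_{h}\|_{V} \lesssim \eta$, I would feed the three equivalences into the first estimate of Theorem~\ref{thm:reliability_estimates} to obtain $\|\bar{\mathsf{u}} - \bar{\mathsf{u}}_{h}\|_{L^2(\Omega)}^2 \lesssim \eta_{ct}^2 + \eta_{st}^2 + \eta_{adj}^2 = \eta^2$, and then insert this together with $\eta_{st} \eqsim \|D\bar{\mathsf{u}}_{h} - B\bar{\mathsf{y}}_{h}\|_{V'}$ and $\eta_{adj} \eqsim \|O^{*}\mathcal{I}_{\mathcal{Y}}^{\mathcal{Y}'}(\bar{\mathsf{y}}_{o,h} - \mathsf{y}_{d}) - B^{*}\bar{\mathsf{p}}_{h}\|_{Y'}$ into the second and third estimates of the same theorem to bound $\|\bar{\mathsf{y}} - \bar{\mathsf{y}}_{h}\|_{Y}$ and $\|\bar{\mathsf{p}} - \bar{\mathsf{p}}_{h}\|_{V}$ by $\eta$. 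Summing gives the reliability half of the equivalence.

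For the efficiency direction, $\eta \lesssim \|\bar{\mathsf{u}} - \bar{\mathsf{u}}_{h}\|_{L^2(\Omega)} + \|\bar{\mathsf{y}} - \bar{\mathsf{y}}_{h}\|_{Y} + \|\bar{\mathsf{p}} - \bar{\mathsf{p}}_{h}\|_{V}$, I would bound each of the three terms of $\eta$ separately. For $\eta_{ct}$, combine the triangle inequality of Theorem~\ref{thm:efficiency_estimates}, $\|\tilde{\mathsf{u}} - \bar{\mathsf{u}}_{h}\|_{L^2(\Omega)} \leq \|\tilde{\mathsf{u}} - \bar{\mathsf{u}}\|_{L^2(\Omega)} + \|\bar{\mathsf{u}} - \bar{\mathsf{u}}_{h}\|_{L^2(\Omega)}$, with the Lipschitz bound to replace $\|\tilde{\mathsf{u}} - \bar{\mathsf{u}}\|_{L^2(\Omega)}$ by $\|\bar{\mathsf{p}} - \bar{\mathsf{p}}_{h}\|_{V}$. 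For $\eta_{st}$ and $\eta_{adj}$, use the respective equivalences together with the last two estimates of Theorem~\ref{thm:efficiency_estimates}, namely $\|D\bar{\mathsf{u}}_{h} - B\bar{\mathsf{y}}_{h}\|_{V'} \lesssim \|\bar{\mathsf{y}} - \bar{\mathsf{y}}_{h}\|_{Y} + \|\bar{\mathsf{u}} - \bar{\mathsf{u}}_{h}\|_{L^2(\Omega)}$ and $\|O^{*}\mathcal{I}_{\mathcal{Y}}^{\mathcal{Y}'}(\bar{\mathsf{y}}_{o,h} - \mathsf{y}_{d}) - B^{*}\bar{\mathsf{p}}_{h}\|_{Y'} \lesssim \|\bar{\mathsf{p}} - \bar{\mathsf{p}}_{h}\|_{V} + \|\bar{\mathsf{y}} - \bar{\mathsf{y}}_{h}\|_{Y}$. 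Adding the three bounds closes the equivalence.

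Since all the analytical work has been carried out in Theorems~\ref{thm:reliability_estimates}, \ref{thm:efficiency_estimates}, and Lemma~\ref{lemma:equiv_rep}, there is essentially no genuine obstacle; the proof is pure bookkeeping. The only point that deserves a word of caution is the $\alpha$-dependence of the hidden constants: the reliability chain picks up factors of $(1+\alpha^2)/\alpha^2$ from Theorem~\ref{thm:reliability_estimates}, while the efficiency constants are $\alpha$-independent, so the statement $\eta \eqsim \|\bar{\mathsf{u}} - \bar{\mathsf{u}}_{h}\|_{L^2(\Omega)} + \|\bar{\mathsf{y}} - \bar{\mathsf{y}}_{h}\|_{Y} + \|\bar{\mathsf{p}} - \bar{\mathsf{p}}_{h}\|_{V}$ should be read with $\alpha$-dependent (but $h$-independent) constants.
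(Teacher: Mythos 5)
Your proposal is correct and follows exactly the route the paper takes: the paper proves the corollary by "summarizing the above observations," i.e., by combining the identifications $\eta_{ct}=\|\tilde{\mathsf{u}}-\bar{\mathsf{u}}_h\|_{L^2(\Omega)}$, $\eta_{st}\eqsim\|D\bar{\mathsf{u}}_h-B\bar{\mathsf{y}}_h\|_{V'}$ (Lemma~\ref{lemma:equiv_rep}), $\eta_{adj}\eqsim\|O^{*}\mathcal{I}_{\mathcal{Y}}^{\mathcal{Y}'}(O\bar{\mathsf{y}}_h-\mathsf{y}_d)-B^{*}\bar{\mathsf{p}}_h\|_{Y'}$, and the Lipschitz bound $\|\bar{\mathsf{u}}-\tilde{\mathsf{u}}\|_{L^2(\Omega)}\lesssim\|\bar{\mathsf{p}}-\bar{\mathsf{p}}_h\|_{V}$ with Theorems~\ref{thm:reliability_estimates} and~\ref{thm:efficiency_estimates}, just as you do. Your remark on the $\alpha$-dependence of the hidden constants is accurate and consistent with the paper's numerical observation that the effectivity index varies with $\alpha$.
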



\subsection{Stokes problem}\label{sec:Stokes_problem}
We study a control problem subject to physical equations for incompressible Newtonian flow. To present the state equation we introduce, for each $\mathsf{v}\in L^2(\Omega)^{d}$ and $\mathbf{v}\in L^2(\Omega)^{d\times d}$, the linear Green strain and the trace-free deviatoric part of $\mathbf{v}$ as $\epsilon(\mathsf{v}):=2^{-1}(\nabla \mathsf{v} + \nabla \mathsf{v}^{\intercal})$ and dev$(\mathbf{v}):=\mathbf{v}-d^{-1}\text{Tr}(\mathbf{v})\mathbf{I}$, respectively. Here, $\mathbf{I}$ denotes the identity matrix of order $d$. We consider the \emph{stress-velocity} formulation for our state equation:
\begin{equation}\label{eq:stress-velocity}
-\text{div } \boldsymbol\sigma = \mathsf{u} \text{ in } \Omega, \quad \text{dev}(\boldsymbol\sigma) - \epsilon(\mathsf{y}_{o}) = 0 \text{ in } \Omega, \quad \mathsf{y}_{o} = \mathbf{0} \text{ on } \partial \Omega, \quad \int_{\Omega}\text{Tr}(\boldsymbol\sigma) = 0.
\end{equation}

To present the spaces that we will use for the optimal control problem subject to \eqref{eq:stress-velocity}, we introduce $L_{\text{sym}}^{2}(\Omega)^{d\times d}:=\{ \mathbf{v}\in L^{2}(\Omega)^{d\times d}: \mathbf{v}= \mathbf{v}^{\intercal}\}$ and $H_{\text{sym}}(\text{div},\mathscr{T}):=\{ \mathbf{v}\in H(\text{div},\mathscr{T})^{d}: \mathbf{v}= \mathbf{v}^{\intercal}\}$. Hence, the spaces considered for \eqref{def:opt_cont_prob} are:
\[
U = \mathcal{Y} = L^2(\Omega)^{d}, 
\quad Y = L^2(\Omega)^{d} \times L^2_{\text{sym}}(\Omega)^{d\times d} \times H_{0}^{1/2}(\partial\mathscr{T})^{d} \times H^{-1/2}(\partial\mathscr{T})^{d}\times \mathbb{R}, 
\]
and $V = H^1(\mathscr{T})^{d} \times H_{\text{sym}}(\text{div},\mathscr{T})\times \mathbb{R}$. We equip the previous spaces with their corresponding natural norms. On the other hand, the admissible set $U_{ad}$ is defined by $U_{ad}:=\{u \in L^2(\Omega)^{d}:  \texttt{a}_{i} \leq u_{i}(x) \leq \texttt{b}_{i} \text{ a.e. in } \Omega \text{ for all } i=1,\ldots,d.\}$, where the vectors $\texttt{a},\texttt{b} \in \mathbb{R}^{d}$ satisfy $\texttt{a}_{i} < \texttt{b}_{i}$ for all $i\in \{1,\ldots,d\}$. 

We define the bilinear form $b:Y\times V\to \mathbb{R}$ as follows \cite[Eq. (3.12)]{MR3215064}:
\begin{multline}\label{def:bilinear_Stokes}
b(\mathsf{y},v) := (\text{dev}(\boldsymbol\sigma), \mathbf{v})_{\mathscr{T}} + (\mathsf{y}_{o}, \text{div }\mathbf{v})_{\mathscr{T}} - \langle \hat{\mathsf{y}}_{o}, \mathbf{v}\cdot \mathbf{n} \rangle_{\partial\mathscr{T}} + (\alpha,\text{Tr}(\mathbf{v}))_{\mathscr{T}} \\ +(\boldsymbol\sigma, \epsilon(\mathsf{v}))_{\mathscr{T}} - \langle \hat \sigma, \mathsf{v}\rangle_{\partial\mathscr{T}} + (\text{Tr}(\boldsymbol\sigma),\beta)_{\mathscr{T}},
\end{multline}
where $\mathsf{y}=(\mathsf{y}_{o},\boldsymbol\sigma,\hat{\mathsf{y}}_{o},\hat{\sigma},\alpha)\in Y$, $v=(\mathsf{v},\mathbf{v},\beta)\in V$, $(\cdot,\cdot)_{\mathscr{T}}:=\sum_{T\in\mathscr{T}}(\cdot,\cdot)_{L^2(T)^{d}}$, and $\langle \cdot,\cdot\rangle_{\partial\mathscr{T}}:=\sum_{T\in\mathscr{T}}\langle \cdot,\cdot\rangle_{\partial T}$.

The observation operator $O$ is defined by $O\mathsf{y}=\mathsf{y}_{o}$ and the mapping $D$ is defined as the continuous embedding $L^2(\Omega)^d\hookrightarrow L^2(\Omega)^d\times \{0\} \hookrightarrow  V'$. Consequently, the state equation reads: $b(\mathsf{y},v) = (\mathsf{u},\mathsf{v})_{L^2(\Omega)}$ for all $v=(\mathsf{v},\mathbf{v})\in V$.

Theorem 3.7 from \cite{MR3215064} guarantees that the bilinear form \eqref{def:bilinear_Stokes} satisfies the inf-sup condition \eqref{eq:inf_sup_b}. Hence, Theorems \ref{thm:existence} and \ref{thm:1st_order_opt_cond} prove the existence and uniqueness of an optimal solution pair $(\bar{\mathsf{y}},\bar{\mathsf{u}})\in Y\times U_{ad}$ for problem \eqref{def:opt_cont_prob} satisfying the first order optimality condition \eqref{eq:var_ineq}. We can rewrite such an optimality condition as $(\bar{\mathsf{p}}_{o} + \alpha\bar{\mathsf{u}}, \mathsf{u} - \bar{\mathsf{u}})_{L^2(\Omega)^{d}} \geq 0$ for all $\mathsf{u}\in U_{ad}$, where $\bar{\mathsf{p}} :=(\bar{\mathsf{p}}_{o},\bar{\mathbf{p}},\bar{\gamma}) \in V$ and $\bar{\lambda} := (\bar{\zeta},\bar{\boldsymbol\mu},\overline{\hat{\zeta}},\overline{\hat{\mu}},\bar{\delta})\in Y$ solve the adjoint equation \eqref{eq:adj_eq_mixed}. Moreover, we have a characterization for $\bar{\mathsf{u}}$:
\begin{equation*}
\bar{\mathsf{u}}_{i}(x) = \Pi_{[\texttt{a},\texttt{b}]}(-\alpha^{-1}\bar{\mathsf{p}}_{o,i}(x)) \quad \text{for a.e. } x\in\Omega, ~ i \in \{1, \ldots,d \}.
\end{equation*}
Notice that $\bar{\mathsf{u}} \in H^{1}(\mathscr{T})^{d}$  \cite[Theorem A.1]{MR1786735}.


\subsubsection{Discrete approximation}

The next conforming discrete spaces are considered:
\begin{equation*}
Y_{h}:=\mathcal{P}^{k}(\mathscr{T})^{d}\times \mathcal{P}^{k}(\mathscr{T})^{d\times d}\times \mathcal{P}^{k+1}_{c,0}(\partial\mathscr{T})^{d}\times \mathcal{P}^{k}(\partial\mathscr{T})^{d}, \quad V_{h}:=\mathcal{P}^{k_{1}}(\mathscr{T})^{d}\times\mathcal{P}^{k_{2}}(\mathscr{T})^{d \times d},
\end{equation*}
$\mathcal{Y}_{h}:=\mathcal{P}^{k}(\mathscr{T})^{d}$, and $U_{h}:=\mathcal{P}^{k}(\mathscr{T})^{d}$, with $k\in\mathbb{N}_{0}$ and $k_{1},k_{2}\in\mathbb{N}$ satisfying $k_{1} \geq k + d + 1$ and $k_{2} \geq k + 2$.  A Fortin operator for the previous choice of spaces exists. Writing $\Pi v = (\Pi^\nabla \mathsf{v},\Pi^{\mathrm{div},\mathrm{sym}}\mathbf{v})$ for $v = (\mathsf{v},\mathbf{v})\in V$. Here, $\Pi^\nabla$ denotes the operator from~\cite[Section~3.1]{2023arXiv230113021F} applied componentwise and $\Pi^{\mathrm{div},\mathrm{sym}}$ is the operator from~\cite[Lemma~4.1]{MR3143683}. We stress that $\Pi$ satisfies the statements from Defintion~\ref{def:fortin_op} and identity~\eqref{eq:Fortin_op_control}. We thus have, in view of Theorem \ref{thm:existence_discrete}, that there exists a unique discrete optimal solution $(\bar{\mathsf{y}}_{h},\bar{\mathsf{u}}_{h})\in Y_{h}\times U_{ad,h}$ for problem \eqref{def:discrete_opt_cont_prob_i}--\eqref{def:discrete_opt_cont_prob_ii}. Here, $\bar{\mathsf{y}}_{h}=(\bar{\mathbf{y}}_{h},\bar{\boldsymbol\sigma}_{h},\overline{\hat{\mathbf{y}}}_{h},\overline{\hat{\sigma}}_{h},\bar{\alpha}_{h})$. Additionally, $\bar{\mathsf{u}}_{h}$ satisfies the discrete variational inequality $(\bar{\mathsf{p}}_{o,h} + \alpha\bar{\mathsf{u}}_{h}, \mathsf{u}_{h} - \bar{\mathsf{u}}_{h})_{L^2(\Omega)^{d}} \geq 0$ for all $\mathsf{u}_{h}\in U_{ad,h}$, with $\bar{\mathsf{p}}_{h} :=(\bar{\mathsf{p}}_{o,h},\bar{\mathbf{p}}_{h},\bar{\gamma}_{h}) \in V_{h}$ and 
$\bar{\lambda}_{h} := (\bar{\zeta}_{h},\bar{\boldsymbol\mu}_{h},\overline{\hat{\zeta}}_{h},\overline{\hat{\mu}}_{h},\bar{\delta}_{h})\in Y_{h}$.


\subsubsection{A priori error estimates}

Assume that $\Omega\subset\mathbb{R}^{d}$ ($d=2,3)$ is a convex polygonal/polyhedral domain and that the desired state $\mathsf{y}_{d}\in H^1(\mathscr{T})^{d}$. We proceed as in the Poisson example and study regularity properties of the optimal solution.

We start with the term $\inf_{w_{h}\in Y_{h}}\|\bar{\mathsf{y}} - w_{h}\|_{Y}^2$ in \eqref{eq:sharp_estimate_control}; cf. estimate \eqref{eq:estimate_control_apriori}. Since $\bar{\mathsf{y}}$ solves the Stokes problem with a source term in $L^2(\Omega)$, we use \cite[Theorem 2]{MR0404849} and \cite{MR775683} when $d = 2$, and \cite{MR977489} and \cite{MR1301452} when $d=3$ to obtain that $\bar{\mathsf{y}}_{o}\in H_0^1(\Omega)^{d}\cap H^2(\Omega)^{d}$ and $\bar{\boldsymbol\sigma}\in H^1(\Omega)^{d\times d}$. The use of \cite[Theorem 6]{MR3977484} yields that $\inf_{w_{h}\in Y_{h}}\|\bar{\mathsf{y}} - w_{h}\|_{Y} \lesssim h$.

To bound the term $\inf_{v_{h}\in V_{h}}\|\bar{\mathsf{p}} - v_{h}\|_{V}^2$, we first note that $\bar{\mathsf{p}} = (\bar{\mathsf{p}}_{o},\bar{\mathbf{p}},\bar{\gamma}) \in V$ solves the Stokes equation with $\bar{\mathsf{y}}_{o} - \mathsf{y}_{d} \in L^2(\Omega)^{d}$ as source term. As a consequence of the convexity of $\Omega$, we conclude that $\bar{\mathsf{p}}_{o}\in H_0^1(\Omega)^{d} \cap H^2(\Omega)^{d}$ and $\bar{\mathbf{p}}\in H^1(\Omega)^{d\times d}$. Thus, using that $\mathsf{y}_{d}\in H^1(\mathscr{T})^{d}$, it follows that  div $\bar{\mathbf{p}}=\bar{\mathsf{y}}-\mathsf{y}_{d}\in H^{1}(\mathscr{T})^{d}$. Consequently, standard estimates for projection operators \cite[Section 3.1 and eq. (3.1)]{MR3977484} allow us to prove that $\inf_{v_{h}\in V_{h}}\|\bar{\mathsf{p}} - v_{h}\|_{V}\lesssim h$. 

Let us estimate $\inf_{w_{h}\in Y_{h}}\|\bar{\lambda} - w_{h}\|_{Y}^2$ in \eqref{eq:sharp_estimate_control}. To prove regularity properties for $\bar{\lambda} = (\bar{\zeta},\bar{\boldsymbol\mu},\overline{\hat{\zeta}},\overline{\hat{\mu}},\bar{\delta})$, we use the arguments elaborated in \cite[Proposition 3.3]{MR4090394} to obtain 
\begin{equation*}
\bar{\zeta} =  \bar{\mathsf{y}}_{o} - \mathsf{y}_{d} + \mathsf{e}, 
\quad \bar{\boldsymbol\mu} = \bar{\mathbf{p}} + \mathbf{e}, 
\quad \bar{\hat{\zeta}} = \mathsf{e}, 
\quad \bar{\hat{\mu}} =   \bar{\mathbf{p}}\cdot\mathbf{n} -  \nabla\bar{\mathsf{p}}_{o}\cdot\mathbf{n} + \mathbf{e}\cdot\mathbf{n},
\end{equation*}  
and $\bar{\delta} = d^{-1}\int_{\Omega}\text{Tr}(\bar{\mathbf{p}})+\int_{\Omega}\text{Tr}(\bar{\mathbf{e}})$. Here, $\mathsf{e}\in H_0^1(\Omega)^{d}$ satisfies $-\text{div } \mathbf{e} = \bar{\mathsf{p}}_o - \Delta \bar{\mathsf{p}}_{o} + \bar{\mathsf{y}}_{o} - \mathsf{y}_{d}$ and $\text{dev}(\mathbf{e}) - \epsilon(\mathsf{e}) = 0$. Moreover, since $\Omega$ is convex, we have that $\mathsf{e}\in H_0^1(\Omega)^{d}\cap H^2(\Omega)^{d}$ and that $\bar{\boldsymbol\mu}\in H^1(\Omega)^{d\times d}$. Additionally, the fact that $\mathsf{y}_{d}\in H^1(\mathscr{T})$ implies that $\bar{\zeta}\in H^1(\mathscr{T})$. Therefore, using the previous regularities and the arguments provided in \cite[Theorem 6]{MR3977484} to bound the norm $\|\cdot\|_{-1/2,\partial\mathscr{T}}$, we conclude that $\inf_{w_{h}\in Y_{h}}\|\bar{\lambda} - w_{h}\|_{Y}\lesssim h$.

We estimate the terms $\|(1-\pi_{U_{h}})\bar{\mathsf{p}}_{o,h}\|_{L^2(\Omega)^{d}}^{2}$ and $\|(1-\pi_{U_{h}})\bar{\mathsf{u}}\|_{L^2(\Omega)^{d}}^2$ in \eqref{eq:sharp_estimate_control} by using that $\bar{\mathsf{p}}_{o,h},\bar{\mathsf{u}} \in H^{1}(\mathscr{T})^{d}$ and approximation properties of the projection operator.

Hence, by virtue of Remark \ref{remark:estimation_eu} and Theorem \ref{thm:a_priori_estimates}, we conclude the a priori error bound $\|\bar{\mathsf{u}} - \bar{\mathsf{u}}_{h}\|_{L^2(\Omega)^{d}} + \|\bar{\mathsf{y}} - \bar{\mathsf{y}}_{h}\|_{Y} + \|\bar{\mathsf{p}} - \bar{\mathsf{p}}_{h}\|_{V} \lesssim h$.


\subsubsection{A posteriori error estimates}\label{sec:a_posteriori_Stokes}
For the discretization of the control variable, we define the error estimator $\eta_{ct}:=\|\tilde{\mathsf{u}} - \bar{\mathsf{u}}_{h}\|_{L^2(\Omega)^{d}}$. where $\tilde{\mathsf{u}}:= \Pi_{[\texttt{a},\texttt{b}]}(-\alpha^{-1}\bar{\mathsf{p}}_{o,h})$ is such that it satisfies the variational inequality \eqref{eq:ineq_tilde_u} \cite[Lemma 2.26]{MR2583281}. We note that the results of Theorems \ref{thm:reliability_estimates} and \ref{thm:efficiency_estimates} hold. 

We use $\eta_{st}=\|\bar{\varepsilon}_{h}\|_{V}$ as error estimator associated to the discrete state equation \eqref{eq:equivalent_discrete_st_eq}. We recall that $(\bar{\mathsf{y}}_{h},\bar{\varepsilon}_{h})\in Y_{h}\times V_{h}$ solves the discrete state equation \eqref{eq:equivalent_discrete_st_eq} with $\mathsf{u}_{h}$ replaced by $\bar{\mathsf{u}}_{h}$. We note that, since the Fortin operator $\Pi$ satisfies \eqref{eq:Fortin_op_control}, then Lemma \ref{lemma:equiv_rep} holds within the considered setting. Consequently, $\eta_{st} \eqsim \|D\bar{\mathsf{u}}_{h} - B\bar{\mathsf{y}}_{h}\|_{V'}$.

Associated to the discrete adjoint equation, we define the error estimator
\begin{multline*}
\eta_{adj}^2:=\|\text{div }\bar{\mathbf{p}}_{h} - \bar{\mathsf{y}}_{o,h} + \mathsf{y}_{d}\|_{L^2(\mathscr{T})^{d}}^2 + \|\text{dev}(\bar{\mathbf{p}}_{h}) + \epsilon(\bar{\mathsf{p}}_{o,h})\|_{L^{2}(\mathscr{T})^{d\times d}}^{2} \\ + \|\text{div }\bar{\mathsf{p}}_{o,h}\|_{L^2(\mathscr{T})}^{2} +\sum_{\gamma\in \mathcal{E}_{\text{int}}}h_{\gamma}\|\llbracket \bar{\mathbf{p}}_{h}\cdot \mathbf{n}\rrbracket\|_{L^2(\gamma)^{d}}^2 + \sum_{\gamma\in \mathcal{E}}h_{\gamma}^{-1}\|\llbracket \bar{\mathsf{p}}_{o,h}\mathbf{n}\rrbracket\|_{L^2(\gamma)^{d}}^2.
\end{multline*}
An application of \cite[Theorems 4.1 and 4.2]{MR4090394} yields $\eta_{adj} \eqsim \|O^{*}\mathcal{I}_{\mathcal{Y}}^{\mathcal{Y}'}(\bar{\mathsf{y}}_{o,h} - \mathsf{y}_{d}) - B^{*}\bar{\mathsf{p}}_{h}\|_{Y'}$.

The Lipschitz property of $\Pi_{[\texttt{a},\texttt{b}]}$ implies that $\|\bar{\mathsf{u}} - \tilde{\mathsf{u}}\|_{L^2(\Omega)^{d}} \lesssim \| \bar{\mathsf{p}} - \bar{\mathsf{p}}_{h}\|_{V}$.

The previous estimates lead us to conclude that $\eta \eqsim  \|\bar{\mathsf{u}} - \bar{\mathsf{u}}_{h}\|_{L^2(\Omega)^{d}} +  \|\bar{\mathsf{y}} - \bar{\mathsf{y}}_{h}\|_{Y} + \|\bar{\mathsf{p}} - \bar{\mathsf{p}}_{h}\|_{V}$, where $\eta^2:= \eta_{ct}^2 + \eta_{st}^2 + \eta_{adj}^2$ (cf. Corollary \ref{coro:a_posteriori_estimation}).

\section{Numerical examples}\label{sec:num_examples} 
In this section we present numerical examples in 2D that support our theoretical findings for the two model problems. For all examples we consider lowest-order discretizations, i.e., we choose $k=0$, $k_1=3$ and $k_2=2$, and employ the active set strategy from~\cite{KKT03} to solve the discrete variational inequalities.
In order to simplify the construction of exact optimal solutions, we have incorporated an extra forcing term $f\in L^2(\Omega)$ in the state equation. The right-hand side of the state equation reads as follows: $(f  + \bar{\mathsf{u}}, \mathsf{v})_{L^2(\Omega)}$.
 
\subsection{Problem in convex domain subject to Poisson equation}\label{sec:num_examples:poisson}
We consider the problem setup of Section~\ref{sec:Poisson_problem} with manufactured solution
\begin{align*}
  \bar{\mathsf{y}}_{o}(x,y) = \sin(\pi x)\sin(\pi y), \quad \bar{\mathsf{p}}_{o}(x,y) = 16 x(1-x)y(1-y) \quad \text{for } (x,y)\in \Omega := (0,1)^2.
\end{align*}
Set $\texttt{b}= - \texttt{a} =10$, $\alpha=10^{-2}$, and compute for $(x,y)\in \Omega$: $\bar{\mathsf{u}}(x,y) = \Pi_{[\texttt{a},\texttt{b}]}(-\alpha^{-1}\bar{\mathsf{p}}_{o}(x))$,
\begin{equation*}
f(x,y)= -\Delta \bar{\mathsf{y}}_{o}(x,y) -\bar{\mathsf{u}}(x,y),  \quad   \mathsf{y}_d(x,y) = \Delta \bar{\mathsf{p}}_{o}(x,y) + \bar{\mathsf{y}}_{o}(x,y).
\end{equation*}

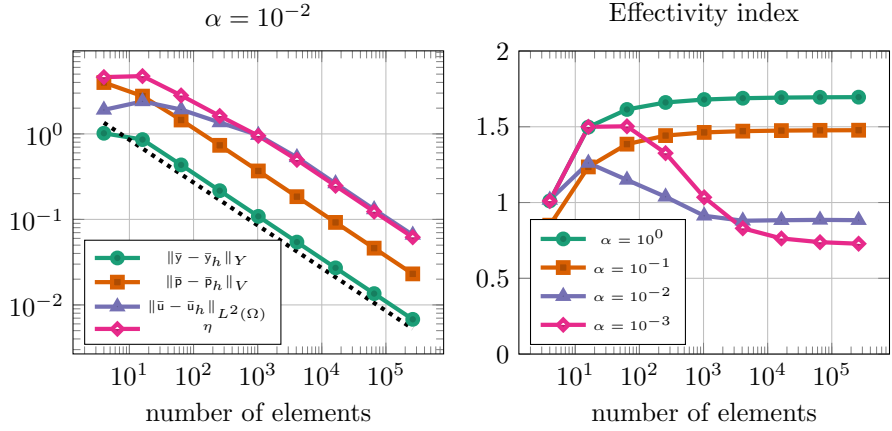
\begin{figure}
  \begin{tikzpicture}
  \begin{groupplot}[group style={group size= 2 by 1},width=0.5\textwidth,cycle list/Dark2-6,
                      cycle multiindex* list={
                          mark list*\nextlist
                          Dark2-6\nextlist},
                      every axis plot/.append style={ultra thick},
                      grid=major,
                      xlabel={number of elements},]
         \nextgroupplot[title={$\alpha=10^{-2}$},ymode=log,xmode=log,
           legend entries={\tiny{$\|\bar{\mathsf{y}}-\bar{\mathsf{y}}_h\|_Y$},\tiny{$\|\bar{\mathsf{p}}-\bar{\mathsf{p}}_h\|_V$},\tiny{$\|\bar{\mathsf{u}}-\bar{\mathsf{u}}_h\|_{L^2(\Omega)}$},\tiny $\eta$},
                      legend pos=south west]
                \addplot table [x=nE,y=errY] {data/ExamplePoissonSmooth_1e-02.dat};
                \addplot table [x=nE,y=errP] {data/ExamplePoissonSmooth_1e-02.dat};
                \addplot table [x=nE,y=errU] {data/ExamplePoissonSmooth_1e-02.dat};
                \addplot table [x=nE,y=estTotal] {data/ExamplePoissonSmooth_1e-02.dat};
                \addplot [black,dotted,mark=none] table [x=nE,y expr={2.7*sqrt(\thisrowno{0})^(-1)}] {data/ExamplePoissonSmooth_1e-02.dat};
       \nextgroupplot[title={Effectivity index},xmode=log,ymin=0,ymax=2,
         legend entries={\tiny $\alpha=10^0$,\tiny $\alpha=10^{-1}$,\tiny $\alpha=10^{-2}$,\tiny $\alpha=10^{-3}$},
                      legend pos=south west]
                \addplot table [x=nE,y=effInd] {data/ExamplePoissonSmooth_1e+00.dat};
                \addplot table [x=nE,y=effInd] {data/ExamplePoissonSmooth_1e-01.dat};
                \addplot table [x=nE,y=effInd] {data/ExamplePoissonSmooth_1e-02.dat};
                \addplot table [x=nE,y=effInd] {data/ExamplePoissonSmooth_1e-03.dat};
    \end{groupplot}
\end{tikzpicture}
  \caption{Errors and estimator for $\alpha = 10^{-2}$ (left) and effectivity indices (right) for the problem from Section~\ref{sec:num_examples:poisson}. The black dotted line indicates $\mathcal{O}( (\#\mathscr{T})^{-1/2})$.}\label{fig:PoissonSmooth}
\end{figure}

Figure~\ref{fig:PoissonSmooth} shows the error in the field variables in the state, the error of the adjoint state, and the $L^2(\Omega)$-error of the control compared to the total error estimator $\eta$. Given that the exact solutions are smooth our a priori error analysis predicts $\mathcal{O}(h)$ convergence which is also seen in our experiment. 
Furthermore, the right plot of Figure~\ref{fig:PoissonSmooth} shows the effectivity index 
\begin{equation*}
  \frac{\eta}{\sqrt{\|(\bar{\mathsf{y}}_o-\bar{\mathsf{y}}_{o,h},\bar{\boldsymbol\sigma} - \bar{\boldsymbol\sigma}_h)\|_{L^2(\Omega)}^2+\|\bar{\mathsf{p}} - \bar{\mathsf{p}}_h\|_V^2+\|\bar{\mathsf{u}} - \bar{\mathsf{u}}_h\|_{L^2(\Omega)}^2}}
\end{equation*}
for $\alpha =10^{-j}$, $j=0,1,2,3$. It can be observed that the effectivity index depends on $\alpha$ indicating that the error estimator is not uniformly equivalent to the total error. However, we note that the effectivity index ranges between $0.7$ and $1.7$.

\subsection{Problem in non-convex domain subject to Poisson equation}\label{sec:num_examples:Lshape}
We consider the problem setup of Section~\ref{sec:Poisson_problem} and data
\begin{align*}
\mathsf{y}_d(x,y) &= 1 \quad\text{for }(x,y) \in (-1,1)^2\setminus [-1,0]^2, 
  \quad \texttt{a} = 0.1, ~ \texttt{b} = 0.12,~ \alpha = 1.
\end{align*}
This example is taken from~\cite[Section~4.4]{LSQoptimal}.
Here, we do not know the exact solution in explicit form but note that reduced regularities are expected due the reentrant corner of the L-shaped domain.

\begin{figure}
\begin{center}\begin{tikzpicture}
\begin{loglogaxis}[
    width=0.49\textwidth,
cycle list/Dark2-6,
cycle multiindex* list={
mark list*\nextlist
Dark2-6\nextlist},
every axis plot/.append style={ultra thick},
xlabel={number of elements},
grid=major,
legend entries={\small $\eta$ unif.,\small $\eta$ adap.},
legend pos=south west,
]

\addplot table [x=nE,y=estTotal] {data/ExamplePoissonSingUnif.dat};
\addplot table [x=nE,y=estTotal] {data/ExamplePoissonSingAdap.dat};
\addplot [black,dotted,mark=none] table [x=nE,y expr={4*sqrt(\thisrowno{0})^(-1)}] {data/ExamplePoissonSingAdap.dat};
\end{loglogaxis}
\end{tikzpicture}

\begin{tikzpicture}
\begin{axis}[hide axis,
width=0.49\textwidth,
    axis equal,
]

\addplot[patch,color=white,
faceted color = black, line width = 0.2pt,
patch table ={data/ele5.dat}] file{data/coo5.dat};
\end{axis}
\end{tikzpicture}
\begin{tikzpicture}
\begin{axis}[hide axis,
width=0.49\textwidth,
    axis equal,
]

\addplot[patch,color=white,
faceted color = black, line width = 0.2pt,
patch table ={data/ele6.dat}] file{data/coo6.dat};
\end{axis}
\end{tikzpicture}
\begin{tikzpicture}
\begin{axis}[hide axis,
width=0.49\textwidth,
    axis equal,
]

\addplot[patch,color=white,
faceted color = black, line width = 0.2pt,
patch table ={data/ele7.dat}] file{data/coo7.dat};
\end{axis}
\end{tikzpicture}
\end{center}
\caption{Estimators for uniform and adaptive mesh-refinement as well as meshes generated by the adaptive algorithm for the problem from Section~\ref{sec:num_examples:Lshape}. The black dotted line indicates $\mathcal{O}( (\#\mathscr{T})^{-1/2})$.}\label{fig:PoissonSing}
\end{figure}
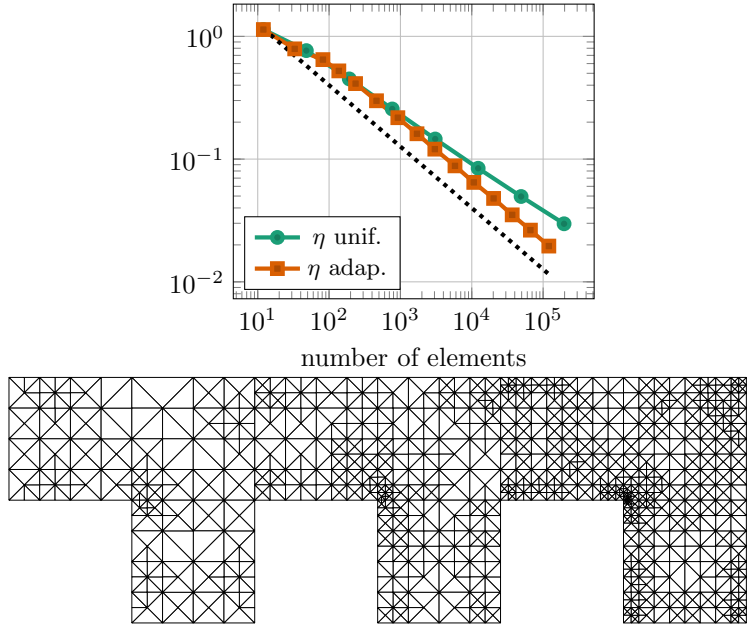

Figure~\ref{fig:PoissonSing} compares the estimator for uniform and adaptive mesh refinement. We employed a standard adaptive loop where we used the bulk criterion
\begin{align*}
  \frac12 \eta^2 \leq \sum_{T\in\mathcal{M}} \eta(K)^2
\end{align*}
to mark a (minimal) set $\mathcal{M}$ of elements for refinement.
It can be observed that uniform mesh refinements lead to reduced convergence rates whereas the adaptive algorithm recovers $\mathcal{O}( (\#\mathscr{T})^{-1/2})$ convergence.

\subsection{Problem in convex domain subject to Stokes equation}\label{sec:num_examples:stokes}
We consider the problem setup of Section~\ref{sec:Stokes_problem} with manufactured solution
\begin{alignat*}{2}
  \bar{\mathsf{y}}_{o}(x,y) &= \operatorname{curl}\big( x(1-x)y(1-y) \big)^2,  &\bar{\mathsf{p}}_{o}(x,y) \,=\,& \operatorname{curl}\big( \sin(\pi x)\sin(\pi y)\big)^2, \\
  \boldsymbol\sigma(x,y) &= \epsilon(\bar{\mathsf{y}}_{o})(x,y) - (x-\tfrac12)\begin{pmatrix} 1 & 0 \\ 0 & 1\end{pmatrix}, 
  &\bar{\mathbf{p}}(x,y) =\,& \epsilon(\bar{\mathsf{p}}_{o})(x,y)
\end{alignat*}
for $(x,y)\in \Omega := (0,1)^2$, set $\texttt{a}=-10$, $\texttt{b}=10$, and compute the control and other quantities thereof.

\begin{figure}
\begin{tikzpicture}
  \begin{groupplot}[group style={group size= 2 by 1},width=0.5\textwidth,cycle list/Dark2-6,
                      cycle multiindex* list={
                          mark list*\nextlist
                          Dark2-6\nextlist},
                      every axis plot/.append style={ultra thick},
                      grid=major,
                      xlabel={number of elements},]
                      \nextgroupplot[title={$\alpha=10^{-2}$},ymode=log,xmode=log,ymin=10^(-2),ymax=10^(3),
           legend entries={\tiny{$\|\bar{\mathsf{y}}-\bar{\mathsf{y}}_h\|_Y$},\tiny{$\|\bar{\mathsf{p}}-\bar{\mathsf{p}}_h\|_V$},\tiny{$\|\bar{\mathsf{u}}-\bar{\mathsf{u}}_h\|_{L^2(\Omega)^{d}}$},\tiny $\eta$},
                      legend pos=north east]
                \addplot table [x=nE,y=errY] {data/ExampleStokesSmooth_1e-02.dat};
                \addplot table [x=nE,y=errP] {data/ExampleStokesSmooth_1e-02.dat};
                \addplot table [x=nE,y=errU] {data/ExampleStokesSmooth_1e-02.dat};
                \addplot table [x=nE,y=estTotal] {data/ExampleStokesSmooth_1e-02.dat};
                \addplot [black,dotted,mark=none] table [x=nE,y expr={50*sqrt(\thisrowno{0})^(-1)}] {data/ExampleStokesSmooth_1e-02.dat};
       \nextgroupplot[title={Effectivity index},xmode=log,ymin=0,ymax=2,
         legend entries={\tiny $\alpha=10^0$,\tiny $\alpha=10^{-1}$,\tiny $\alpha=10^{-2}$,\tiny $\alpha=10^{-3}$},
                      legend pos=south west]
                \addplot table [x=nE,y=effInd] {data/ExampleStokesSmooth_1e+00.dat};
                \addplot table [x=nE,y=effInd] {data/ExampleStokesSmooth_1e-01.dat};
                \addplot table [x=nE,y=effInd] {data/ExampleStokesSmooth_1e-02.dat};
                \addplot table [x=nE,y=effInd] {data/ExampleStokesSmooth_1e-03.dat};
    \end{groupplot}
\end{tikzpicture}
\caption{Errors and estimator for $\alpha = 10^{-2}$ (left) and effectivity indices (right) for the problem from Section~\ref{sec:num_examples:stokes}. The black dotted line indicates $\mathcal{O}( (\#\mathscr{T})^{-1/2})$.}\label{fig:Stokes}
\end{figure}

Figure~\ref{fig:Stokes} shows the error in the field variables in the state, the error of the adjoint state, and the $L^2(\Omega)$-error of the control compared to the total error estimator $\eta$. Given that the exact solutions are smooth our a priori error analysis predicts $\mathcal{O}(h)$ convergence which is also seen in our experiment. 
Furthermore, the right plot of Figure~\ref{fig:Stokes} shows the effectivity index (defined as in Section~\ref{sec:num_examples:poisson}) for $\alpha =10^{-j}$, $j=0,1,2,3$.


\bibliographystyle{siamplain}
\bibliography{biblio}

\end{document}